\providecommand{\U}[1]{\protect\rule{.1in}{.1in}}
\newtheorem{theorem}{Theorem}[section]
\newtheorem{corollary}[theorem]{Corollary}
\newtheorem{definition}[theorem]{Definition}
\newtheorem{lemma}[theorem]{Lemma}
\newtheorem{notation}[theorem]{Notation}
\newtheorem{proposition}[theorem]{Proposition}
\newtheorem{remark}[theorem]{Remark}
\newenvironment{proof}[1][Proof]{\textbf{#1.} }{\hfill\rule{0.5em}{0.5em}}
{\catcode`\@=11\global\let\AddToReset=\@addtoreset
\AddToReset{equation}{section}

\AddToReset{theorem}{section}

\begin{document}

\title{Isolated initial singularities for the viscous Hamilton-Jacobi equation}
\author{Marie Fran\c{c}oise BIDAUT-VERON\thanks{Laboratoire de Math\'{e}matiques et
Physique Th\'{e}orique, CNRS UMR 7350, Facult\'{e} des Sciences, 37200 Tours
France. E-mail address:veronmf@univ-tours.fr}
\and Nguyen Anh DAO\thanks{Laboratoire de Math\'{e}matiques et Physique
Th\'{e}orique, CNRS UMR 7350, Facult\'{e} des Sciences, 37200 Tours France.
E-mail address:Anh.Nguyen@lmpt.univ-tours.fr}}
\date{}
\maketitle

\begin{abstract}
Here we study the nonnegative solutions of the viscous Hamilton-Jacobi
equation
\[
u_{t}-\Delta u+|\nabla u|^{q}=0
\]
in $Q_{\Omega,T}=\Omega\times\left(  0,T\right)  ,$ where $q>1,T\in\left(
0,\infty\right]  ,$ and $\Omega$ is a smooth bounded domain of $\mathbb{R}%
^{N}$ containing $0,$ or $\Omega=\mathbb{R}^{N}.$ We consider solutions with a
possible singularity at point $(x,t)=(0,0).$ We show that if $q\geq q_{\ast
}=(N+2)/(N+1)$ the singularity is removable. For $1<q<q_{\ast}$, we prove the
uniqueness of a very singular solution without condition as $|x|\rightarrow
\infty$; we also show the existence and uniqueness of a very singular solution
of the Dirichlet problem in $Q_{\Omega,\infty},$ when $\Omega$ is bounded. We
give a complete description of the solutions in each case.\bigskip

\textbf{Keywords }Viscous Hamilton-Jacobi equation; regularity; initial
isolated singularity; removability; very singular solution\bigskip

\textbf{A.M.S. Subject Classification }35K15, 35K55, 35B33, 35B65, 35D30

\end{abstract}

.\pagebreak

\section{Introduction}

Let $\Omega$ be a smooth bounded domain of $\mathbb{R}^{N}$ containing $0,$ or
$\Omega=\mathbb{R}^{N}$, and $\Omega_{0}=\Omega\backslash\{0\}.$ Here we
consider the nonnegative solutions of the viscous parabolic Hamilton-Jacobi
equation%
\begin{equation}
u_{t}-\Delta u+|\nabla u|^{q}=0 \label{un}%
\end{equation}
in $Q_{\Omega,T}=\Omega\times\left(  0,T\right)  ,$ where $q>1$, with a
possible singularity at point $(x,t)=(0,0),$ in the sense:
\begin{equation}
\lim_{t\rightarrow0}\int_{\Omega}u(.,t)\varphi dx=0,\qquad\forall\varphi\in
C_{c}(\Omega_{0}), \label{R}%
\end{equation}
which means \textit{formally} that $u(x,0)=0$ for $x\neq0.$ \medskip\ 

Such a problem was first considered for the semi-linear equation with a lower
term or order $0:$
\begin{equation}
u_{t}-\Delta u+|u|^{q-1}u=0\hspace{0.5cm}\text{in }Q_{\Omega,T}, \label{bf}%
\end{equation}
with $q>1.$ In a well-known article of Brezis and Friedman \cite{BrFr}, it was
shown that the problem admits a critical value $q_{c}=(N+2)/N$. For any
$q<q_{c},$ and any bounded Radon measure $u_{0}\in\mathcal{M}_{b}(\Omega),$
there exists a unique solution of (\ref{bf}) with Dirichlet conditions on
$\partial\Omega$ with initial data $u_{0},$ in the weak $^{\ast}$ sense:
\begin{equation}
\lim_{t\rightarrow0}\int_{\Omega}u(.,t)\varphi dx=\int_{\Omega}\varphi
du_{0},\qquad\forall\varphi\in C_{c}(\Omega). \label{me}%
\end{equation}
Moreover, from \cite{BrPT} and \cite{KP},there exists a \textit{very singular
solution } in $\mathbb{R}^{N}$, satisfying
\begin{equation}
\lim_{t\rightarrow0}\int_{B_{r}}u(.,t)dx=\infty,\qquad\forall\text{ }%
B_{r}\subset\Omega,
\end{equation}
and it is the limit as $k\rightarrow\infty$ of the solutions with initial data
$k\delta_{0},$ where $\delta_{0}$ is the Dirac mass at $0$\noindent; its
uniqueness, obtained in \cite{Os}, is also a consequence of the general
results of \cite{MaVe}. For any $q\geqq q_{c},$ such solutions do not exist,
and the singularity is \textit{removable}, in other words any solution of
(\ref{bf}), (\ref{R}) satisfies $u\in C^{2,1}\left(  \Omega\times\left[
0,T\right)  \right)  $ and $u(x,0)=0$ in $\Omega,$ see again \cite{BrFr}%
.\medskip

The problem was extended in various directions, where the Laplacian is
replaced by the porous medium operator $\Delta(\left\vert u\right\vert
^{m-1}u)$, see among them \cite{PT}, \cite{KPVa}, \cite{KVe}, \cite{Kw}%
,\cite{Le1}, \cite{Le2}, or the $p$-Laplacian $\Delta_{p}u$, see for example
\cite{KP2}, \cite{PW}, \cite{KVa}. \medskip

Concerning equation (\ref{un}), up to now, the description was not yet
complete. Here another critical value is involved:
\[
q_{\ast}=\frac{N+2}{N+1}.
\]

In the case $\Omega=\mathbb{R}^{N}$, we define a very singular solution
(called VSS) in $Q_{\mathbb{R}^{N},\infty}$ as any function $u\in L_{loc}%
^{1}(Q_{\mathbb{R}^{N},\infty}),$ such that $|\nabla u|\in L_{loc}%
^{q}(Q_{\mathbb{R}^{N},\infty}),$ satisfying equation (\ref{un}) in
$\mathcal{D}^{\prime}(Q_{\mathbb{R}^{N},\infty}),$ and conditions
\begin{equation}
\lim_{t\rightarrow0}\int_{\mathbb{R}^{N}}u(.,t)\varphi dx=0,\qquad
\forall\varphi\in C_{c}(\mathbb{R}^{N}\backslash\left\{  0\right\}  ).
\label{RR}%
\end{equation}%
\begin{equation}
\lim_{t\rightarrow0}\int_{B_{r}}u(.,t)dx=\infty,\qquad\forall r>0. \label{VS}%
\end{equation}
For $q\in\left(  1,q_{\ast}\right)  ,$ it was shown in \cite{BeLa99} that, for
any $u_{0}\in\mathcal{M}_{b}(\mathbb{R}^{N}),$ there exists a solution $u$
with initial data $u_{0},$ unique in a suitable class, which was enlarged in
\cite{BASoWe}. The existence of a radial self-similar VSS $U$ in
$Q_{\mathbb{R}^{N},\infty},$ unique in that class, was obtained in \cite{QW};
independently in \cite{BeLaEx}, proved the existence of a VSS as a limit as
$k\rightarrow\infty$ of the solutions with initial data $k\delta_{0}$. From
\cite{BeKoLa}, it is unique among (possibly nonradial) functions such that
\begin{equation}
\lim_{t\rightarrow0}\int_{\mathbb{R}^{N}\backslash B_{r}}U(.,t)dx=0,\qquad
\forall r>0, \label{vi}%
\end{equation}%
\begin{equation}
U\in C^{2,1}\left(  Q_{\mathbb{R}^{N},\infty}\right)  \cap C((0,\infty
);L^{1}(\mathbb{R}^{N}))\cap L_{loc}^{q}((0,\infty);W^{1,q}(\mathbb{R}^{N})),
\label{plic}%
\end{equation}%
\begin{equation}
\sup_{t>0}(t^{N/2}\left\Vert u(.,t)\right\Vert _{L^{\infty}(\mathbb{R}^{N}%
)}+t^{(q(N+1)-N)/2q}\left\Vert \nabla(u^{(q-1)/q}(.,t))\right\Vert
_{L^{\infty}(\mathbb{R}^{N})}<\infty\label{ploc}%
\end{equation}
If $q\geqq q_{\ast},$ it was proved in \cite{BeLaEx} that there is no solution
$u$ in $Q_{\mathbb{R}^{N},T}$ with initial data $\delta_{0},$ under the
constraints
\begin{equation}
u\in C((0,T);L^{1}(\mathbb{R}^{N}))\cap L^{q}((0,T);W^{1,q}(\mathbb{R}^{N});
\label{res}%
\end{equation}
and the nonexistence of VSS was stated as an open problem. \medskip

In the case of the Dirichlet problem in $Q_{\Omega,T},$ with $\Omega$ bounded,
similar results were obtained in \cite{BeDa}: for $q\in\left(  1,q_{\ast
}\right)  $ and any $u_{0}\in\mathcal{M}_{b}(\Omega),$ there exists a solution
$u$ such that
\begin{equation}
u\in C((0,T);L^{1}(\Omega))\cap L^{1}((0,T);W_{0}^{1,1}(\Omega),\qquad
\left\vert \nabla u\right\vert ^{q}\in L^{1}\left(  Q_{\Omega,T}\right)  ,
\label{ress}%
\end{equation}
satisfying (\ref{me}) for any $\varphi\in C_{b}(\Omega),$ and unique in that
class; for $q\geqq q_{\ast}$ there exists no solution in this class when
$u_{0}$ is a Dirac mass; the existence or nonexistence of a VSS was not
studied.\medskip

In this article we answer to these questions and complete the description of
the solutions. \medskip

In Section \ref{two} we introduce the notion of weak solutions and study their
first properties. We extend some universal estimates of \cite{CLS} for the
Dirichlet problem. When $q\leqq2,$ we show that the solutions are smooth,
improving some results of \cite{BeKoLa}, see Theorems \ref{T.2.1} and
\ref{Dir}. We point out some particular singular solutions or supersolutions,
fundamental in the sequel. We also give some trace results, in the footsteps
of \cite{MaVe}, and apply them to the solutions of (\ref{un}), (\ref{R}%
).\medskip

Our main result is the \textit{removability} in the supercritical case $q\geqq
q_{\ast},$ proved in Section \ref{3}, extending the results of \cite{BrFr} to
equation (\ref{un}).

\begin{theorem}
\label{stop} Assume $q\geqq q_{\ast}.$ Let $\Omega$ be any domain in
$\mathbb{R}^{N}.$ Let $u\in L_{loc}^{1}(Q_{\Omega,T}),$ such that $|\nabla
u|\in L_{loc}^{q}(Q_{\Omega,T}),$ be any solution of problem%
\[
(P_{\Omega})\left\{
\begin{array}
[c]{c}%
u_{t}-\Delta u+|\nabla u|^{q}=0\hspace{0.5cm}\text{in }\mathcal{D}^{\prime
}(Q_{\Omega,T}),\\
\\
\lim_{t\rightarrow0}\int_{\Omega}u(.,t)\varphi dx=0,\qquad\forall\varphi\in
C_{c}(\Omega_{0}),
\end{array}
\right.
\]
Then the singularity is removable, in the following sense:\medskip

If $q\leqq2,$ then $u\in C(\Omega\times\left[  0,T\right)  )$ and
$u(x,0)=0,\;\forall x\in\Omega.$\medskip

If $q>2,$ then $u$ is locally bounded near $0,$ and for any domain
$\omega\subset\subset\Omega,$%
\[
\lim_{t\rightarrow0}(\sup_{Q_{\omega,t}}u)=0.
\]

\end{theorem}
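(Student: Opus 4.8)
The plan is to exploit the sign of the gradient term: $u$ is a nonnegative subsolution of the heat equation, $u_t - \Delta u \le 0$, so the whole difficulty is to upgrade the vanishing initial trace away from the origin to genuine continuity (or boundedness) at $(0,0)$. First I would localize: fix a ball $B_{2\rho}\subset\subset\Omega$ and work in $Q_{2\rho,T}$. The key quantitative input should be the universal a priori estimates extended from \cite{CLS} (cited in Section \ref{two}), which for $q\le 2$ give a bound of the form $\|\nabla u(\cdot,t)\|_{L^\infty(B_\rho)} \le C\, t^{-1/2}\big(\text{something involving} \sup u\big)$ together with smoothing for $u$ itself; for $q>2$ one gets instead a universal bound on $u$ near $0$ but not necessarily on $\nabla u$, which is exactly why the conclusion is weaker in that range. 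So the first milestone is: $u$ is locally bounded near $0$, uniformly down to $t=0^+$ on compact subsets of $\Omega_0$, and (for $q\le 2$) $u$ together with $\nabla u$ are controlled.

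The heart of the argument is to show the "defect measure" at the origin is zero, i.e. that the trace of $u$ at $t=0$ is the zero measure on all of $\Omega$ and not $c\,\delta_0$ for some $c\ge 0$. Here I would use the trace results alluded to in Section \ref{two} (in the footsteps of \cite{MaVe}): by (\ref{R}) the trace is supported at $\{0\}$, hence equals $c\delta_0$, and I must rule out $c>0$. The mechanism is a comparison/capacity argument driven by the criticality $q\ge q_*=(N+2)/(N+1)$: the fundamental singular solutions or supersolutions "pointed out" in Section \ref{two} — presumably self-similar profiles behaving like $|x|^{-(2-q)/(q-1)}$ or like the fundamental solution of the heat equation along $\delta_0$ — cannot be dominated from below by a solution of (\ref{un}) in the supercritical regime, because the gradient absorption term $|\nabla u|^q$ is too strong. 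Concretely: if $c>0$ one would compare $u$ from below, on a parabolic neighborhood of $0$, with (a truncation of) the Barenblatt kernel $c\,G(x,t)=c\,(4\pi t)^{-N/2}e^{-|x|^2/4t}$; but $\int_0^\tau\!\!\int_{B_\rho}|\nabla G|^q\,dx\,dt$ diverges precisely when $q\ge q_*$, forcing $|\nabla u|^q\notin L^1_{loc}$, contradicting the hypothesis $|\nabla u|\in L^q_{loc}$. Thus $c=0$.

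Once $c=0$, the trace of $u$ at $t=0$ is $0$ as a measure on all of $\Omega$. Then $u$ is a nonnegative subsolution of the heat equation on $Q_{2\rho,T}$ with zero initial trace and locally bounded, so by the $L^1$–$L^\infty$ parabolic smoothing (and the Duhamel/comparison bound $u \le e^{t\Delta}u(\cdot,t_0) $ pushed to $t_0\to 0$) we get $\sup_{Q_{\omega,t}}u\to 0$ as $t\to 0$ for every $\omega\subset\subset\Omega$; that already gives the stated conclusion for $q>2$. For $q\le 2$, I would then bootstrap: with $u$ bounded and $u_t-\Delta u = -|\nabla u|^q$ having an $L^\infty$–in–space, integrable–in–time right-hand side (using the $t^{-1/2}$ gradient bound so that $|\nabla u|^q\in L^r_t$ for a suitable $r$ when $q\le 2$), parabolic $L^p$ then Schauder estimates give $u\in C^{2,1}$ locally, and continuity up to $t=0$ with $u(\cdot,0)=0$ follows from the vanishing trace plus equicontinuity.

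The main obstacle I anticipate is the middle step — actually proving $c=0$. The clean divergence argument above needs care: one only knows $u\ge$ something comparable to $c\delta_0$ in a trace sense, so I would instead argue by contradiction using a renormalization/scaling: rescale around $(0,0)$ at scale $\lambda$ via $u_\lambda(x,t)=\lambda^{(2-q)/(q-1)}u(\lambda x,\lambda^2 t)$ (the natural scaling of (\ref{un})), note that for $q\ge q_*$ the mass $\int_{B_1}u_\lambda(\cdot,t)\,dx$ and the gradient integral $\int\!\!\int|\nabla u_\lambda|^q$ are controlled independently of $\lambda$ by the universal estimates, extract a limit, and show the limit is a VSS in $Q_{\mathbb R^N,\infty}$ — which is impossible in the supercritical range (this is the nonexistence left open in \cite{BeLaEx}, now to be established here, presumably via the same gradient-integrability obstruction on self-similar profiles). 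Making the passage to the limit rigorous — controlling the trace under rescaling and ensuring the limit genuinely satisfies (\ref{VS}) if $c>0$ — is where the real work lies.
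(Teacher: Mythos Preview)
Your outline has the right shape --- reduce to showing the trace is $c\delta_0$ and then rule out $c>0$ --- but the mechanism you propose for the second step does not work. The gradient-divergence argument is fallacious: from $u\ge cG$ you cannot infer any pointwise lower bound on $|\nabla u|$, so the divergence of $\int|\nabla G|^q$ for $q\ge q_*$ says nothing about $\int|\nabla u|^q$. You then retreat to the rescaling argument, but that is circular: you want the blow-up limit to be a VSS and invoke non-existence of VSS, yet non-existence of VSS is a \emph{consequence} of Theorem~\ref{stop} (the paper lists it explicitly as such), not an input. Even restricting to self-similar VSS (handled by the ODE analysis of \cite{QW}) would require first showing the blow-up limit is self-similar, which you have no means to do without already knowing uniqueness. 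A secondary gap: the universal estimates of \cite{CLS} give only $u\le C(1+t^{-1/(q-1)})$, which blows up as $t\to 0$, so your ``first milestone'' of local boundedness near $(0,0)$ is not available for free --- it is in fact the whole point.

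The paper's argument is quite different and splits by range. For $q_*\le q<2$ it bounds $u$ from \emph{above} by explicit supersolutions. When $q_*<q<2$ the stationary barrier $\Gamma(|x|)=\gamma_q|x|^{-a}$ with $a=(2-q)/(q-1)<N$ is integrable near $0$ (Lemma~\ref{L.3.2}), which forces $u\in L^\infty((0,\tau);L^1(B_r))$ and then $c=0$ by a direct mass comparison. At the critical exponent $q=q_*$ one has $a=N$ and $\Gamma\notin L^1_{loc}$, so the paper instead constructs a barrier $Y$ as the limit of the large solutions $Y_\eta^{B_n}$ of \cite{CLS} (Proposition~\ref{L.3.4}); $Y$ turns out radial self-similar, and the ODE result of \cite{QW} forces $Y\equiv 0$ for $q\ge q_*$, after which Proposition~\ref{muji} gives $u\le Y+M_r$ locally. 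For $q\ge 2$ the paper uses a device you did not anticipate: the Hopf--Cole-type substitution $z=1-e^{-u}\in[0,1]$. Since $z$ is bounded its initial trace (which exists by Lemma~\ref{L.4.3}) cannot carry a Dirac mass, and since $z\le u$ the trace vanishes on $\Omega_0$, hence on all of $\Omega$. For $q=2$, $z$ solves the heat equation exactly and one concludes at once; for $q>2$, $z$ is a subsolution of $w_t-\Delta w+w=1$, and comparison with $1-Ke^{-t}$ keeps $z$ strictly below $1$, so $u=-\ln(1-z)$ is locally bounded.
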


Observe that our conclusions hold \textit{without any condition as}\textbf{
}$\left\vert x\right\vert \rightarrow\infty$ if $\Omega=\mathbb{R}^{N},$ or
near $\partial\Omega$ when $\Omega\neq\mathbb{R}^{N}$. As a consequence, for
$q\geqq q_{\ast},\medskip$

(i) there exists\textbf{ }\textit{no VSS in }$Q_{\mathbb{R}^{N},\infty}$ in
the sense above.\medskip

(ii) there exists\textbf{ }\textit{no solution of}\textbf{ }$(P_{\Omega})$
\textit{with a Dirac mass} at $(0,0)$, without assuming (\ref{res}) or
(\ref{ress})$.\medskip$

We give different proofs of Theorem \ref{stop} according to the values of $q.$
For $q\leqq2$, we take benefit of the regularity of the solutions shown in
Section \ref{two}\noindent. When $q<2,$ we make use of supersolutions, and the
difficult case is the critical one $q=q_{\ast}.$ When $q\geqq2,$ our proof is
based on a change of unknown, and on our trace results; the case $q>2$ is the
most delicate, because of the lack of regularity.$\medskip$

Besides, if $\Omega=\mathbb{R}^{N},$ we can show a \textit{global
removability, without condition at }$\infty$\textit{:}

\begin{theorem}
\label{comp}Under the assumptions of Theorem \ref{stop} with $\Omega
=\mathbb{R}^{N},$ then
\[
u(x,t)\equiv0,\qquad\text{a.e. in }\mathbb{R}^{N}\text{,\quad\ for any }t>0.
\]

\end{theorem}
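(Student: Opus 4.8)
The plan is to bootstrap from the local removability result in Theorem~\ref{stop} to a genuinely global statement. By Theorem~\ref{stop} applied with $\Omega=\mathbb{R}^N$, the solution $u$ extends continuously (or at least is locally bounded, when $q>2$) up to $t=0$ with $u(x,0)=0$ for all $x\in\mathbb{R}^N$; in particular $u$ is a bona fide nonnegative solution of the Cauchy problem on $\mathbb{R}^N$ with zero initial data, in the sense that $\lim_{t\to0}\int_{\mathbb{R}^N}u(\cdot,t)\varphi\,dx=0$ now holds for \emph{all} $\varphi\in C_c(\mathbb{R}^N)$, not merely for $\varphi$ supported away from the origin. The claim is then that any such solution must vanish identically, i.e. we need a uniqueness/comparison statement for the Cauchy problem \emph{with no growth restriction at infinity} for nonnegative solutions.

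The key structural fact I would exploit is that $v\equiv0$ is a solution, and that $u\ge0$, so it suffices to prove $u\le0$, i.e. $u\equiv0$. The natural route is a comparison argument. First I would truncate in space: fix $R>0$ and compare $u$ on the cylinder $B_R\times(0,T)$ against a supersolution that dominates $u$ on the lateral boundary $\partial B_R\times(0,T)$ and at $t=0$, then let $R\to\infty$. The point of the viscous Hamilton--Jacobi structure is that the gradient term $|\nabla u|^q$ is \emph{dissipative} and helps rather than hurts: any classical supersolution of the heat equation is automatically a supersolution of \eqref{un}. So one can take, for instance, a Gaussian-type barrier or a small multiple of a heat kernel tail, or even just use the fact established in Section~\ref{two} (the universal/a~priori estimates, in the spirit of \cite{CLS}) that $u(\cdot,t)$ is bounded on compact sets with a bound independent of the behavior at infinity. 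Concretely, the comparison principle on $B_R\times(0,T)$ gives $u(x,t)\le \sup_{\partial B_R\times(0,T)}u$ for $x\in B_R$; but this sup need not go to $0$, so one instead compares with a time-dependent barrier $W_R(x,t)$ that equals the (locally uniform, by Theorem~\ref{stop}) bound of $u$ on $\partial B_R$ and decays into the interior. Passing $R\to\infty$ with the help of the vanishing initial trace then forces $u(x,t)=0$.

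An alternative and perhaps cleaner implementation is to use the $L^1$-contraction / mass-monotonicity structure directly. Since $u\ge0$ solves \eqref{un} and $|\nabla u|^q\ge0$, integrating formally gives $\frac{d}{dt}\int_{B_R}u\,dx\le \int_{\partial B_R}\partial_\nu u\,d\sigma - \int_{B_R}|\nabla u|^q\,dx$; combined with the regularity from Section~\ref{two} and the removability just proven, letting $t\to0$ kills the mass at time $0$, and one shows $\int_{B_R}u(\cdot,t)\,dx$ stays $0$ for all $t$. The honest obstacle in either approach is controlling the flux through $\partial B_R$ as $R\to\infty$ with no hypothesis on $u$ at infinity; this is exactly where one must invoke the local-in-space universal bounds (which, crucially, do not see infinity) to guarantee the boundary terms are harmless in the limit. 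I expect this flux/tail estimate to be the main technical point, while the comparison with $0$ and the use of dissipativity of $|\nabla u|^q$ are routine once the removability theorem supplies the continuous zero initial data.
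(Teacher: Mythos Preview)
Your overall strategy is sound: first invoke Theorem~\ref{stop} so that the initial trace vanishes for \emph{all} $\varphi\in C_c(\mathbb{R}^N)$, then show that any nonnegative solution of the Cauchy problem on $\mathbb{R}^N$ with zero initial data must vanish. This is exactly the reduction the paper makes (Proposition~\ref{pro}). However, the mechanism you propose for the second step has a genuine gap.

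You correctly identify that the obstacle is controlling the boundary/flux term as $R\to\infty$ with no hypothesis at infinity, but your proposed cure---``invoke the local-in-space universal bounds''---does not work. The universal estimates (of \cite{CLS} type, or Lemma~\ref{mono}) give bounds such as $u(x,t)\le C(1+t^{-1/(q-1)})$ and $|\nabla u(x,t)|\le D(t)$ that are \emph{uniform in $x$} but do not decay as $|x|\to\infty$. Hence the flux $\int_{\partial B_R}|\partial_\nu u|\,d\sigma$ is at best $O(R^{N-1})$, and your comparison barriers $W_R$ would have no reason to vanish in the interior. The same objection applies to the mass-balance alternative.

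The paper's device is different and does not use any a~priori bound on $u$ or $\nabla u$ at infinity. One tests with $\psi=\xi^{q'}$ for a cutoff $\xi$ supported in $B_{2R}$, $\xi\equiv1$ on $B_R$. The cross term $\int\nabla u\cdot\nabla\psi$ is then controlled by Young's inequality against the \emph{absorption term itself}:
\[
q'\int \xi^{q'-1}|\nabla u|\,|\nabla\xi|\;\le\;\tfrac12\int|\nabla u|^q\xi^{q'}+C_q\int|\nabla\xi|^{q'},
\]
and the remainder scales like $R^{N-q'}$. When $q<N/(N-1)$ one has $N-q'<0$, so $R\to\infty$ yields $\int_{B_r}u(\cdot,t)\,dx=0$ directly. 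For $q\ge N/(N-1)$ a second idea is needed: fix $k\in(1,N/(N-1))$ and observe that $\eta|\nabla u|^k\le\eta+|\nabla u|^q$, so $w_\eta=\eta^{1/(k-1)}(u-\eta t)$ is a subsolution of the equation with exponent $k$; apply the first case to $w_\eta^+$ (via Kato's inequality) and let $\eta\to0$. Neither of these two ingredients---the $\xi^{q'}$ test function that lets the absorption absorb the flux, and the exponent-reduction trick for large $q$---appears in your proposal, and without them the argument does not close.
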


In Section \ref{vs}, we complete the study of the subcritical case $q<q_{\ast
}$. Our main result in this range is the uniqueness of the VSS in
$Q_{\mathbb{R}^{N},\infty}$ \textit{without any condition}:

\begin{theorem}
\label{unique}Let $q\in\left(  1,q_{\ast}\right)  .$ Then there exists a
unique VSS in $Q_{\mathbb{R}^{N},\infty}.$
\end{theorem}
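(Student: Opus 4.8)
The plan is to establish uniqueness of the VSS by comparing an arbitrary VSS $u$ in $Q_{\mathbb{R}^N,\infty}$ with the canonical radial self-similar VSS $U$ produced in \cite{QW}, \cite{BeLaEx}, whose uniqueness among functions satisfying the extra structural conditions \eqref{vi}--\eqref{ploc} is known from \cite{BeKoLa}. So the whole task reduces to showing that \emph{every} VSS in the bare sense of \eqref{RR}--\eqref{VS} automatically enjoys the decay property \eqref{vi} at infinity (the smoothness and smallness conditions \eqref{plic}--\eqref{ploc} then follow from the parabolic regularity theory and universal gradient estimates of Section \ref{two}, in particular Theorem \ref{T.2.1} and the estimates imported from \cite{CLS}). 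Thus the heart of the matter is: a VSS cannot carry mass out to infinity; it must concentrate all its singular behaviour at the origin.

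First I would record the a priori bounds available for any VSS: by the universal/smoothing estimates of Section \ref{two}, for $t>0$ one has $u(\cdot,t)\in L^1(\mathbb{R}^N)\cap L^\infty(\mathbb{R}^N)$ with the self-similar-type bounds $\|u(\cdot,t)\|_{L^\infty}\le C t^{-N/2}$ and a matching gradient estimate, and $u\in C^{2,1}(Q_{\mathbb{R}^N,\infty})$ since we may as well assume $q\le 2$ is not needed here — for $q<q_\ast<2$ the solutions are classical. In particular $M(t):=\int_{\mathbb{R}^N}u(\cdot,t)\,dx$ is finite and, integrating the equation, nonincreasing in $t$; condition \eqref{VS} forces $M(t)\to\infty$ as $t\to 0^+$. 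Next, from the local integrability and the initial condition \eqref{RR}, one shows (as in the trace analysis of Section \ref{two}, following \cite{MaVe}) that $u(\cdot,t)\rightharpoonup c\,\delta_0$ as $t\to0^+$ in $\mathcal{M}_b(\mathbb{R}^N)$ for some $c\in(0,\infty]$; the point is that any weak-$*$ limit of $u(\cdot,t)$ is a nonnegative measure supported at $\{0\}$, hence a multiple of $\delta_0$, and the multiple is $\lim M(t)=\infty$.

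The decisive step is then the decay at infinity \eqref{vi}. I would prove it by a rescaling/compactness argument: set $u_\lambda(x,t)=\lambda^{(2-q)/(q-1)}u(\lambda x,\lambda^2 t)$, which is the natural self-similar scaling leaving \eqref{un} invariant; each $u_\lambda$ is again a VSS (the conditions \eqref{RR}, \eqref{VS} and the universal estimates are scale-invariant), and by the uniform bounds the family $\{u_\lambda\}_{\lambda\ge 1}$ is relatively compact in $C^{2,1}_{loc}(Q_{\mathbb{R}^N,\infty})$. Any limit $u_\infty$ is a VSS that is moreover \emph{invariant} under the scaling, i.e.\ self-similar. For self-similar profiles the ODE analysis of \cite{QW} (which I may invoke) shows the profile decays like $|x|^{-(2-q)/(q-1)}$ at infinity fast enough that \eqref{vi} holds for $u_\infty$; then one transfers this back to $u$ by a Harnack-type / comparison argument controlling the tail $\int_{\mathbb{R}^N\setminus B_r}u(\cdot,t)\,dx$ uniformly. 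An alternative, perhaps cleaner route avoiding self-similarity: use the supersolutions exhibited in Section \ref{two} — functions of the form $C|x|^{-(2-q)/(q-1)}$ or separable supersolutions — as barriers from above outside a fixed ball, together with the comparison principle for \eqref{un}, to dominate the tail of $u$ by that of $U$ directly. Once \eqref{vi}--\eqref{ploc} are in hand for $u$, the uniqueness theorem of \cite{BeKoLa} gives $u=U$, completing the proof.

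I expect the main obstacle to be precisely the no-mass-escape statement \eqref{vi}: a general VSS is only assumed to be a distributional solution with $|\nabla u|\in L^q_{loc}$, so one must first bootstrap to enough regularity and global integrability to run comparison with the barriers, and then rule out the scenario in which $u$ has a genuinely nonradial, slowly-decaying tail for $t$ bounded away from $0$. The comparison principle itself is delicate here because $u$ need not be a priori bounded near $t=0$ and the barrier must be handled on the parabolic boundary of an exterior cylinder $(\mathbb{R}^N\setminus B_r)\times(0,T)$, where one uses \eqref{RR} on the lateral-in-$x$ part $|x|=r$ (approximating, since \eqref{RR} is only tested against $C_c(\mathbb{R}^N\setminus\{0\})$) and the decay of the barrier at $|x|=\infty$; making this rigorous, via truncation and a Kato-type inequality, is the technical core.
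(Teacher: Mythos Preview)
Your strategy is to place an arbitrary VSS $u$ inside the uniqueness class of \cite{BeKoLa} by verifying \eqref{vi}--\eqref{ploc}, then quote that result. The paper does something cleaner and does not use \cite{BeKoLa} at all: it sandwiches $U\leqq u\leqq Y$ between two \emph{radial self-similar} VSS and closes the sandwich via the ODE uniqueness of \cite{QW}. The upper bound $u\leqq Y$ comes from Proposition~\ref{muji} (comparison with the large solutions $Y_\eta^G$ of Theorem~\ref{L.3.3}), which already gives the tail decay \eqref{vive} and the $L^\infty$ bound \eqref{vir} for free---no rescaling or Harnack transfer is needed. The lower bound $U\leqq u$ is obtained by an approximation-from-below trick: since $\int u(\cdot,1/n)\,dx\to\infty$, one truncates $u(\cdot,1/n)$ to build smooth, compactly supported data $u_{0,n}^k\leqq u(\cdot,1/n)$ with $\int u_{0,n}^k\to k$, runs the Dirichlet problem in $B_p$ with that data, uses comparison with $u(\cdot,t+1/n)$, and passes to the limits $n\to\infty$, $p\to\infty$, $k\to\infty$ (Proposition~\ref{conv}, Lemma~\ref{solk}). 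Since both $U$ and $Y$ are radial and self-similar, \cite{QW} forces $U=Y$, hence $u=U=Y$.

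Your proposal has a genuine gap in the rescaling step. Compactness of $\{u_\lambda\}$ only gives subsequential limits; you have no monotonicity in $\lambda$ and no reason for the full family to converge, so the limit $u_\infty$ need not be scaling-invariant. Worse, even if some limit along $\lambda_n\to\infty$ were self-similar and equal to $U$, this says nothing about the tail of the original $u$ at fixed $t$: the ``transfer back by Harnack/comparison'' is not a real argument as written. Your alternative barrier route is closer to the truth, but the stationary supersolution $\Gamma(|x|)$ only gives a \emph{bounded} tail integral, not one tending to $0$; to get \eqref{vi} you need the time-dependent object $Y$, i.e.\ exactly the large-solution comparison the paper builds. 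Finally, the claim that \eqref{ploc} (a sharp $t^{N/2}$ decay) follows from the universal estimates of Section~\ref{two} is not supported: those estimates (e.g.\ \eqref{vir}) give only $t^{-1/(q-1)}$, and since $q<q_\ast$ one has $1/(q-1)>N+1>N/2$, so this is strictly weaker.
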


\noindent Moreover we give a complete description of the solutions:

\begin{theorem}
\label{desc}Let $q\in\left(  1,q_{\ast}\right)  .$ Let $u\in L_{loc}%
^{1}(Q_{\mathbb{R}^{N},\infty}),$ be any function such that $|\nabla u|\in
L_{loc}^{q}(Q_{\mathbb{R}^{N},\infty}),$ solution of equation (\ref{un}) in
$\mathcal{D}^{\prime}(Q_{\mathbb{R}^{N},\infty}),$ and satisfying (\ref{RR}). Then

$\bullet$ either (\ref{VS}) holds and $u=U,$

$\bullet$ or there exists $k>0$ such that $u(.,0)=$ $k\delta_{0}$ in the weak
sense of $\mathcal{M}_{b}(\mathbb{R}^{N}):$
\begin{equation}
\lim_{t\rightarrow0}\int_{\mathbb{R}^{N}}u(.,t)\varphi dx=k\varphi
(0),\qquad\forall\varphi\in C_{b}(\mathbb{R}^{N}), \label{ink}%
\end{equation}
and $u$ is the unique solution satisfying (\ref{ink}),

$\bullet$ or $u\equiv0.$\medskip
\end{theorem}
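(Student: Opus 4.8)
The plan is to combine the results established earlier with a classification dichotomy based on the initial trace of $u$. The starting point is the trace theory announced in Section~\ref{two} (in the footsteps of \cite{MaVe}): any nonnegative solution $u$ of \eqref{un} satisfying \eqref{RR} admits at $t=0$ an initial trace which is a nonnegative Radon measure on $\mathbb{R}^N$ supported at the single point $\{0\}$, because \eqref{RR} forces the trace to vanish away from the origin. Hence either the trace is $+\infty$ at $0$ (the singular case), in which case \eqref{VS} holds, or the trace is a finite mass $k\delta_0$ with $k\geq 0$; and $k=0$ is exactly the case \eqref{R} of Theorem~\ref{stop}. This immediately splits the analysis into the three bullets of the statement; what remains is to identify $u$ in each case.

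First, the case $k=0$: here $u$ solves $(P_{\mathbb{R}^N})$ in the subcritical range $q<q_\ast$, so I cannot invoke Theorem~\ref{stop} (which requires $q\geq q_\ast$). Instead I would use the a priori estimates of \cite{CLS} extended in Section~\ref{two} to control $u$ near $t=0$, together with the fact that the trace is the zero measure, and a Gronwall/comparison argument against the explicit singular supersolutions pointed out in Section~\ref{two} to conclude $u\equiv 0$; alternatively, one shows $u$ is dominated by the solution with initial data $k\delta_0$ for every $k>0$ and lets $k\to 0$ using continuous dependence. Second, the case $0<k<\infty$: by the extended uniqueness results (the class of \cite{BASoWe}, enlarged beyond \cite{BeLa99}) the solution with initial data $k\delta_0$ is unique, but one must first check that our weak solution $u$, a priori only in $L^1_{loc}$ with $|\nabla u|\in L^q_{loc}$, actually lies in that uniqueness class. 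This is where the regularity and universal estimates of Section~\ref{two} enter: they upgrade $u$ to $C((0,\infty);L^1(\mathbb{R}^N))\cap L^q_{loc}((0,\infty);W^{1,q}(\mathbb{R}^N))$ with the decay \eqref{ploc}, placing it in the admissible class and forcing $u$ to equal the known unique solution with data $k\delta_0$.

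Third, the very singular case: \eqref{RR} and \eqref{VS} hold, and I must show $u=U$. The existence of $U$ and its uniqueness \emph{under the extra conditions} \eqref{vi}–\eqref{ploc} come from \cite{QW,BeLaEx,BeKoLa}; so the real content here is Theorem~\ref{unique}, whose statement drops all conditions at infinity. Thus the proof of Theorem~\ref{desc} in this branch is essentially: invoke Theorem~\ref{unique} to get that the VSS is unique, note that $U$ is a VSS, conclude $u=U$. The only subtlety is verifying that our weak $u$ satisfying merely \eqref{RR}–\eqref{VS} is a VSS in the precise sense defined in the Introduction (the integrability $|\nabla u|\in L^q_{loc}$ is already assumed); this is immediate from the definition.

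The main obstacle I anticipate is the very first step — showing that an arbitrary solution with only $L^1_{loc}$ and $L^q_{loc}(\nabla)$ regularity has a well-defined initial trace that is a measure concentrated at $0$, and in particular ruling out a trace with a nontrivial absolutely continuous or singular-but-not-point part. This requires the trace machinery adapted from \cite{MaVe} together with the parabolic regularity and the universal sup-bounds from Section~\ref{two} (using $q<q_\ast\leq 2$, so the smoothness results of Theorems~\ref{T.2.1}–\ref{Dir} apply on $(0,\infty)$), so that away from $t=0$ the solution is classical and monotone-type arguments in $t$ give the limiting measure. Once the trace dichotomy is in hand, the three cases follow from, respectively, a comparison/vanishing argument, the enlarged uniqueness of \cite{BASoWe}, and Theorem~\ref{unique}.
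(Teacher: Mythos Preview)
Your overall trichotomy structure is correct and matches the paper's proof. However, there are two concrete points where your proposal is vaguer than it needs to be, and one key ingredient you do not mention explicitly.

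The missing ingredient is Proposition~\ref{muji}: every weak solution of \eqref{un}, \eqref{RR} in $Q_{\mathbb{R}^N,\infty}$ satisfies $u\le Y$ globally, and this is what drives all three branches. It gives the regularity upgrade $u\in C^{2,1}(Q_{\mathbb{R}^N,\infty})\cap C((0,\infty);C_b^2(\mathbb{R}^N))$ you need for the $k>0$ case, and combined with \eqref{vive} it is precisely what lets you pass from weak$^*$ convergence against $C_c(\mathbb{R}^N)$ (which is all Proposition~\ref{dic} gives) to convergence against $C_b(\mathbb{R}^N)$ as required in \eqref{ink}. You mention the regularity upgrade but do not say where it comes from; in the paper it is the comparison $u\le Y$ via the large solutions $Y_\eta^{B_R}$.

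For the $k=0$ case, your proposed workarounds (comparison with singular supersolutions, or domination by $u^k$ for all $k>0$) are unnecessarily indirect and not obviously easy to make rigorous. The paper instead invokes Proposition~\ref{pro}, which holds for \emph{all} $q>1$ (not only $q\ge q_*$): any nonnegative weak subsolution in $Q_{\mathbb{R}^N,T}$ with $u\in C((0,T);L^1_{loc})$ and vanishing trace against $C_c(\mathbb{R}^N)$ is identically zero. This is exactly the situation once you know the trace is $0\cdot\delta_0$, so the conclusion is immediate. For the $k>0$ case, the paper does not cite \cite{BASoWe} but instead writes the semi-group formula on $[\epsilon,t]$, uses the Bernstein-type gradient bound $|\nabla u(.,t)|^q\le C(q)t^{-1}u(.,t)$ from \cite{BeLa99,BeBALa} (available because $u\in C((0,\infty);C_b^2(\mathbb{R}^N))$), passes to the limit $\epsilon\to 0$ using $u\le Y$, and applies the singular Gronwall lemma (valid precisely because $q<q_*$) to $\|\nabla(u-u^k)(.,t)\|_{L^q}$. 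Your route via the uniqueness class of \cite{BASoWe} is a legitimate alternative, but you would still need the same a priori bound $u\le Y$ and the resulting decay to place $u$ in that class.
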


We also consider the Dirichlet problem in $Q_{\Omega,T}$ when $\Omega$ is
bounded:
\begin{equation}
(D_{\Omega,T})\left\{
\begin{array}
[c]{c}%
u_{t}-\Delta u+|\nabla u|^{q}=0\qquad\text{in }Q_{\Omega,T}\\
u=0\hspace{0.5cm}\text{on }\partial\Omega\times\left(  0,\infty\right)  .
\end{array}
\right.  \label{DP}%
\end{equation}
We give a notion of VSS for this problem, generally nonradial, and show the
parallel of Theorem \ref{unique}:

\begin{theorem}
\label{vss} Assume that $q\in\left(  1,q_{\ast}\right)  $ and $\Omega$ is a
smooth bounded domain of $\mathbb{R}^{N}.$ Then there exists a unique VSS of
problem $(D_{\Omega,\infty}).$
\end{theorem}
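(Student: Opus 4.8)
The plan is to establish existence and uniqueness of the VSS for the Dirichlet problem $(D_{\Omega,\infty})$ separately, following the template already used for the $\mathbb{R}^N$ case in Theorem \ref{unique}, but now exploiting the boundedness of $\Omega$ to simplify the analysis at infinity while taking care of the lateral boundary condition.

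\textbf{Existence.} First I would construct a candidate VSS as a monotone limit. For each $k>0$, let $u_k$ be the solution of the Dirichlet problem $(D_{\Omega,\infty})$ with initial data $k\delta_0$; such solutions exist and are unique in the class \eqref{ress} by the results of \cite{BeDa} recalled in the introduction, since $q<q_\ast$. The comparison principle (available once one has the weak formulation and regularity from Section \ref{two}) gives $u_k \le u_{k'}$ for $k \le k'$, so $U_\Omega := \lim_{k\to\infty} u_k$ is well defined, pointwise and monotone. The crucial point is that this limit is finite and nontrivial: here I would invoke the universal a priori estimates extended from \cite{CLS} in Section \ref{two} for the Dirichlet problem — these bound $u(x,t)$ and $|\nabla u^{(q-1)/q}(x,t)|$ in terms of $\operatorname{dist}(x,\partial\Omega)$ and $t$ alone, independently of the initial data. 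This uniform bound lets one pass to the limit in the equation (using $L^q_{loc}$ bounds on $|\nabla u_k|$ coming from the same estimates and a standard compactness/monotone convergence argument) to conclude $U_\Omega$ solves \eqref{un} in $\mathcal{D}'(Q_{\Omega,\infty})$, vanishes on $\partial\Omega$, satisfies the removability-type condition away from $0$ at $t=0$, and satisfies the blow-up condition \eqref{VS} because $u_k \ge$ (the solution with data $k\delta_0$) forces $\int_{B_r} U_\Omega(.,t)\,dx \ge \liminf \int_{B_r} u_k(.,t)\,dx$, and one checks this is $+\infty$ using that the mass of $u_k$ near $0$ for small $t$ is bounded below by a quantity tending to $\infty$ with $k$ (a mass-comparison with the known $\mathbb{R}^N$ VSS $U$, or with a subsolution supported near $0$).

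\textbf{Uniqueness.} Let $V$ be any VSS of $(D_{\Omega,\infty})$. The strategy is to show $u_k \le V$ for all $k$ — hence $U_\Omega \le V$ — and then the reverse inequality $V \le U_\Omega$. For the first direction, one compares $V$ with $u_k$ on $Q_{\Omega,\infty}$: since $V$ satisfies \eqref{VS}, its initial trace dominates $k\delta_0$ for every $k$ in the appropriate weak sense (this is where the VSS definition, specifically the singular concentration at $0$ together with the removability away from $0$, is used), and the comparison principle for the Dirichlet problem with ordered initial data — which on a bounded domain is clean because there is no issue at infinity and the boundary data coincide — yields $u_k \le V$. For the reverse inequality, the idea is the one indicated in the introduction for the $\mathbb{R}^N$ case: one uses that $V$, like $U_\Omega$, has zero initial trace outside every $B_r$ (this follows on a bounded domain directly from the defining condition, which plays the role of \eqref{vi}), so for $\varepsilon>0$ the function $V(\cdot,\cdot+\varepsilon)$ has bounded initial data $V(\cdot,\varepsilon)\in L^1(\Omega)$ whose mass is finite; comparing it with $u_k$ for $k$ large enough that $k\delta_0$ dominates $V(\cdot,\varepsilon)$ in the relevant sense as $\varepsilon\to 0$, and letting $\varepsilon\to0$ then $k\to\infty$, gives $V \le U_\Omega$. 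Combining, $V = U_\Omega$.

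\textbf{Main obstacle.} The delicate step is the reverse comparison $V \le U_\Omega$, because it requires controlling the behaviour of an arbitrary VSS $V$ as $t\to 0^+$ precisely enough to say that, for small $\varepsilon$, the data $V(\cdot,\varepsilon)$ is dominated by some $k(\varepsilon)\delta_0$ with $k(\varepsilon)\to\infty$ — in other words, that $V$ concentrates \emph{all} its mass at $0$ and does so "monotonically enough." On a bounded domain the lateral boundary condition actually helps (no tail at infinity to control, and the universal estimates degenerate gracefully near $\partial\Omega$), but one still needs the trace results of Section \ref{two} (following \cite{MaVe}) to make rigorous the statement that the initial trace of $V$ is exactly a multiple of $\delta_0$ that is "infinite," and to justify the time-shift comparison argument. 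A secondary technical point is verifying that the monotone limit $U_\Omega$ genuinely satisfies \eqref{VS} rather than merely being a large solution with removable singularity; this is handled by the subsolution lower bound near $0$ mentioned above, using the self-similar VSS $U$ on $\mathbb{R}^N$ restricted to a small ball contained in $\Omega$ and the comparison principle on that ball with the boundary contribution absorbed.
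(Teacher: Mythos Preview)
Your existence argument and the direction $U_\Omega\le V$ (minimality) are essentially what the paper does: construct $U^\Omega=\lim_k u^{k,\Omega}$, bound it above using the universal estimates (the paper uses $u^{k,\Omega}\le Y$, the $\mathbb{R}^N$ VSS), and show minimality by approximating $k\delta_0$ from below by smooth data $\le V(\cdot,1/n)$.

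The genuine gap is in your reverse inequality $V\le U_\Omega$. Your plan is to compare the time-shifted $V(\cdot,\cdot+\varepsilon)$, whose initial datum $V(\cdot,\varepsilon)$ is a positive $L^1$ function, with $u_k$ for $k$ large. But no $k\delta_0$ dominates a nontrivial $L^1$ function in the sense needed for the comparison principle: a Dirac mass is concentrated at a single point while $V(\cdot,\varepsilon)$ is spread over $\Omega$. So the comparison $V(\cdot,\varepsilon)\le k\delta_0$ is simply false as measures, and the argument as written cannot close. You flag this as the ``main obstacle'' but do not overcome it.

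The paper handles this step by an entirely different mechanism. It builds a \emph{maximal} VSS $Y^\Omega$ as the decreasing limit (as $\eta\to 0$) of the large solutions $Y_\eta^\Omega$ of Theorem~\ref{L.3.3}, whose initial data are $+\infty$ on $B_\eta$ and $0$ outside; these do dominate $V(\cdot,\delta)$ on $B_\eta$ trivially, and dominate it on $\Omega\setminus B_\eta$ because the regularity of Lemma~\ref{dru} gives $V(x,t)<\varepsilon$ there for small $t$. This yields $V\le Y^\Omega$ for every VSS $V$. Uniqueness then reduces to $Y^\Omega=U^\Omega$, which the paper proves using the already-established $\mathbb{R}^N$ uniqueness $Y=U$: one shows $u^{k,B_p}\le u^{k,\Omega}+\delta$ on $\overline\Omega\times(0,\tau_\delta]$ by comparison, lets $p\to\infty$, $k\to\infty$ to get $U\le U^\Omega+\delta$; since $Y^\Omega\le Y=U$, the difference $W^\Omega=Y^\Omega-U^\Omega$ tends uniformly to $0$ as $t\to 0$, and a maximum-principle argument gives $W^\Omega\le 0$. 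The large solutions $Y_\eta^\Omega$ are exactly the missing ingredient that replaces your unworkable ``$k\delta_0$ dominates $V(\cdot,\varepsilon)$'' step.
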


\noindent Finally we describe all the solutions as above. \medskip

In conclusion, $q_{\ast}$ clearly appears as the upperbound for existence of
solutions with an isolated singularity at time $0$. We refer to \cite{BiDao}
for the study of equation (\ref{un}) or more general quasilinear parabolic
equations with rough initial data, where we give new decay and uniqueness
properties. The problem of removability of nonpunctual singularities will be
the object of a further article.

\section{Weak solutions and regularity\label{two}}

\subsection{First properties of the weak solutions}

We set $Q_{\Omega,s,\tau}=\Omega\times\left(  s,\tau\right)  ,$ for any domain
$\Omega\subset\mathbb{R}^{N},$ any $-\infty\leqq s<\tau\leqq\infty,$ thus
$Q_{\Omega,T}=Q_{\Omega,0,T}.$

\begin{definition}
For any function $\Phi\in L_{loc}^{1}(Q_{\Omega,T}),$ we say that a function
$U$ is a weak solution (resp. subsolution, resp. supersolution) of equation
\begin{equation}
U_{t}-\Delta U=\Phi\qquad\text{in }Q_{\Omega,T}, \label{uphi}%
\end{equation}
if $U\in L_{loc}^{1}(Q_{\Omega,T})$ and, for any $\varphi\in\mathcal{D}%
^{+}(Q_{\Omega,T}),$
\[
\int_{0}^{T}\int_{\Omega}(U\varphi_{t}+U\Delta\varphi+\Phi\varphi
)dxdt=0\qquad(\text{resp.}\leqq,\text{ resp.}\geqq).
\]

\end{definition}

In all the sequel we use regularization arguments by to deal with weak solutions:

\begin{notation}
For any function $u$ $\in L_{loc}^{1}(Q_{\Omega,T}),$ we set
\[
u_{\varepsilon}=u\ast\varrho_{\varepsilon},
\]
where $(\varrho_{\varepsilon})$ is sequence of mollifiers in $\left(
x,t\right)  \in\mathbb{R}^{N+1}.$ Then $u_{\varepsilon}$ is well defined in
$Q_{\Omega,s,\tau}$ for any domain $\omega\subset\subset\Omega$ and
$0<s<\tau<T$ and $\varepsilon>0$ small enough.
\end{notation}

\begin{lemma}
\label{appro} Any solution (resp. subsolution) $U$ of (\ref{uphi}) such that
$U\in C((0,T);L_{loc}^{1}(\Omega))$ satisfies also for any nonnegative
$\varphi\in C_{c}^{\infty}(\Omega\times\left[  0,T\right]  )$ and any
$s,\tau\in(0,T),$
\begin{equation}
\int_{\Omega}U(.,\tau)\varphi(.,\tau)dx-\int_{\Omega}U(.,t)\varphi
(.,t)dx-\int_{s}^{\tau}\int_{\Omega}(U\varphi_{t}+U\Delta\varphi+\Phi
\varphi)dxdt=0\qquad(\text{resp.}\leqq0) \label{ger}%
\end{equation}
and for any nonnegative $\psi\in$ $C_{c}^{2}\left(  \Omega\right)  ,$%
\begin{equation}
\int_{\Omega}U(.,\tau)\psi dx-\int_{\Omega}U(.,s)\psi dx-\int_{s}^{\tau}%
\int_{\Omega}(U\Delta\psi+\Phi\psi)dxdt=0\qquad(\text{resp.}\leqq0).
\label{gir}%
\end{equation}

\end{lemma}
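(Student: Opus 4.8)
The plan is to prove Lemma \ref{appro} by a standard mollification argument, exploiting the continuity hypothesis $U\in C((0,T);L^{1}_{loc}(\Omega))$ to give meaning to the pointwise-in-time traces $U(\cdot,\tau)$ and $U(\cdot,s)$. First I would fix a domain $\omega\subset\subset\Omega$ containing the support of $\varphi(\cdot,t)$ for all $t$, and work on $Q_{\omega,s',\tau'}$ with $0<s'<s<\tau<\tau'<T$. Applying the definition of weak (sub)solution with test functions of the form $\varphi(x,t)\theta(t)$, where $\theta\in\mathcal{D}^{+}((s',\tau'))$, shows that the distribution $t\mapsto\int_{\Omega}U(\cdot,t)\varphi(\cdot,t)\,dx - \int_{s'}^{t}\!\int_{\Omega}(U\varphi_{t}+U\Delta\varphi+\Phi\varphi)$ has a sign-definite (for a subsolution: nonincreasing) distributional derivative; combined with the continuity in $t$ of all the terms, this yields the integrated inequality \eqref{ger}. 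The cleanest bookkeeping, though, is to regularize: replace $U$ by $U_{\varepsilon}=U\ast\varrho_{\varepsilon}$, which is smooth on $Q_{\omega,s,\tau}$ for $\varepsilon$ small, and satisfies $(U_{\varepsilon})_{t}-\Delta U_{\varepsilon}=\Phi_{\varepsilon}$ (with $\le$ for a subsolution, after mollifying against the nonnegative kernel) in the classical sense there. For the smooth function $U_{\varepsilon}$ the identity \eqref{ger} is just integration by parts in space-time, valid for every $\varphi\in C_{c}^{\infty}(\omega\times[0,T])$ and every $s,\tau$.

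The second step is to pass to the limit $\varepsilon\to0$. The interior terms $\int_{s}^{\tau}\!\int_{\omega}(U_{\varepsilon}\varphi_{t}+U_{\varepsilon}\Delta\varphi+\Phi_{\varepsilon}\varphi)$ converge to the corresponding integrals with $U,\Phi$ because $U_{\varepsilon}\to U$ and $\Phi_{\varepsilon}\to\Phi$ in $L^{1}(Q_{\omega,s,\tau})$. The genuinely delicate terms are the time-slice integrals $\int_{\omega}U_{\varepsilon}(\cdot,\tau)\varphi(\cdot,\tau)\,dx$ and $\int_{\omega}U_{\varepsilon}(\cdot,s)\varphi(\cdot,s)\,dx$: here one uses that $U\in C((0,T);L^{1}_{loc}(\Omega))$, so for \emph{fixed} $\tau\in(0,T)$ the averaged functions $U_{\varepsilon}(\cdot,\tau)$ — if one mollifies in $t$ — converge to $U(\cdot,\tau)$ in $L^{1}(\omega)$ (a Lebesgue-point / continuity argument: $\|U_{\varepsilon}(\cdot,\tau)-U(\cdot,\tau)\|_{L^{1}(\omega)}\le\int\varrho_{\varepsilon}(y,\sigma)\|U(\cdot-y,\tau-\sigma)-U(\cdot,\tau)\|_{L^{1}(\omega)}\,dy\,d\sigma\to0$ by uniform continuity of $\sigma\mapsto U(\cdot,\sigma)$ near $\tau$ together with continuity of translation in $L^{1}$). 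This is where the hypothesis is essential and where I expect the main (albeit mild) technical obstacle to lie: making sure the time-regularization does not spoil the pointwise-in-$t$ control, which is why one should choose the mollifier $\varrho_{\varepsilon}$ either as a genuine product $\varrho_{\varepsilon}^{x}\otimes\varrho_{\varepsilon}^{t}$ or argue first with a purely spatial mollification and then a separate temporal averaging. For a subsolution the inequalities are preserved under each of these limits since the kernel is nonnegative and the convergences are in $L^{1}$.

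Finally, \eqref{gir} follows from \eqref{ger} by specialization and a further density argument. Taking $\varphi(x,t)=\psi(x)\eta(t)$ with $\psi\in C_{c}^{2}(\Omega)$, $\psi\ge0$, and $\eta\in C^{\infty}([0,T])$ with $\eta\equiv1$ on $[s,\tau]$ and supported in $(s',\tau')$, the term $\int U\varphi_{t}$ contributes only on $(s',s)\cup(\tau,\tau')$; letting $\eta$ increase to $\mathbf{1}_{[s,\tau]}$ and using $U\in C((0,T);L^{1}_{loc})$ to evaluate $\int_{\omega}U(\cdot,s)\psi$ and $\int_{\omega}U(\cdot,\tau)\psi$ as honest limits, the cross term disappears and one is left with exactly \eqref{gir}. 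Since $C_{c}^{\infty}$ functions are dense among $C_{c}^{2}$ in the relevant topology (uniform convergence of the function and its derivatives up to order $2$), \eqref{ger}, which was first obtained for $\varphi\in C_{c}^{\infty}$, extends to all admissible $\varphi$, and correspondingly \eqref{gir} holds for all nonnegative $\psi\in C_{c}^{2}(\Omega)$. The sub/supersolution cases are handled in the same way with the inequality signs carried along unchanged, and the solution case is obtained by applying the subsolution statement to both $U$ and $-U$.
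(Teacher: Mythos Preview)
Your proposal is correct and follows essentially the same mollification approach as the paper: regularize $U$ by $U_{\varepsilon}=U\ast\varrho_{\varepsilon}$, obtain \eqref{ger} and \eqref{gir} classically for $U_{\varepsilon},\Phi_{\varepsilon}$, and pass to the limit. The only cosmetic difference is that the paper handles the time-slice terms by first invoking convergence of $\int_{\Omega}U_{\varepsilon}(\cdot,\tau)\varphi(\cdot,\tau)\,dx$ for almost every $\tau$ and then extending to all $\tau$ by the continuity $U\in C((0,T);L^{1}_{loc}(\Omega))$, whereas you argue the convergence directly at every $\tau$ from that same continuity; both are fine.
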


\begin{proof}
The regularization gives the equation $(U_{\varepsilon})_{t}-\Delta
U_{\varepsilon}=\Phi_{\varepsilon},$ and the relations (\ref{ger}),
(\ref{gir}) hold for $U_{\varepsilon},\Phi_{\varepsilon},$ and for $U,\Phi$ as
$\varepsilon\rightarrow0.$ Indeed, $\int_{\Omega}U_{\varepsilon}%
(.,\tau)\varphi(.,\tau)dx$ converges to $\int_{\Omega}U(.,\tau)\varphi
(.,\tau)dx$ for almost any $\tau,$ see for example \cite{AS}, hence the
relations hold for any $s,\tau$ by continuity.\medskip
\end{proof}

Next we make precise our notion of solution of equation (\ref{un}).

\begin{definition}
(i) We say that a nonnegative function $u$ is a weak solution of equation
(\ref{un}) in $Q_{\Omega,T},$ if $u\in L_{loc}^{1}(Q_{\Omega,T}),|\nabla
u|^{q}\in L_{loc}^{1}(Q_{\Omega,T}),$ and $u$ is a weak solution of the
equation in the sense above:
\[
\int_{0}^{T}\int_{\Omega}(-u\varphi_{t}-u\Delta\varphi+|\nabla u|^{q}%
\varphi)dxdt=0,\qquad\forall\varphi\in\mathcal{D}(Q_{\Omega,T}).
\]

\noindent(ii) We say that $u$ is a weak solution of the Dirichlet problem
$(D_{\Omega,T})$ if it is a weak solution of (\ref{un}) in $Q_{\Omega,T},$
such that
\[
u\in L_{loc}^{1}((0,T);W_{0}^{1,1}(\Omega))\cap C((0,T);L^{1}(\Omega
)),\quad\text{and }|\nabla u|\in L_{loc}^{q}((0,T);L^{q}(\Omega)).
\]

\end{definition}

We first observe that the regularization keeps the subsolutions, which allow
to give local estimates:

\begin{lemma}
\label{subso}Let $u$ be a weak nonnegative subsolution of (\ref{un}) in
$Q_{\Omega,T}.$ Let $\omega$ be any domain $\omega\subset\subset\Omega$ and
$0<s<\tau<T$. Then for $\varepsilon$ small enough, $u_{\varepsilon}$ is a
subsolution of equation (\ref{un}) in $Q_{\omega,s,\tau}.$
\end{lemma}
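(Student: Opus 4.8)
\textbf{Proof proposal for Lemma \ref{subso}.}

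The plan is to exploit the fact that convolution with a nonnegative mollifier preserves the sign of distributional inequalities, so the only real work is controlling the nonlinear term $|\nabla u|^{q}$ under regularization. First I would note that since $u$ is a weak subsolution, we have $u_{t}-\Delta u\leqq-|\nabla u|^{q}$ in $\mathcal{D}^{\prime}(Q_{\Omega,T})$, and convolving with $\varrho_{\varepsilon}\geqq0$ gives, on $Q_{\omega,s,\tau}$ for $\varepsilon$ small enough,
\[
(u_{\varepsilon})_{t}-\Delta u_{\varepsilon}\leqq-\left(  |\nabla u|^{q}\right)  \ast\varrho_{\varepsilon}\qquad\text{in }Q_{\omega,s,\tau}.
\]
So it suffices to show that $\left(  |\nabla u|^{q}\right)  \ast\varrho_{\varepsilon}\geqq|\nabla u_{\varepsilon}|^{q}$ pointwise. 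Since $\nabla u_{\varepsilon}=(\nabla u)\ast\varrho_{\varepsilon}$ and $\varrho_{\varepsilon}$ is a probability density, this is exactly Jensen's inequality applied componentwise combined with convexity of $\xi\mapsto|\xi|^{q}$ on $\mathbb{R}^{N}$ (valid since $q>1$): writing $\nabla u_{\varepsilon}(y)=\int\nabla u(y-z)\,\varrho_{\varepsilon}(z)\,dz$, convexity yields $|\nabla u_{\varepsilon}(y)|^{q}\leqq\int|\nabla u(y-z)|^{q}\varrho_{\varepsilon}(z)\,dz=\left(  |\nabla u|^{q}\right)  \ast\varrho_{\varepsilon}(y)$. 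Combining the two displays gives $(u_{\varepsilon})_{t}-\Delta u_{\varepsilon}\leqq-|\nabla u_{\varepsilon}|^{q}$ classically on $Q_{\omega,s,\tau}$, i.e. $u_{\varepsilon}$ is a (smooth, hence weak) subsolution there.

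For this to make sense I need $u_{\varepsilon}$ and $\nabla u_{\varepsilon}$ to be well-defined and smooth on $Q_{\omega,s,\tau}$: this is the standard fact that convolution in $(x,t)\in\mathbb{R}^{N+1}$ of an $L_{loc}^{1}$ function with a mollifier of small support lands in $C^{\infty}$ on any compactly contained subcylinder, using $\omega\subset\subset\Omega$ and $0<s<\tau<T$ to leave room for the support of $\varrho_{\varepsilon}$; and $|\nabla u|^{q}\in L_{loc}^{1}$ by hypothesis so its mollification is also well-defined and smooth there. One should also check $\nabla(u\ast\varrho_{\varepsilon})=(\nabla u)\ast\varrho_{\varepsilon}$ in the distributional sense, which is immediate since differentiation commutes with convolution.

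I do not expect a genuine obstacle here; the lemma is essentially the observation that subsolutions of $(\ref{un})$ are stable under mollification because the nonlinearity is convex in the gradient. The only point requiring a line of care is the Jensen/convexity step producing $|\nabla u_{\varepsilon}|^{q}\leqq(|\nabla u|^{q})\ast\varrho_{\varepsilon}$, which is what makes the inequality go the right way — this is the reason the statement is about sub\emph{solutions} and not supersolutions. Everything else is routine regularization bookkeeping, and the conclusion is stated exactly so that it can be fed into the local universal estimates of the following subsection.
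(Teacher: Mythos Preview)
Your proof is correct and matches the paper's own argument essentially line for line: mollify the inequality to get $(u_{\varepsilon})_{t}-\Delta u_{\varepsilon}+|\nabla u|^{q}\ast\varrho_{\varepsilon}\leqq0$, then use $|\nabla u_{\varepsilon}|^{q}\leqq|\nabla u|^{q}\ast\varrho_{\varepsilon}$ to conclude. The paper phrases the key pointwise inequality as a consequence of the H\"{o}lder inequality (using that $\varrho_{\varepsilon}$ has mass $1$) while you invoke Jensen's inequality for the convex map $\xi\mapsto|\xi|^{q}$; these are two names for the same computation.
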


\begin{proof}
The function $u_{\varepsilon}$ satisfies
\[
(u_{\varepsilon})_{t}-\Delta u_{\varepsilon}+\left\vert \nabla u\right\vert
^{q}\ast\varrho_{\varepsilon}\leqq0,
\]
in $\mathcal{D}^{\prime}(Q_{\omega,s,\tau})$ for $\varepsilon$ small enough.
We find easily that
\begin{equation}
|\nabla u_{\varepsilon}|^{q}\leqq\left\vert \nabla u\right\vert ^{q}%
\ast\varrho_{\varepsilon}\qquad\text{in }Q_{\omega,s,\tau}, \label{kine}%
\end{equation}
from the H\"{o}lder inequality, since $\varrho_{\varepsilon}$ has a mass 1;
thus $|\nabla u_{\varepsilon}|^{q}\in L_{loc}^{1}(Q_{\omega,s,\tau})$ and
\begin{equation}
(u_{\varepsilon})_{t}-\Delta u_{\varepsilon}+|\nabla u_{\varepsilon}|^{q}%
\leqq0. \label{epsi}%
\end{equation}

\end{proof}

Next we recall some well known properties:

\begin{lemma}
\label{subcal}Any weak nonnegative solution of equation (\ref{un}) satisfies%
\begin{equation}
u\in L_{loc}^{\infty}(Q_{\Omega,T}),\qquad\nabla u\in L_{loc}^{2}(Q_{\Omega
,T}),\qquad u\in C((0,T);L_{loc}^{r}(\Omega)),\quad\forall r\geqq1.
\label{2.1}%
\end{equation}
As a consequence, it satisfies\medskip

\noindent(i) for any $\varphi\in C_{c}^{1}(Q_{\Omega,T}),$
\begin{equation}
\int_{0}^{T}\int_{\Omega}(-u\varphi_{t}+\nabla u.\nabla\varphi+|\nabla
u|^{q}\varphi)dxdt=0, \label{fur}%
\end{equation}
(ii) for any $s,\tau\in(0,T),$ and any $\varphi\in C^{1}((0,T);C_{c}%
^{1}\left(  \Omega\right)  ),$
\begin{equation}
\int_{\Omega}u(.,\tau)\varphi(.,\tau)dx-\int_{\Omega}u(.,s)\varphi
(.,s)dx+\int_{s}^{\tau}\int_{\Omega}(-u\varphi_{t}+\nabla u.\nabla
\varphi+|\nabla u|^{q}\varphi)dxdt=0 \label{ffr}%
\end{equation}
(iii) for any $s,\tau\in(0,T),$ and any $\psi\in$ $C_{c}^{1}\left(
\Omega\right)  ,$
\begin{equation}
\int_{\Omega}u(.,\tau)\psi dx-\int_{\Omega}u(.,s)\psi dx+\int_{s}^{\tau}%
\int_{\Omega}(\nabla u.\nabla\psi+|\nabla u|^{q}\psi)dxdt=0 \label{fgr}%
\end{equation}

\end{lemma}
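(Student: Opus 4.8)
The statement to prove is Lemma~\ref{subcal}: any weak nonnegative solution $u$ of~(\ref{un}) satisfies the regularity~(\ref{2.1}) and consequently the integral identities~(\ref{fur})--(\ref{fgr}). My plan is to proceed in two stages: first establish local boundedness and local gradient integrability, then upgrade the test-function class from $\mathcal{D}(Q_{\Omega,T})$ to the larger classes in (i)--(iii) by an approximation argument.

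For the first stage, the key point is that $u$ is a nonnegative \emph{subsolution} of the heat equation, since $-\Delta u + u_t = -|\nabla u|^q \leq 0$ in $\mathcal{D}'(Q_{\Omega,T})$. Standard parabolic regularity for subsolutions of the heat equation (Moser-type iteration, or the mean-value inequality for caloric subsolutions applied to backward-in-time cylinders) then gives $u \in L^\infty_{loc}(Q_{\Omega,T})$; concretely, I would use the mollification from Lemma~\ref{subso}, note that $u_\varepsilon$ is a bounded smooth subsolution of~(\ref{un}) on $Q_{\omega,s,\tau}$, apply the classical local sup-bound $\sup_{Q'} u_\varepsilon \leq C \int_{Q} u_\varepsilon\, dx\,dt$ on nested cylinders, and pass to the limit $\varepsilon\to 0$, using $u_\varepsilon \to u$ in $L^1_{loc}$. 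Once $u$ is known to be locally bounded, $|\nabla u|^q \in L^1_{loc}$ upgrades easily: testing the equation with $\varphi = u\,\xi^2$ for a cutoff $\xi$ (after mollification, to make this legitimate) yields the energy estimate
\[
\int \xi^2 |\nabla u_\varepsilon|^2 \,dx\,dt \leq C\!\left(\sup u_\varepsilon\right)^2 \int |\nabla \xi|^2 + \ldots,
\]
so $\nabla u \in L^2_{loc}(Q_{\Omega,T})$. With $u$ bounded and $\nabla u \in L^2_{loc}$, the equation reads $u_t - \Delta u = -|\nabla u|^q \in L^{2/q}_{loc} \subset L^1_{loc}$, and then classical $L^p$ parabolic regularity (or the variation-of-constants representation against the heat kernel together with the continuity of the heat semigroup) gives $u \in C((0,T); L^r_{loc}(\Omega))$ for every $r\geq 1$; in fact one obtains $u \in C((0,T);L^\infty_{loc})$ but the stated $L^r_{loc}$ continuity suffices.

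For the second stage, the identities (i)--(iii) follow from the distributional formulation by density, now that we have enough regularity. Identity~(\ref{fur}) is obtained from the definition by integrating the term $-u\,\Delta\varphi$ by parts in $x$, which is legitimate because $\nabla u \in L^2_{loc}$ and we may test against $C^1_c$ functions by approximating them in the appropriate topology by $\mathcal{D}(Q_{\Omega,T})$ functions. Identity~(\ref{ffr}) comes from~(\ref{fur}) (or directly) by choosing test functions that are no longer compactly supported in $t$: one takes $\varphi$ times a smooth cutoff $\chi_\delta(t)$ that is $1$ on $[s+\delta,\tau-\delta]$ and vanishes near $0$ and $T$, differentiates, and uses the $C((0,T);L^1_{loc})$-continuity established in stage one to pass to the limit $\delta\to 0$ in the boundary terms $\int_\Omega u(\cdot,\tau)\varphi(\cdot,\tau)\,dx$ and $\int_\Omega u(\cdot,s)\varphi(\cdot,s)\,dx$ — this is exactly the mechanism already used in the proof of Lemma~\ref{appro}. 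Finally~(\ref{fgr}) is the special case of~(\ref{ffr}) with $\varphi = \psi(x)$ independent of $t$, so $\varphi_t = 0$.

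The main obstacle is the very first step: justifying the local $L^\infty$ bound rigorously for a merely distributional solution whose only a priori structure is nonnegativity and $|\nabla u|^q \in L^1_{loc}$. The clean route is through the mollification of Lemma~\ref{subso}: there $u_\varepsilon$ is a genuine classical subsolution of the heat equation to which textbook parabolic estimates apply uniformly in $\varepsilon$, and the $L^1_{loc}$ convergence $u_\varepsilon \to u$ transfers the bound to $u$. Everything downstream — the energy estimate for $\nabla u$, the $L^p$-theory giving time-continuity, and the density arguments for the integral identities — is then routine, and the handling of the non-compactly-supported-in-time test functions is already modeled by Lemma~\ref{appro}.
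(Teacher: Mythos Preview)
Your proposal is correct and follows essentially the same route as the paper: local boundedness from the subcaloric property via mollification (the paper cites \cite{BrFr} for this step), the $L^2_{loc}$ gradient bound from the energy estimate obtained by testing the mollified equation against $u_\varepsilon\psi^2$, and then the integral identities (i)--(iii) by density and time-cutoff exactly as in Lemma~\ref{appro}.

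The one place your argument departs from the paper is the time-continuity step. You invoke ``classical $L^p$ parabolic regularity'' via the observation $|\nabla u|^q\in L^{2/q}_{loc}$, but this only yields an exponent $>1$ when $q<2$; for $q\geqq 2$ the right-hand side is a priori only in $L^1_{loc}$, and standard maximal regularity does not apply. The paper instead observes that $u\in L^2_{loc}((0,T);W^{1,2}_{loc}(\Omega))$ with $u_t\in L^2_{loc}((0,T);W^{-1,2}(\Omega))+L^1_{loc}(Q_{\Omega,T})$ (the $\Delta u$ part lands in $W^{-1,2}$, the absorption in $L^1$), and appeals to a local version of Porretta's embedding \cite[Theorem 1.1]{Po}, which gives $u\in C((0,T);L^1_{loc}(\Omega))$ directly for all $q>1$. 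The $L^r_{loc}$ continuity for $r>1$ then follows by interpolation with the local boundedness, as you also note. This is a cleaner and more uniform route than parabolic $L^p$ theory; your Duhamel alternative could be made to work but needs localisation and the same care with $L^1$ forcing.
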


\begin{proof}
The function $u\in L_{loc}^{1}(Q_{\Omega,T})$ is nonnegative and subcaloric,
then regularizing $u$ by $u_{\varepsilon},$ we get $u\in L_{loc}^{\infty
}(Q_{\Omega,T}),$ see for example \cite{BrFr}. Otherwise for any domains
$\omega\subset\subset\omega^{\prime}\subset\subset\Omega,$ taking $\psi\in$
$C_{c}^{1}\left(  \Omega\right)  $ with support in $\omega^{\prime}$ such that
$\psi\equiv1$ on $\omega$, $\psi\left(  \Omega\right)  \subset\left[
0,1\right]  ,$ we find
\begin{align*}
&  \int_{\Omega}u_{\varepsilon}^{2}(.,\tau)\psi^{2}dx-\int_{\Omega
}u_{\varepsilon}^{2}(.,s)\psi^{2}dx+\int_{s}^{\tau}\int_{\Omega}\left\vert
\nabla u_{\varepsilon}\right\vert ^{2}\psi^{2}dx\\
&  \leqq2\int_{s}^{\tau}\int_{\Omega}u_{\varepsilon}\left\vert \nabla
u_{\varepsilon}\right\vert \left\vert \nabla\psi\right\vert dx\leqq\frac{1}%
{2}\int_{s}^{\tau}\int_{\Omega}\left\vert \nabla u_{\varepsilon}\right\vert
^{2}\psi^{2}dx+4\int_{s}^{\tau}\int_{\Omega}u_{\varepsilon}^{2}\left\vert
\nabla\psi\right\vert ^{2}dx;
\end{align*}
hence $\nabla u\in L_{loc}^{2}(Q_{\Omega,T})$ from the Fatou Lemma, and
\begin{equation}
\left\Vert \nabla u\right\Vert _{L^{2}(Q_{\omega,s,\tau})}\leqq C(\left\Vert
u(.,s)\right\Vert _{L^{2}(Q_{\omega^{\prime},s,\tau})}+\left\Vert u\right\Vert
_{L^{2}(Q_{\omega^{\prime},s,\tau})})\leqq C\left\Vert u\right\Vert
_{L^{\infty}(Q_{\omega^{\prime},s,\tau})}, \label{pre}%
\end{equation}
with $C=C(N,\omega,\omega^{\prime})$. Then (\ref{fur}) holds for any
$\varphi\in\mathcal{D}(Q_{\Omega,T}).$ Moreover, since $|\nabla u|^{q}\in
L_{loc}^{1}(Q_{\Omega,T}),$ the function $u$ lies in the set
\begin{equation}
E=\left\{  v\in L_{loc}^{2}((0,T);W_{loc}^{1,2}(\Omega)):v_{t}\in L_{loc}%
^{2}((0,T);W^{-1,2}(\Omega))+L_{loc}^{1}\left(  Q_{\Omega,T}\right)  \right\}
\label{eu}%
\end{equation}
From a local version of \cite[Theorem 1.1]{Po}, we have $E\subset
C((0,T);L_{loc}^{1}(\Omega)).$ Then (\ref{ffr}) and (\ref{fgr}) follow.
Moreover $u\in L_{loc}^{\infty}(Q_{\Omega,T}),$ then $u\in C((0,T);L_{loc}%
^{r}(\Omega))$ for any $r>1.$\medskip
\end{proof}

In the case of the Dirichlet problem $(D_{\Omega,T}),$ the regularization does
not provide estimates up to the boundary, thus we use another argument: the
notion of \textit{entropy solution} that we recall now. For any $k>0$ and
$r\in\mathbb{R},$ we define as usual $T_{k}(r)=\max(-k,\min(k,r))$ the
truncation function, and $\Theta_{k}(r)=\int_{0}^{r}T_{k}(s)ds.$

\begin{definition}
Let $s<\tau,$ and $f\in L^{1}(Q_{\Omega,s,\tau})$ and $u_{s}\in L^{1}%
(\Omega).$ A function $u$ is an entropy solution of the problem%
\begin{equation}
\left\{
\begin{array}
[c]{ccc}%
u_{t}-\Delta u & =f & \text{in }Q_{\Omega,s,\tau},\\
u & =0 & \text{on }\left(  s,\tau\right)  \times\partial\Omega,\\
u(.,s) & =u_{s} & \text{in }\Omega,
\end{array}
\right.  \label{fss}%
\end{equation}
if $u\in C(\left[  s,\tau\right]  ;L^{1}(\Omega)),$ and $T_{k}(u)\in
L^{2}(\left(  s,\tau\right)  ;W_{0}^{1,2}(\Omega))$ for any $k>0,$ and%
\begin{align*}
&  \int_{\Omega}\Theta_{k}(u-\varphi)(.,\tau)dx+\int_{s}^{\tau}\langle
\varphi_{t},T_{k}(u-\varphi)\rangle dt+\int_{s}^{\tau}\int_{\Omega}\nabla
u.\nabla T_{k}(u-\varphi)dxdt\\
&  =\int_{\Omega}\Theta_{k}(u_{s}-\varphi(.,0))dx+\int_{s}^{\tau}\int_{\Omega
}fT_{k}(u-\varphi)dxdt
\end{align*}
for any $\varphi\in L^{2}((s,\tau);W^{1,2}(\Omega))\cap L^{\infty}\left(
Q_{\Omega,\tau}\right)  $ such that $\varphi_{t}\in L^{2}((s,\tau
);W^{-1,2}(\Omega))$.\medskip
\end{definition}

As a consequence, we identify three ways of defining solutions:

\begin{lemma}
\label{ent}Let $0\leqq s<\tau\leqq T,$ and $f\in L^{1}(Q_{\Omega,s,\tau})$ and
$u\in C(\left[  s,\tau\right)  ;L^{1}\left(  \Omega\right)  ),$ $u_{s}=u(s).$
Denoting by $e^{t\Delta}$ the semi-group of the heat equation with Dirichlet
conditions acting on $L^{1}\left(  \Omega\right)  ,$ the three properties are
equivalent:\medskip

\noindent(i) $u\in L_{loc}^{1}((s,\tau);W_{0}^{1,1}\left(  \Omega\right)  )$
and $u_{t}-\Delta u=f,\qquad$in$\hspace{0.05in}\mathcal{D}^{\prime}%
(Q_{\Omega,s,\tau}),$\medskip

\noindent(ii) $u$ is an entropy solution of problem (\ref{fss}) in
$Q_{\Omega,s,\tau},$

\noindent(iii)
\[
u(.,t)=e^{(t-s)\Delta}u_{s}+\int_{s}^{t}e^{(t-\sigma)\Delta}f(\sigma
)d\sigma\qquad\text{in }L^{1}\left(  \Omega\right)  ,\quad\forall t\in\left[
s,\tau\right]  .
\]

\end{lemma}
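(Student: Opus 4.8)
The plan is to prove the equivalence (i)$\Leftrightarrow$(ii)$\Leftrightarrow$(iii) by establishing the cycle (iii)$\Rightarrow$(i)$\Rightarrow$(ii)$\Rightarrow$(iii), using the standard machinery for $L^1$-data parabolic problems together with the assumption $u\in C([s,\tau);L^1(\Omega))$ built into all three statements.

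\medskip

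\textbf{Step 1: (iii)$\Rightarrow$(i).} Assume $u$ is given by the Duhamel/variation-of-constants formula. Since $f\in L^1(Q_{\Omega,s,\tau})$ and $u_s\in L^1(\Omega)$, the classical $L^1$-theory for the heat semigroup with Dirichlet conditions (e.g.\ \`a la B\'enilan--Brezis--Crandall, or via the regularization $u_\varepsilon$ of the preceding Notation) gives that $u\in L^1_{loc}((s,\tau);W_0^{1,1}(\Omega))$ with $T_k(u)\in L^2((s,\tau);W_0^{1,2}(\Omega))$, and that $u$ solves $u_t-\Delta u=f$ in $\mathcal D'(Q_{\Omega,s,\tau})$. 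One can do this cleanly by approximating $f$ and $u_s$ in $L^1$ by smooth data, for which the three formulations are classical and coincide, and passing to the limit using the contraction property of $e^{t\Delta}$ in $L^1$; the limit belongs to $C([s,\tau);L^1(\Omega))$ by uniform Cauchy estimates, and the $W_0^{1,1}$-regularity follows from the parabolic Gagliardo--Nirenberg / Aronson--Serrin type estimates already invoked via \cite{Po} in Lemma~\ref{subcal}.

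\medskip

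\textbf{Step 2: (i)$\Rightarrow$(ii).} This is the heart of the argument: showing a distributional solution with $W_0^{1,1}$-regularity and the continuity $u\in C([s,\tau);L^1(\Omega))$ is in fact an entropy solution. The standard technique is to use $T_k(u-\varphi)$ as a test function. This is not directly admissible in (i) because it is not smooth and $u$ is only $W_0^{1,1}$, so I would first regularize $u$ in time (Steklov averages) and truncate, obtaining an approximate identity, then pass to the limit: the term $\int_s^\tau\int_\Omega \nabla u\cdot\nabla T_k(u-\varphi)$ makes sense because $T_k(u-\varphi)\in L^2((s,\tau);W_0^{1,2})$ (since $T_k(u)$ is, and $\varphi$ is admissible), the time term produces $\int_\Omega\Theta_k(u-\varphi)(\cdot,\tau)\,dx - \int_\Omega\Theta_k(u_s-\varphi(\cdot,s))\,dx + \int_s^\tau\langle\varphi_t,T_k(u-\varphi)\rangle$ after integration by parts in $t$ and using the chain rule for $\Theta_k$, and the boundary contributions vanish because both $u$ and $\varphi$'s truncated difference lie in $W_0^{1,2}$. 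The continuity of $u$ in $L^1$ is exactly what lets us evaluate the boundary-in-time terms at $t=s$ and $t=\tau$.

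\medskip

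\textbf{Step 3: (ii)$\Rightarrow$(iii).} Here I would invoke uniqueness of the entropy solution for $L^1$-data (the by-now classical result for the problem $u_t-\Delta u=f$, $u|_{\partial\Omega}=0$, $u(s)=u_s$): the entropy solution is unique. On the other hand the function defined by the Duhamel formula in (iii) is an entropy solution — this follows from (iii)$\Rightarrow$(i) combined with (i)$\Rightarrow$(ii) already proved. Hence $u$ coincides with it in $C([s,\tau);L^1(\Omega))$, which is (iii). Alternatively one proves (ii)$\Rightarrow$(iii) directly by comparison/contraction estimates on $T_k(u - e^{(t-s)\Delta}u_s - \int_s^t e^{(t-\sigma)\Delta}f)$.

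\medskip

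\textbf{Main obstacle.} The delicate point is Step~2, the admissibility of $T_k(u-\varphi)$ as a test function when $u$ only has $W_0^{1,1}$ spatial regularity and merely $L^1+W^{-1,2}$ time derivative: one must justify the time-integration by parts producing the $\Theta_k$ boundary terms and the pairing $\langle\varphi_t,T_k(u-\varphi)\rangle$ without assuming more regularity on $u_t$ than available. The clean route is a double approximation — mollify $u$ in time by Steklov averaging to get $u_h$ with $\partial_t u_h\in L^1_{loc}$, write the identity for $u_h$ against $T_k(u_h-\varphi)$, then let $h\to0$ using that $T_k(u)\in L^2((s,\tau);W_0^{1,2})$ controls the gradient term and that $u\in C([s,\tau);L^1)$ controls the endpoints — which is precisely the hypothesis imposed in the statement. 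Everything else is routine $L^1$-semigroup theory already implicit in the earlier lemmas.
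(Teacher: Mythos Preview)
Your cycle (iii)$\Rightarrow$(i)$\Rightarrow$(ii)$\Rightarrow$(iii) is correct and the delicate point (Step~2) is properly identified. The paper's own proof, however, takes a much shorter route: it simply invokes the existence and uniqueness results already available in the literature for each formulation---Baras--Pierre \cite{BaPi} for (i), Andreu--Maz\'on--Segura de Le\'on--Toledo \cite{AMST} and Prignet \cite{Pri} for entropy solutions (ii), and standard semigroup theory for (iii)---and concludes that since the solution built in any one of these frameworks (by smooth approximation) satisfies the others, uniqueness forces them to coincide. Your approach is more self-contained and makes the mechanism transparent, at the cost of having to carry out the Steklov-average argument; the paper's approach offloads all of that to the cited references.

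One small point worth tightening: in Step~2 you use $T_k(u)\in L^2((s,\tau);W_0^{1,2}(\Omega))$ as if it were part of hypothesis (i), but (i) only gives $u\in L^1_{loc}((s,\tau);W_0^{1,1}(\Omega))$. The $L^2$-energy bound on $T_k(u)$ is not automatic from that regularity alone; it comes either from the formal test with $T_k(u)$ justified a posteriori, or---more cleanly---from the Baras--Pierre uniqueness combined with approximation by smooth data (for which the $T_k$-energy estimate is elementary). That is precisely the content of the citations the paper relies on, so your argument and the paper's ultimately rest on the same ingredients.
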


\begin{proof}
It follows from the existence and uniqueness of the solutions of (i) from
\cite[Lemma 3.4]{BaPi}, as noticed in \cite{BeDa}, and of the entropy
solutions, see \cite{AMST}, \cite{Pri}.\medskip
\end{proof}

We deduce properties of all the \textit{bounded} solutions $u$ of
$(D_{\Omega,T}):$

\begin{lemma}
\label{subdir}Any nonnegative weak solution of problem $(D_{\Omega,T})$, such
that $u\in L_{loc}^{\infty}((0,T);L^{\infty}\left(  \Omega\right)  )$
satisfies $\nabla u\in L_{loc}^{2}(0,T);L^{2}\left(  \Omega\right)  )$ and
$u\in C((0,T);L^{r}(\Omega))$ for any $r\geqq1.$
\end{lemma}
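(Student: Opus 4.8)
\textbf{Proof plan for Lemma \ref{subdir}.}
The plan is to reduce everything to the already-proved facts about the local regularity of weak solutions of (\ref{un}) together with Lemma \ref{ent}. First I would fix domains and times: let $0<s<\tau<T$. Since $u$ is a weak solution of $(D_{\Omega,T})$, it is in particular a weak solution of (\ref{un}) in $Q_{\Omega,T}$, so Lemma \ref{subcal} already gives $\nabla u\in L_{loc}^{2}(Q_{\Omega,T})$ and $u\in C((0,T);L_{loc}^{r}(\Omega))$ for every $r\geqq1$. The only new point is to upgrade these to global-in-space statements on $\Omega$, using the hypothesis $u\in L_{loc}^{\infty}((0,T);L^{\infty}(\Omega))$ and the Dirichlet boundary condition $T_{k}(u)\in L^{2}((s,\tau);W_{0}^{1,2}(\Omega))$ built into the definition of weak solution of $(D_{\Omega,T})$ (here, since $u$ is bounded, $T_{k}(u)=u$ for $k$ large, so already $u\in L_{loc}^{2}((0,T);W_{0}^{1,2}(\Omega))$).

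The key energy estimate is obtained exactly as in the proof of Lemma \ref{subcal}, but now with $\psi\equiv1$ replaced by a cutoff $\psi(t)$ in time only, so as not to lose the boundary condition. Fix $s,\tau\in(0,T)$ and a Lipschitz function $\eta:[0,T]\to[0,1]$ with $\eta\equiv0$ near $0$ and $\eta\equiv1$ on $[s,\tau]$. Testing the equation $u_{t}-\Delta u=-|\nabla u|^{q}$ against $\eta^{2}u\in L^{2}((0,\tau);W_{0}^{1,2}(\Omega))$ — which is legitimate since $u$ is bounded, $u\in L^{2}_{loc}((0,T);W_{0}^{1,2}(\Omega))$, and $u_{t}\in L^{2}_{loc}((0,T);W^{-1,2}(\Omega))+L^{1}_{loc}(Q_{\Omega,T})$ by $|\nabla u|^{q}\in L^{1}_{loc}$ — gives
\begin{align*}
\tfrac12\int_{\Omega}\eta^{2}(\tau)u^{2}(.,\tau)dx+\int_{0}^{\tau}\int_{\Omega}\eta^{2}|\nabla u|^{2}dxdt
&=\int_{0}^{\tau}\int_{\Omega}\eta\eta' u^{2}dxdt-\int_{0}^{\tau}\int_{\Omega}\eta^{2}u|\nabla u|^{q}dxdt.
\end{align*}
Both terms on the right are controlled by $\|u\|_{L^{\infty}(Q_{\Omega,0,\tau})}$ and the already-known local integrability of $|\nabla u|^{q}$ and $|\nabla u|^{2}$ near $\partial\Omega$ — actually the last term is nonpositive, and the first is bounded by $\|\eta'\|_{\infty}\|u\|_{L^{\infty}}^{2}|\Omega|\,\tau$. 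Hence $\nabla u\in L^{2}((s,\tau);L^{2}(\Omega))$, which is the first claim. I would be a little careful here: to make the test-function computation rigorous one regularizes in time (or uses Steklov averages / the Gagliardo–Nirenberg-type argument of Lemma \ref{subcal} with the spatial cutoff tending to $1$ and invoking Fatou), and one uses that $T_{k}(u)\in L^{2}((s,\tau);W^{1,2}_{0})$ already encodes the vanishing trace so no boundary term arises in the integration by parts.

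For the continuity statement, once $\nabla u\in L^{2}((s,\tau);L^{2}(\Omega))$ and $u\in L^{\infty}((s,\tau);L^{\infty}(\Omega))$ for all $s<\tau$, the function $f:=-|\nabla u|^{q}$ lies in $L^{1}(Q_{\Omega,s,\tau})$ and $u\in L^{1}_{loc}((s,\tau);W^{1,1}_{0}(\Omega))$, so Lemma \ref{ent} applies: $u$ coincides with the mild solution $u(.,t)=e^{(t-s)\Delta}u(.,s)+\int_{s}^{t}e^{(t-\sigma)\Delta}f(\sigma)d\sigma$ in $L^{1}(\Omega)$ on $[s,\tau]$, which is by construction in $C([s,\tau];L^{1}(\Omega))$; since $s<\tau<T$ were arbitrary, $u\in C((0,T);L^{1}(\Omega))$. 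Finally, interpolating $L^{1}(\Omega)$-continuity with the uniform bound $u\in L^{\infty}_{loc}((0,T);L^{\infty}(\Omega))$ gives $u\in C((0,T);L^{r}(\Omega))$ for every $r\geqq1$, as claimed. The main obstacle is the first step — justifying the energy identity up to the boundary with the correct handling of the time cutoff and of the fact that $u_{t}$ is only a sum of a $W^{-1,2}$-valued and an $L^{1}$ term — but this is exactly the setting already used for (\ref{eu}) and Lemma \ref{ent}, so it is a routine (if delicate) adaptation rather than a genuinely new difficulty.
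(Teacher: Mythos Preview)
Your approach is correct but takes a detour the paper avoids, and contains a misattribution worth flagging.

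First, $u\in C((0,T);L^{1}(\Omega))$ is already part of the \emph{definition} of weak solution of $(D_{\Omega,T})$, so the entire second half of your argument (invoking Lemma \ref{ent} to deduce $L^{1}$-continuity via the mild formulation) is redundant. Only the final interpolation step is needed: $L^{1}$-continuity plus the assumed $L^{\infty}$ bound give $L^{r}$-continuity at once.

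Second, the property $T_{k}(u)\in L^{2}((s,\tau);W_{0}^{1,2}(\Omega))$ is \emph{not} built into the definition of weak solution of $(D_{\Omega,T})$; that definition only provides $u\in L_{loc}^{1}((0,T);W_{0}^{1,1}(\Omega))$. The truncation property belongs to the definition of \emph{entropy} solution, and it is Lemma \ref{ent} --- applied on $[s,\tau]$ with $f=-|\nabla u|^{q}\in L^{1}(Q_{\Omega,s,\tau})$ and $u_{s}=u(.,s)$, using the $C((0,T);L^{1}(\Omega))$ hypothesis already in the definition --- that yields the entropy-solution characterization. This is exactly how the paper argues: once $u$ is an entropy solution and bounded, $u=T_{k}(u)\in L^{2}((s,\tau);W_{0}^{1,2}(\Omega))$ for $k$ large, and that \emph{is} the statement $\nabla u\in L_{loc}^{2}((0,T);L^{2}(\Omega))$. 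Your time-cutoff energy computation is therefore unnecessary: the $W_{0}^{1,2}$-membership is the conclusion itself, not an intermediate step requiring a further test-function argument. (The paper does write down the resulting energy identity, but as a consequence, not as the mechanism.)

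In short, the paper's proof is essentially the streamlined core of yours: invoke Lemma \ref{ent} once at the start (not at the end), read off $u\in L^{2}((s,\tau);W_{0}^{1,2}(\Omega))$ from boundedness, and interpolate for $L^{r}$-continuity.
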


\begin{proof}
Since $u\in C((0,T);L^{1}(\Omega)),$ for any $0<s<\tau<T,$ $u$ is an entropy
solution on $\left[  s,\tau\right]  $ from Lemma \ref{ent}. Since $u$ is
bounded, it follows that $u=T_{k}(u)\in L^{2}(\left(  s,\tau\right)
;W_{0}^{1,2}(\Omega)),$ and
\[
\int_{\Omega}u^{2}(.,\tau)dx-\int_{\Omega}u^{2}(.,s)dx+\int_{s}^{\tau}%
\int_{\Omega}\left\vert \nabla u\right\vert ^{2}dx+\int_{s}^{\tau}\int
_{\Omega}u|\nabla u|^{q}dxdt=0;
\]
and $u\in C((0,T);L^{r}(\Omega))$ as in Lemma \ref{subcal}.
\end{proof}

\subsection{Estimates of the classical solutions of the Dirichlet problem}

First recall some results on the Dirichlet problem in a bounded domain
$\Omega$ with regular initial and boundary data%

\begin{equation}
\left\{
\begin{array}
[c]{l}%
u_{t}-\Delta u+|\nabla u|^{q}=0,\quad\text{in}\hspace{0.05in}Q_{\Omega,T},\\
u=\varphi,\quad\text{on}\hspace{0.05in}\partial\Omega\times(0,T),\\
u(x,0)=u_{0}\geqq0.
\end{array}
\right.  \label{pqr}%
\end{equation}
If $\varphi\equiv0$ and $u_{0}\in C_{0}^{1}\left(  \overline{\Omega}\right)
$, it is well known that problem (\ref{pqr}) admits a unique solution $u\in
C^{2,1}\left(  Q_{\Omega,\infty}\right)  \cap$ $C\left(  \overline{\Omega
}\times\left[  0,\infty\right)  \right)  $ such that $|\nabla u|\in C\left(
\overline{\Omega}\times\left[  0,\infty\right)  \right)  .$ For general
$\varphi\in C(\partial\Omega\times\left[  0,T\right]  ),$ the same happens on
$\left[  0,T\right)  $ if $u_{0}\in C^{1}(\overline{\Omega}),$ and
$u_{0}(x)=\varphi(x,0)$ on $\partial\Omega$. If one only assumes $u_{0}\in
C(\overline{\Omega}),$ there exist a unique solution $u\in C(\overline{\Omega
}\times\left[  0,T\right]  )$ in the viscosity sense, see \cite{BaDLi}, but
$\left\vert \nabla u\right\vert $ may have a blow-up near $\partial\Omega$
when $q>2$.\medskip

Some fundamental universal estimates have been obtained in \cite{CLS}:

\begin{theorem}
[\cite{CLS}]\label{apri}Let $\Omega$ be any smooth bounded domain. Let $q>1,$
and $u_{0}\in C_{0}\left(  \overline{\Omega}\right)  $ be Lipschitz
continuous. Let $u$ be the classical solution of (\ref{pqr}) with $\varphi=0.$
Then there exist functions $B,D\in C((0,\infty))$ depending only of
$N,q,\Omega$, such that such that, for any $t\in(0,T),$%
\begin{equation}
\Vert u(.,t)\Vert_{L^{\infty}(\Omega)}\leqq B(t)d(x,\partial\Omega),
\label{wi}%
\end{equation}%
\begin{equation}
\Vert\nabla u(.,t)\Vert_{L^{\infty}(\Omega)}\leqq D(t). \label{univ}%
\end{equation}

\end{theorem}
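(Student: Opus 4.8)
The plan is to reduce the Dirichlet problem $(D_{\Omega,T})$ with Lipschitz initial data and zero boundary values to an application of the comparison principle, using suitable barriers that are independent of $u_0$. First I would recall that under the stated hypotheses the solution $u$ is classical, $u\in C^{2,1}(Q_{\Omega,T})\cap C(\overline\Omega\times[0,T])$ with $|\nabla u|$ continuous up to the boundary, so that the maximum principle applies in its classical form; this is the regularity quoted just above the statement. The key point is that we want estimates depending only on $N,q,\Omega$ and $t$, \emph{not} on $\|u_0\|_{L^\infty}$ or $\|\nabla u_0\|_{L^\infty}$, which is the phenomenon of regularizing/universal bounds.

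For the gradient bound \eqref{univ}, the natural route is a Bernstein-type argument: set $v=|\nabla u|^2$ and compute its parabolic equation, exploiting the damping term coming from $|\nabla u|^q$ with $q>1$. One finds that $v$ satisfies a differential inequality of the form $v_t-\Delta v + 2|D^2u|^2 + (\text{positive terms from }|\nabla u|^q)\le 0$ in the interior, while on the lateral boundary $\partial\Omega\times(0,T)$ the gradient is controlled by the Lipschitz regularity of $\partial\Omega$ together with the boundary condition $u=0$ and the sign of $u$ (Hopf-type estimate). The superlinear dissipation lets one absorb the initial-time dependence: comparing $v$ with an ODE solution $y(t)$ blowing up as $t\to 0^+$ but finite for $t>0$, one obtains $\|\nabla u(\cdot,t)\|_{L^\infty}\le D(t)$ with $D$ universal. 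Alternatively, and perhaps cleaner for citation purposes, I would simply quote the construction in \cite{CLS} directly, since the statement is labelled as coming from there; the role of the present lemma is to record it in the exact form we need.

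For the pointwise decay-near-the-boundary estimate \eqref{wi}, the plan is to build a supersolution of \eqref{pqr} of the separated form $w(x,t)=B(t)\,\delta(x)$ (or a smoothed version of the distance function $\delta(x)=d(x,\partial\Omega)$, extended to a smooth positive function in the interior), with $B$ chosen so that $w_t-\Delta w+|\nabla w|^q\ge 0$. Since $\delta$ is smooth near $\partial\Omega$ with $|\nabla\delta|=1$ there, the term $|\nabla w|^q=B(t)^q$ and $-\Delta w = -B(t)\Delta\delta$ are both controlled, and one needs $B'(t)\delta + B(t)^q \ge B(t)|\Delta\delta|$, which — because of the superlinear power $B^q$ with $q>1$ — admits a solution $B\in C((0,\infty))$ that is finite for every $t>0$ regardless of how large $u_0$ is (the nonlinearity again kills the dependence on the data; this is the same mechanism as in the classical Keller–Osserman / Barenblatt-type a priori bounds for the Hamilton–Jacobi term). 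On $\partial\Omega$ we have $w=0=u$, and at $t=0^+$ we have $w\to\infty$ uniformly on compact subsets of $\Omega$, so $w\ge u_0$ near $t=0$; the comparison principle for the classical problem then gives $u\le w$ throughout $Q_{\Omega,T}$, which is \eqref{wi}.

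The main obstacle is making the barriers genuinely universal while keeping them valid \emph{up to} the boundary, where $q>2$ causes the gradient term to be the dominant (and potentially singular) contribution; one must choose $B(t)$ carefully near $t=0$ and verify the supersolution inequality uniformly in a neighbourhood of $\partial\Omega$ where $\delta$ is smooth, patching with an interior construction. A secondary technical point is the boundary gradient estimate in \eqref{univ} when $q>2$: there $|\nabla u|$ need not be globally small, but for Lipschitz $u_0$ with $u_0=0$ on $\partial\Omega$ it stays bounded on $[0,T]$, and the interior Bernstein estimate combined with the already-established \eqref{wi} (which controls $u$ near $\partial\Omega$) yields the universal bound $D(t)$. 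Since all of this is precisely the content of \cite{CLS}, in the write-up I would present the barrier construction for \eqref{wi} and then invoke \cite{CLS} for the gradient estimate \eqref{univ}, noting only the adjustments needed to state it with the regularity available here.
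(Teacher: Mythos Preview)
The paper does not prove this theorem at all: it is stated as a citation from \cite{CLS}, with no proof given. Your sketch is consistent with the method of \cite{CLS}, and in fact the barrier construction you outline for \eqref{wi} is essentially the one the paper reproduces immediately afterwards in the proof of Lemma~\ref{mono} (functions $w_z(x,t)=J(t)b_z(x)$ with $b_z$ comparable to $d(\cdot,\partial\Omega)$ and $J(t)=C(\mathrm{Arctan}\,t)^{-1/(q-1)}$); you also correctly anticipate that simply citing \cite{CLS} is the intended route here.
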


In the following Lemma, we extend and make precise estimate (\ref{wi}), with
nonzero data on the lateral boundary$:$

\begin{lemma}
\label{mono}Let $\Omega$ be any smooth bounded domain. Let $q>1.$ Let $u\in
C(\overline{\Omega}\times(0,T))\cap C^{2,1}(Q_{\Omega,T})$ be a nonnegative
solution of equation (\ref{un}) in $Q_{\Omega,T},$ bounded on $\partial
\Omega\times(0,T).$ Then there is a constant $C=C(N,q,\Omega)$ such that for
any $\forall t\in(0,T),$
\begin{equation}
\Vert u(.,t)\Vert_{L^{\infty}(\Omega)}\leqq C(1+t^{-\frac{1}{q-1}%
})d(x,\partial\Omega)+\sup_{\partial\Omega\times(0,T)}u,. \label{wa}%
\end{equation}

\end{lemma}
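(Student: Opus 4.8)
The plan is to bound $u$ from above by an explicit supersolution that captures both the boundary behaviour and the expected $t^{-1/(q-1)}$ blow-up rate near $t=0$. First I would recall the classical "separable" supersolution of the equation without source term: since $q>1$, the function $\sigma(t)=c_q t^{-1/(q-1)}$ with a suitable constant $c_q$ satisfies $\sigma'+\text{(gradient term of size }0)=0$, but more usefully, for the Hamilton–Jacobi equation one looks for a supersolution of the form $V(x,t)=A\,t^{-1/(q-1)}\,\rho(x)$ where $\rho$ is a smooth positive function comparable to $d(x,\partial\Omega)$ (for instance a regularized distance, or the torsion function, or the first Dirichlet eigenfunction). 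The point of the factor $\rho$ vanishing on $\partial\Omega$ is to match estimate (\ref{wi}): near the boundary $u$ is itself controlled by $d(x,\partial\Omega)$ times a time factor, and we want the comparison function to have the same qualitative shape so the constant does not degenerate.

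Concretely, I would set $w(x,t)=\big(C_1+C_2 t^{-1/(q-1)}\big)\,\psi(x)+M$, where $\psi\in C^2(\overline\Omega)$ is positive in $\Omega$, vanishes on $\partial\Omega$, satisfies $-\Delta\psi\ge 1$ and $|\nabla\psi|\le C$ (the torsion function rescaled works, or a boundary-layer modification of it), $M=\sup_{\partial\Omega\times(0,T)}u$, and $C_1,C_2$ are constants depending only on $N,q,\Omega$ to be fixed. Then I compute $\mathcal L w:=w_t-\Delta w+|\nabla w|^q$ and check $\mathcal L w\ge 0=\mathcal L u$ in $Q_{\Omega,T}$: the term $-\Delta w=(C_1+C_2 t^{-1/(q-1)})(-\Delta\psi)\ge C_1+C_2 t^{-1/(q-1)}$ dominates $|\nabla w|^q\le (C_1+C_2 t^{-1/(q-1)})^q\|\nabla\psi\|_\infty^q$ \emph{only where $\psi$ is not too small}, so one must be a little careful near $\partial\Omega$; but there the lateral comparison $w\ge M\ge u$ already holds, and the negative contribution of $w_t=-\tfrac{1}{q-1}C_2 t^{-q/(q-1)}\psi$ is exactly absorbed by the diffusion term because $w_t$ carries the same power $\psi$ as $-\Delta w$ while the dangerous $t^{-q/(q-1)}$ scaling is beaten for $C_2$ large (since $t^{-q/(q-1)}\psi \le t^{-q/(q-1)}\|\psi\|_\infty$ and we can dominate it by $C_2^q t^{-q/(q-1)}|\nabla\psi|^q$-type terms, or simply by enlarging the $-\Delta w$ contribution on the bulk and using the $M$-term near the boundary). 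After fixing $C_1,C_2$ I verify the two remaining comparison inequalities: on the parabolic boundary $\partial\Omega\times(0,T)$ we have $w\ge M\ge u$; and as $t\to 0^+$, $w(x,t)\to+\infty$ uniformly on compact subsets of $\Omega$ (because $\psi>0$ there), so $\liminf_{t\to 0}(w-u)\ge 0$ — note $u$ is only assumed continuous on $\overline\Omega\times(0,T)$, not up to $t=0$, so I would run the comparison on $\Omega\times(\delta,T)$ and let $\delta\to0$, or invoke a parabolic comparison principle for bounded classical sub/supersolutions on $Q_{\Omega,\delta,T}$ with the initial slice handled by $w(\cdot,\delta)\to\infty$. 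The classical comparison principle (the equation is degenerate-parabolic but $u,w\in C^{2,1}$, so the standard maximum principle with the convex Hamiltonian $|\nabla u|^q$ applies) then yields $u\le w$ in $Q_{\Omega,T}$, and since $\psi(x)\le C\,d(x,\partial\Omega)$ this is exactly (\ref{wa}) after relabelling constants and bounding $C_1+C_2 t^{-1/(q-1)}\le C(1+t^{-1/(q-1)})$.

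The main obstacle I anticipate is the interplay, in the supersolution computation, between the time-derivative term $w_t\sim -t^{-q/(q-1)}$ and the gradient term $|\nabla w|^q\sim t^{-q/(q-1)}$: both scale like $t^{-q/(q-1)}$ as $t\to0$, whereas the helpful diffusion term $-\Delta w$ only scales like $t^{-1/(q-1)}$, which is weaker for small $t$. So the diffusion term alone cannot absorb $w_t+|\nabla w|^q$ near $t=0$; the resolution is that $w_t$ is \emph{negative} and of the same $t$-power as $|\nabla w|^q$, so one needs the algebraic inequality $\tfrac{1}{q-1}C_2\psi \ge C_2^{q}|\nabla\psi|^q$ type bound to fail gracefully — more precisely, one should instead build the supersolution so that the gradient term is controlled by $w_t$ itself plus a lower-order diffusion remainder. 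This is the classical trick behind the universal gradient bound of \cite{CLS}: choosing $w$ of the form $(C_1+C_2 t^{-1/(q-1)})\psi+M$ with $\psi$ having $|\nabla\psi|$ small compared to $-\Delta\psi$ near the "bulk" and relying on the $M$-barrier near $\partial\Omega$ makes $-\tfrac{1}{q-1}C_2 t^{-q/(q-1)}\psi+|\nabla w|^q\le 0$ hold once $C_2$ is large, because on the region where $|\nabla w|^q$ is comparable to its sup the quantity $\psi$ is bounded below. I would isolate this as a short lemma about the geometry of $\psi$ (e.g.\ take $\psi(x)=\phi(d(x,\partial\Omega))$ with $\phi$ concave, increasing, $\phi(0)=0$), verify the pointwise supersolution inequality in the two regimes "near $\partial\Omega$" and "away from $\partial\Omega$" separately, and then conclude by comparison; the rest is routine.
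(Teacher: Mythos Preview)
Your supersolution ansatz $w(x,t)=(C_1+C_2 t^{-1/(q-1)})\psi(x)+M$ with a single barrier $\psi$ comparable to $d(x,\partial\Omega)$ cannot work, and the difficulty is exactly at the point you flag but do not resolve. Any $\psi\in C^2(\overline\Omega)$ that is positive in $\Omega$ and vanishes on the whole of $\partial\Omega$ must attain an interior maximum, hence has an interior critical point $x_0$ with $\nabla\psi(x_0)=0$. At $(x_0,t)$ one gets
\[
\mathcal{L}w(x_0,t)= -\tfrac{C_2}{q-1}\,t^{-q/(q-1)}\psi(x_0)+F(t)\,(-\Delta\psi(x_0)),
\]
and for small $t$ the first term, of order $t^{-q/(q-1)}$, dominates the second, of order $t^{-1/(q-1)}$; so $\mathcal{L}w(x_0,t)<0$ and $w$ is \emph{not} a supersolution near $t=0$. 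Your suggested fix (``on the region where $|\nabla w|^q$ is large, $\psi$ is bounded below'') addresses the wrong region: the failure occurs precisely where $|\nabla\psi|=0$, deep inside $\Omega$, and neither the $M$-barrier nor a near-boundary analysis helps there. The scaling mismatch you identify is genuine and cannot be cured by enlarging $C_2$.

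The paper's proof avoids this by not using a single $\psi$. Following \cite{CLS}, it takes for each boundary point $z\in\partial\Omega$ a function $b_z\in C^2(\overline\Omega)$ with $b_z\leq A$, $|\Delta b_z|\leq K$, and crucially $|\nabla b_z|\geq k>0$ \emph{everywhere in $\Omega$} (so $b_z$ has no interior critical point; it does not vanish on all of $\partial\Omega$, only near $z$), together with $\inf_{z}b_z(x)\asymp d(x,\partial\Omega)$. For each $z$ the product $w_z(x,t)=J(t)\,b_z(x)$ with $J(t)=C(\mathrm{Arctan}\,t)^{-1/(q-1)}$ is then a genuine supersolution: the lower bound on $|\nabla b_z|$ makes $J^q|\nabla b_z|^q\geq J^q k^q$ absorb the negative $J' b_z$ term at the correct $t^{-q/(q-1)}$ scale, and the choice of $C$ handles the $-J\Delta b_z$ remainder. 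One compares $u-(M+\delta)$ with each $w_z(\cdot,\tau+\cdot)$, lets $\tau\to0$ and $\delta\to0$, and finally takes the infimum over $z$ to recover the factor $d(x,\partial\Omega)$. The essential idea you are missing is this passage through a \emph{family} of barriers with nonvanishing gradient; a single globally defined $\psi$ cannot do both jobs at once.
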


\begin{proof}
Let $M=\sup_{\partial\Omega\times(0,T)}u.$ We set $u_{\delta}=u-(M+\delta)$
for any $\delta>0.$ On $\partial\Omega\times(0,T),$ we have $u_{\delta,k}%
\leqq-\delta<0.$ Since $u_{\delta}(0)$ is continuous, there exists
$\Omega_{\delta}\subset\subset\Omega$ such that $u_{\delta}(0)\leqq-\delta/2$
on $\Omega\backslash\Omega_{\delta}$. Then there exists a contant $C_{\delta}$
such that $u_{\delta}(0)\leqq C_{\delta}d(x,\partial\Omega).$ From \cite{CLS},
for any $z\in\partial\Omega,$ there exists a function $b_{z}(x)$ such that,
for some $k,K,A>0$ depending on $\Omega,$ and for any $x\in\Omega,$
\[
kd(x,\partial\Omega)\leqq\inf_{z\in\partial\Omega}b_{z}(x)\leqq Kd(x,\partial
\Omega),\quad b_{z}(x)\leqq A,\quad k\leqq\left\vert \nabla b_{z}%
(x)\right\vert \leqq1,\quad\left\vert \Delta b_{z}(x)\right\vert \leqq K.
\]
Then for any $z\in\partial\Omega,$ there exists a function $w_{z}$ of the form
$w_{z}(x,t)=J(t)b_{z}(x)$ such that $w_{z}$ is a supersolution of equation
(\ref{un}), $w_{z}\geqq0$ on $\partial\Omega,$ and
\[
\lim_{t\rightarrow0}d(x,\partial\Omega)^{-1}w_{z}(x,t)=\infty
\]
uniformly in $\Omega.$ Otherwise $J$ can be chosen explicitly by
$J(t)=C($Arc$\tan t)^{-1/(q-1)}$ with $C^{q-1}=k^{-q}(K\pi/2+A/(q-1))$. Thus
there exists $\tau_{\delta}>0$ such that $w_{z}(x,\tau)\geqq u_{\delta}(0)$
for $\tau\leqq\tau_{\delta}.$ Since $u_{\delta}$ is a solution of (\ref{un}),
the function $w_{z}(x,\tau+t)-u_{\delta}(x,t)$ is nonnegative from the
comparison principle. Letting $\tau\rightarrow0,$ and then $\delta
\rightarrow0$ and finally taking the infimum over $z\in\partial\Omega$ leads
to the estimate
\begin{equation}
u(x,t)\leqq M+KJ(t)d(x,\partial\Omega), \label{we}%
\end{equation}
hence (\ref{wa}) follows with another constant $C>0.$
\end{proof}

\subsection{Regularity for $q\leqq2$}

First of all, we give a result of regularity $\mathcal{C}^{2,1}$\textit{ for
any} \textit{weak} \textit{solution} of equation (\ref{un}) and for any
$q\leqq2$. Such a regularity was obtained in \cite[Proposition 3.2]{BeKoLa}
for the VSS when $q<q_{\ast}$, and the proof was valid up to $q=(N+4)/(N+2).$
We did not find a good reference in the literature under our weak assumptions,
even if a priori estimates can be found in \cite{LSU}, and H\"{o}lderian
properties in \cite{AS}, \cite{Tr}. Our proof is based on a bootstrap
technique, starting from the fact that $u$ is subcaloric. \medskip

We set $\mathcal{W}^{2,1,\rho}(Q_{\omega,s,\tau})=\{u\in L^{\rho}%
(Q_{\omega,s,\tau}):u_{t},\nabla u,D^{2}u\in L^{\rho}(Q_{\omega,s,\tau})\},$
for any $0\leqq s<\tau<T$ and $1\leqq\rho\leqq\infty.$ This space is endowed
with its usual norm.

\begin{theorem}
\label{T.2.1}Let $1<q\leqq2$. Let $\Omega$ be any domain in $\mathbb{R}^{N}$.
Suppose that $u$ is a weak nonnegative solution of (\ref{un}) in $Q_{\Omega
,T}$. \medskip

\noindent(i) Then $u\in\mathcal{C}^{2,1}(Q_{\Omega,T}),$ and there exists
$\gamma\in\left(  0,1\right)  $ such that for any smooth domains
$\omega\subset\subset\omega^{\prime}\subset\subset\Omega,$ and $0<s<\tau<T$
\begin{equation}
\left\Vert u\right\Vert _{C^{2+\gamma,1+\gamma/2}(Q_{\omega,s,\tau})}\leqq
C\Phi(\left\Vert u\right\Vert _{L^{\infty}(Q_{\omega^{\prime},s/2,\tau})}),
\label{gff}%
\end{equation}
where $\Phi$ is a continuous increasing function and $C=C(N,q,\omega
,\omega^{\prime},s,\tau).$\medskip

\noindent(ii) As a consequence, for any sequence $(u_{n})$ of weak solutions
of equation (\ref{un}) in $Q_{\Omega,T},$ uniformly locally bounded, one can
extract a subsequence converging in $C_{loc}^{2,1}(Q_{\Omega,T})$ to a weak
solution $u$ of (\ref{un}) in $Q_{\Omega,T}$.
\end{theorem}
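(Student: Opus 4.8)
The statement to prove is Theorem \ref{T.2.1}: interior $C^{2+\gamma,1+\gamma/2}$ regularity for weak nonnegative solutions of \eqref{un} when $1<q\leqq2$, together with a compactness corollary. The plan is to run a bootstrap argument, starting from the information already recorded in Lemma \ref{subcal}: $u\in L^\infty_{loc}(Q_{\Omega,T})$, $\nabla u\in L^2_{loc}(Q_{\Omega,T})$, and $u\in C((0,T);L^r_{loc}(\Omega))$ for all $r\geqq1$. The key point is that because $q\leqq2$, the gradient term $|\nabla u|^q$ is controlled by $1+|\nabla u|^2$, which sits in $L^1_{loc}$, but we need to upgrade this to higher $L^\rho_{loc}$ integrability of $\nabla u$ so as to feed parabolic $\mathcal W^{2,1,\rho}$ estimates.

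\textbf{Main steps.} First I would fix smooth domains $\omega\subset\subset\omega'\subset\subset\Omega$ and $0<s<\tau<T$ and a cutoff $\zeta\in C^\infty_c$ in $(x,t)$, equal to $1$ on $Q_{\omega,s,\tau}$ and supported in $Q_{\omega',s/2,\tau}$. Working with the regularizations $u_\varepsilon$ (which by Lemma \ref{subso} satisfy \eqref{epsi}) or directly with the weak formulation, view $v=u$ as a solution of the linear heat equation $v_t-\Delta v=f$ with $f=-|\nabla u|^q\in L^p_{loc}$ for the current exponent $p$. Local parabolic $L^p$ theory (the estimates in \cite{LSU}, or the $\mathcal W^{2,1,\rho}$ bound) then gives $u\in\mathcal W^{2,1,p}(Q_{\omega,s,\tau})$, hence by parabolic Sobolev embedding $\nabla u\in L^{p^\sharp}_{loc}$ with $p^\sharp=\frac{(N+2)p}{N+2-p}$ (for $p<N+2$), so that $|\nabla u|^q\in L^{p^\sharp/q}_{loc}$. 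Since $q\leqq2$ the gain at each step is strict: $p^\sharp/q\geqq p^\sharp/2>p$ as long as $p$ stays below $N+2$, so finitely many iterations push $|\nabla u|^q$ into $L^\rho_{loc}$ for every finite $\rho$, and then past the $L^{N+2}$ threshold to conclude $\nabla u\in L^\infty_{loc}$, i.e. $u$ is Lipschitz in $x$ locally. At that stage $f=-|\nabla u|^q\in L^\infty_{loc}$, so parabolic $L^p$ estimates for all $p$ give $u\in\mathcal W^{2,1,p}_{loc}$ for all $p<\infty$, hence $u\in C^{1+\alpha,(1+\alpha)/2}_{loc}$ for all $\alpha<1$; in particular $\nabla u$ is locally Hölder. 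Then $f=-|\nabla u|^q$ is locally Hölder (composition of the $C^1$ map $\xi\mapsto|\xi|^q$, valid since $q>1$, with a Hölder function), and classical parabolic Schauder estimates give $u\in C^{2+\gamma,1+\gamma/2}_{loc}$ with the quantitative bound \eqref{gff}, where every constant in the chain depends only on $N,q$, the domains, $s,\tau$, and $\Phi$ is the composite of the (polynomially growing) bounds accumulated along the finitely many bootstrap steps applied to $\|u\|_{L^\infty(Q_{\omega',s/2,\tau})}$.

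\textbf{Part (ii).} Given a locally uniformly bounded sequence $(u_n)$ of weak solutions, apply \eqref{gff} on an exhausting sequence of cylinders to get uniform $C^{2+\gamma,1+\gamma/2}$ bounds on each; by Arzelà–Ascoli and a diagonal extraction, a subsequence converges in $C^{2,1}_{loc}(Q_{\Omega,T})$ to some $u$, and passing to the limit in the equation \eqref{fur} (the convergence of $\nabla u_n\to\nabla u$ and $|\nabla u_n|^q\to|\nabla u|^q$ being uniform on compacts) shows $u$ is a weak solution.

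\textbf{Main obstacle.} The delicate point is keeping the bootstrap honest near the endpoints: making sure the regularized subsolutions \eqref{epsi} actually propagate to equality for $u$ (so that one may apply two-sided parabolic estimates, not just the subcaloric one-sided bound), and tracking that the gain factor $p^\sharp/q>p$ really does not stall — this requires $q<2$ strictly for a uniform multiplicative gain, while the borderline $q=2$ case needs the extra remark that $|\nabla u|^2\in L^1_{loc}$ combined with the $L^1\to$ weak-type parabolic estimate still produces a genuine improvement (or one argues by an $\varepsilon$-of-room interpolation). Handling the cutoff errors — the lower-order terms $u\nabla\zeta$, $u\,\Delta\zeta$, $u\,\zeta_t$ that appear when localizing — is routine but must be done so that at each stage the right-hand side norm is controlled by the already-established norm on the slightly larger cylinder, which is what yields the nested-cylinder structure $\omega\subset\subset\omega'$ and $s/2$ in \eqref{gff}.
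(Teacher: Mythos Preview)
Your bootstrap via the parabolic Sobolev embedding has a genuine gap: the gain is not strict in the full range $1<q\leqq2$. Starting from $\nabla u\in L^2_{loc}$ you have $f=-|\nabla u|^q\in L^{p_0}_{loc}$ with $p_0=2/q$; after one step of $\mathcal W^{2,1,p}$ estimates plus the embedding $\nabla u\in L^{p^\sharp}$, $p^\sharp=(N+2)p/(N+2-p)$, the new exponent is $p_1=p_0^\sharp/q$, and
\[
\frac{p_1}{p_0}=\frac{N+2}{q(N+2)-2},
\]
which is $>1$ only when $q<\dfrac{N+4}{N+2}$. For $q\in\bigl[\tfrac{N+4}{N+2},2\bigr]$ the iteration stalls (and at $q=2$ it actually loses integrability, since $(N+2)/(2N+2)<1$). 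This is exactly the threshold the paper flags just before the statement of the theorem as the limit of the earlier argument in \cite{BeKoLa}. Your suggested fixes at $q=2$ (weak-type $L^1$ estimates, ``$\varepsilon$-of-room'') do not repair this: the obstruction is dimensional and persists for all $q$ above $(N+4)/(N+2)$, not just at the endpoint.

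The paper avoids this by exploiting the $L^\infty$ bound through the spatial Gagliardo--Nirenberg inequality rather than the parabolic Sobolev embedding: from $u\in\mathcal W^{2,1,p}$ and $u\in L^\infty$ one gets, at each fixed time, $\Vert\nabla u(\cdot,t)\Vert_{L^{2p}}\leqq c\Vert u(\cdot,t)\Vert_{W^{2,p}}^{1/2}\Vert u(\cdot,t)\Vert_{L^\infty}^{1/2}$, hence $\nabla u\in L^{2p}_{loc}$ in space--time. The gain factor for the exponent of $f$ is then $2/q>1$ for every $q<2$, independently of $N$, and the iteration $q_k=(2/q)^k\to\infty$ closes. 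The case $q=2$ is handled separately by the Hopf--Cole substitution $z=1-e^{-u}$, which (after a regularization argument to pass from the subsolution inequality to the limit) turns the equation into the linear heat equation for $z$, whence $z\in C^\infty$ and $\max z<1$ locally, so $u=-\ln(1-z)\in C^\infty$. Your outline for part~(ii) is fine once~(i) is established.
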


\begin{proof}
(i) $\bullet$ Case $q<2.$ We can write (\ref{2.1}) under the form
\[
u_{t}-\Delta u=f,\qquad f=-|\nabla u|^{q},
\]
and $f\in L_{loc}^{q_{1}}(Q_{\Omega,T})$, with $q_{1}=2/q\in(1,2).$ From
(\ref{2.1}), there holds $u,\nabla u,f\in L_{loc}^{q_{1}}(Q_{\Omega,T}).$ Then
$u\in\mathcal{W}_{loc}^{2,1,q_{1}}(Q_{\Omega,T}),$ see \cite[ theorem
IV.$9.1$]{LSU}. Choosing $\omega^{\prime\prime}$ such that $\omega
\subset\subset\omega^{\prime\prime}\subset\subset\omega^{\prime}$and denoting
$Q=Q_{\omega,s,\tau},$ $Q^{\prime}=Q_{\omega^{\prime},s/2,\tau},$
$Q^{\prime\prime}=Q_{\omega^{\prime\prime},3s/4,\tau},$ we deduce from
(\ref{pre}) that%
\begin{align*}
\left\Vert u\right\Vert _{\mathcal{W}^{2,1,q_{1}}(Q)}  &  \leqq C(\left\Vert
f\right\Vert _{L^{q_{1}}(Q^{\prime\prime})}+\left\Vert u\right\Vert
_{L^{q_{1}}(Q^{\prime\prime})})\leqq C(\left\Vert \nabla u\right\Vert
_{L^{2}(Q^{\prime\prime})}^{q}+\left\Vert u\right\Vert _{L^{\infty}(Q^{\prime
})})\\
&  \leqq C(\left\Vert u\right\Vert _{L^{\infty}(Q^{\prime})}^{q}+\left\Vert
u\right\Vert _{L^{\infty}(Q^{\prime})}),
\end{align*}
with $C=C(N,q,\omega,\omega^{\prime},s,\tau).$ From the Gagliardo-Nirenberg
inequality, there exists $c=c(N,q,\omega)>0$ such that for almost any
$t\in(0,T)$,
\[
\Vert\nabla u(.,t)\Vert_{L^{2q_{1}}(\omega)}\leqq c\Vert u(t)\Vert
_{W^{2,q_{1}}(\omega)}^{1/2}\Vert u(t)\Vert_{L^{\infty}(\omega)}^{1/2}.
\]
Then by integration, $\left\vert \nabla u\right\vert \in L_{loc}^{2q_{1}}(Q),$
and
\begin{equation}
\Vert\nabla u\Vert_{L^{2q_{1}}(Q)}\leqq c\Vert u(t)\Vert_{W^{2,q_{1}}%
(Q)}^{1/2}\Vert u\Vert_{L^{\infty}(Q)}^{1/2}\leqq C_{1}\Phi_{1}(\left\Vert
u\right\Vert _{L^{\infty}(Q^{\prime})})), \label{2.4}%
\end{equation}
with a new constant $C_{1}$ as above, where $\Phi_{1}$ is a continuous
increasing function. Thus $f\in L_{loc}^{q_{2}}(Q_{\Omega,T}),$ with
$q_{2}=(2/q)^{2}\in\left(  q_{1},2q_{1}\right)  $ and $u,\nabla u,f\in
L_{loc}^{q_{2}}(Q_{\Omega,T}),$ in turn $u\in\mathcal{W}_{loc}^{2,1,q_{2}%
}(Q_{\Omega,T}).$ By induction we find that $u\in\mathcal{W}_{loc}^{2,1,q_{k}%
}(\Omega\times(0,T)),$ with $q_{k}=q_{1}^{k},$ for any $k\geqq1,$ and%
\[
\Vert\nabla u\Vert_{L^{2q_{k}}(Q)}\leqq C_{k}\Phi_{k}(\left\Vert u\right\Vert
_{L^{\infty}(Q^{\prime})})
\]
with $C_{k},\Phi_{k}$ as above. Choosing any $k$ so that $q_{k}>N+2,$ we
deduce that $|\nabla u|\in C^{\gamma,\gamma/2}(\omega\times(s,\tau))$ for any
$\gamma\in(0,1),$ see \cite[Lemma II.3.3]{LSU}. Then $f$ is locally
H\"{o}lderian, thus $u\in C^{2+\gamma,1+\gamma/2}(Q_{\omega,s,\tau})$, and
(\ref{gff}) holds. \medskip

$\bullet$ Case $q=2$. We define $Q$ and $Q^{\prime}$ as above, and regularize
by $u_{\varepsilon}$ in $Q^{\prime}$ for $\varepsilon$ small enough$.$ Since
$u$ is locally bounded, $u_{\varepsilon}$ converges to $u$ in $L^{s}%
(Q^{\prime})$ for any $s\geqq1,$ and by extraction $a.e.$ in $Q.$ And
$u_{\varepsilon}$ satisfies the equation in $Q^{\prime}$
\[
(u_{\varepsilon})_{t}-\Delta u_{\varepsilon}+\left\vert \nabla u\right\vert
^{2}\ast\varrho_{\varepsilon}=0.
\]
Defining the functions $z=1-e^{-u}$ in $Q_{\Omega,T},$ and $z^{\varepsilon
}=1-e^{-u_{\varepsilon}}$ in $Q^{\prime},$ we obtain that
\[
(z^{\varepsilon})_{t}-\Delta(z^{\varepsilon})+h_{\varepsilon}=0,
\]
where $h_{\varepsilon}=e^{-u_{\varepsilon}}\left(  \left\vert \nabla
u\right\vert ^{2}\ast\varrho_{\varepsilon}-|\nabla u_{\varepsilon}%
|^{2}\right)  \geqq0$ from (\ref{kine}). Then $\left\vert \nabla u\right\vert
^{2}\ast\varrho_{\varepsilon}$ converges to $|\nabla u|^{2}$ and $|\nabla
u_{\varepsilon}|^{2}$ converges to $|\nabla u|^{2}$ in $L_{loc}^{1}%
(Q_{\Omega,T}),$ thus $h_{\varepsilon}$ tends to $0$ in $L_{loc}^{1}%
(Q_{\Omega,T}).$ As $\varepsilon\rightarrow0,$ $z^{\varepsilon}$ converges to
$z$ in $L^{s}(Q)$ for any $s\geqq1$, and $z$ is a solution of the heat
equation in $\mathcal{D}^{\prime}(Q^{\prime}),$ hence also in $\mathcal{D}%
^{\prime}(Q_{\Omega,T}))$. Then $z\in C^{\infty}(Q_{\Omega,T}),$ hence
max$_{Q}z<1,$ thus $u\in C^{\infty}(Q_{\Omega,T}).$ And $\left\Vert
z\right\Vert _{L^{\infty}(Q^{\prime})}<1-e^{-\left\Vert u\right\Vert
_{L^{\infty}(Q^{\prime})}}$, then (\ref{gff}) follows from analogous estimates
on $z$.\medskip

(ii) From the estimate (\ref{gff}), one can extract a diagonal subsequence,
converging $a.e.$ to a function $u$ in $Q_{\Omega,T}$, and the convergence
holds in $C_{loc}^{2,1}(Q_{\Omega,T}).$ Then $u$ is a weak solution of
(\ref{un}) in $Q_{\Omega,T}.$\medskip
\end{proof}

In the case of the Dirichlet problem we obtain a corresponding regularity
result for the \textit{bounded} solutions. Our proof can be compared to the
proof of \cite[Proposition 4.1]{BeDa} relative to the case $q<1.$

\begin{theorem}
\label{Dir} Let $1<q\leqq2.$ Let $\Omega$ be a smooth bounded domain. Let $u$
be any weak nonnegative solution of problem $(D_{\Omega,T})$, such that $u\in
L_{loc}^{\infty}(\left(  0,T\right)  ;L^{\infty}(\Omega))$.\medskip

\noindent(i) Then $u$ satisfies the local estimates of Theorem \ref{T.2.1}.
Moreover, $u\in C^{1,0}(\overline{\Omega}\times(0,T))$ and there exists
$\gamma\in\left(  0,1\right)  $ such that, for any $0<s<\tau<T,$
\begin{equation}
\left\Vert u\right\Vert _{C(\overline{\Omega}\times\left[  s,\tau\right]
)}+\left\Vert \nabla u\right\Vert _{C^{\gamma,\gamma/2}(\overline{\Omega
}\times\left[  s,\tau\right]  )}\leqq C\Phi(\left\Vert u\right\Vert
_{L^{\infty}(Q_{\Omega,s/2,\tau})}) \label{gou}%
\end{equation}
where $C=C((N,q,\Omega,s,\tau,\gamma),$ and $\Phi$ is an increasing
function.\medskip

\noindent(ii) For any sequence $(u_{n})$ of weak solutions of $(D_{\Omega,T})$
uniformly bounded in $L_{loc}^{\infty}((0,T);L^{\infty}\left(  \Omega\right)
),$ one can extract a subsequence converging in $C_{loc}^{1,0}(\overline
{\Omega}\times(0,T))$ to a weak solution $u$ of $(D_{\Omega,T})$.
\end{theorem}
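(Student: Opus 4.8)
The plan is to mimic the proof of Theorem \ref{T.2.1} but now carrying the regularity argument up to the boundary, using the entropy formulation of Lemma \ref{ent} in place of the interior mollification, since the regularization by $\varrho_\varepsilon$ does not see $\partial\Omega$. First I would record that by Lemma \ref{subdir} the assumed bound $u\in L^\infty_{loc}((0,T);L^\infty(\Omega))$ already gives $\nabla u\in L^2_{loc}((0,T);L^2(\Omega))$ and $u\in C((0,T);L^r(\Omega))$ for every $r\geq1$; in particular $u$ is an entropy solution of the linear heat equation with Dirichlet data on each slab $[s,\tau]\subset(0,T)$ with right-hand side $f=-|\nabla u|^q\in L^{q_1}$, $q_1=2/q\in(1,2]$. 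By Lemma \ref{ent}(iii) this means $u(\cdot,t)=e^{(t-s)\Delta}u(\cdot,s)+\int_s^t e^{(t-\sigma)\Delta}f(\sigma)\,d\sigma$, so maximal $L^\rho$-regularity for the Dirichlet heat semigroup (e.g.\ \cite{LSU}, together with the $W^{2,\rho}\cap W_0^{1,\rho}$ estimate) gives $u\in \mathcal{W}^{2,1,q_1}$ up to $\partial\Omega$ on a slightly smaller slab, with a bound depending only on $\|u\|_{L^\infty}$ and $\|f\|_{L^{q_1}}\lesssim \|\nabla u\|_{L^2}^q\lesssim \|u\|_{L^\infty}^q$.

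The second step is the bootstrap, run exactly as in Theorem \ref{T.2.1}(i): the Gagliardo--Nirenberg inequality (now on $\Omega$, using that $u=0$ on $\partial\Omega$ so no boundary terms appear) upgrades $\nabla u\in L^{2q_1}$, hence $f=-|\nabla u|^q\in L^{q_2}$ with $q_2=q_1^2$, and iterating yields $u\in\mathcal{W}^{2,1,q_k}(\overline\Omega\times[s_k,\tau])$ with $q_k=q_1^k\to\infty$ when $q<2$. Choosing $k$ with $q_k>N+2$ gives $\nabla u\in C^{\gamma,\gamma/2}(\overline\Omega\times[s,\tau])$ for any $\gamma\in(0,1)$ by the embedding \cite[Lemma II.3.3]{LSU}, which in particular gives $u\in C^{1,0}(\overline\Omega\times(0,T))$ and the estimate (\ref{gou}), with the constant tracked through each step as an increasing function $\Phi$ of $\|u\|_{L^\infty(Q_{\Omega,s/2,\tau})}$. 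For $q=2$ I would instead repeat the substitution $z=1-e^{-u}$ from Theorem \ref{T.2.1}: since $u$ is bounded and $u=0$ on $\partial\Omega$, $z$ solves $z_t-\Delta z=0$ with $z=0$ on $\partial\Omega$ (the sign term drops, as the $h_\varepsilon\geq0$ argument there shows it is actually a subsolution of the heat equation and, combined with the entropy identity of Lemma \ref{subdir}, an exact solution), so $z$ is smooth up to the boundary by parabolic regularity for the Dirichlet heat equation, $\max z<1$, and one transfers the bounds back to $u$. The interior estimates of Theorem \ref{T.2.1} hold verbatim since a bounded weak solution of $(D_{\Omega,T})$ is in particular a bounded weak solution of (\ref{un}) in $Q_{\Omega,T}$.

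Part (ii) is then a compactness consequence: if $(u_n)$ are weak solutions of $(D_{\Omega,T})$ uniformly bounded in $L^\infty_{loc}((0,T);L^\infty(\Omega))$, the uniform bound (\ref{gou}) plus the interior bound (\ref{gff}) let me extract, by a diagonal argument over an exhaustion of $(0,T)$, a subsequence converging in $C^{1,0}_{loc}(\overline\Omega\times(0,T))$ (hence $C^{2,1}_{loc}$ in the interior) to some $u$; the $C^1$-convergence up to $\partial\Omega$ preserves the homogeneous Dirichlet condition and lets me pass to the limit in the weak formulation (\ref{fur}), and $u\in C((0,T);L^1(\Omega))$ plus $|\nabla u|\in L^q_{loc}((0,T);L^q(\Omega))$ follow from the uniform estimates, so $u$ is a weak solution of $(D_{\Omega,T})$.

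The main obstacle is the up-to-the-boundary step: making the first bootstrap iteration rigorous requires knowing that the Dirichlet heat semigroup representation of Lemma \ref{ent} actually delivers $\mathcal{W}^{2,1,\rho}(\overline\Omega\times[s',\tau])$ regularity from $f\in L^\rho$ and $u(\cdot,s)\in L^\infty$, and that the Gagliardo--Nirenberg estimate can be applied with the $W_0^{1,\rho}$ structure so that no uncontrolled boundary contributions enter — i.e.\ checking that the Dirichlet condition is genuinely propagated at each stage rather than only the interior regularity. Once the first slab estimate is secured, the remaining iterations are identical to the interior proof, and the case $q=2$ reduces cleanly to the linear Dirichlet heat equation via $z=1-e^{-u}$.
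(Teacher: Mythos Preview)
Your proposal is correct and follows essentially the same route as the paper: Lemma \ref{subdir} for the starting $L^2$ gradient bound, Lemma \ref{ent} for the semigroup representation, the LSU $\mathcal{W}^{2,1,\rho}$ estimate combined with the Gagliardo--Nirenberg bootstrap for $q<2$, and the substitution $z=1-e^{-u}$ for $q=2$. The paper resolves the ``main obstacle'' you flag by writing $u=u_1+u_2$ on $[s/2,\tau]$ with $u_1(\cdot,t)=e^{(t-s/2)\Delta}u(s/2)$ (smooth on $\overline\Omega\times[s,\tau]$ by the regularizing effect of the Dirichlet heat semigroup on $L^\infty$ data) and $u_2$ the Duhamel integral with zero initial data (handled directly by \cite[Theorem IV.9.1]{LSU}), so no trace-space compatibility for the initial datum is needed.
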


\begin{proof}
(i) $\bullet$ Case $q<2.$ From Lemma \ref{subdir}, we have $\nabla u\in
L_{loc}^{2}(0,T);L^{2}\left(  \Omega\right)  )$ and $u\in C((0,T);L^{1}%
(\Omega)).$ Then $f=-|\nabla u|^{q}\in L_{loc}^{q_{1}}((0,t);L^{q_{1}}\left(
\Omega\right)  ).$ For any $0<s<\tau<T,$ and $t\in\left[  s/2,\tau\right]  ,$
we can write $u(.,t)=u_{1}(.,t)+u_{2}(.,t),$ from Lemma \ref{ent}, where
\[
u_{1}(.,t)=e^{(t-s/2)\Delta}u(\frac{s}{2}),\qquad u_{2}(.,t)=\int_{s/2}%
^{t}e^{(t-\sigma)\Delta}f(\sigma)d\sigma.
\]
We get $u_{1}\in C^{\infty}(\overline{Q_{\Omega,s,\tau}})$ from the
regularizing effect of the heat equation, and $u_{2}\in\mathcal{W}^{2,1,q_{1}%
}(Q_{\Omega,T}),$ from \cite[ theorem IV.$9.1$]{LSU}. As above, from the
Gagliardo estimate, we get $f\in L_{loc}^{q_{2}}((0,t);L^{q_{2}}\left(
\Omega\right)  ),$ and by induction $|\nabla u|\in C^{\gamma,\gamma
/2}(\overline{Q_{\Omega,s,\tau}})$ for some $\gamma\in(0,1),$ see \cite[Lemma
II.3.3]{LSU}. The estimates follow as above.\medskip

$\bullet$ Case $q=2$. From Theorem \ref{T.2.1}, $u$ is smooth in $Q_{\Omega
,T}$, and $z=1-e^{-u}$ is a solution of the heat equation, and $z\in
C((0,T);L^{1}(\Omega)).$ Then $z(.,t)=e^{(t-s/2)\Delta}z(s/2),$ thus $z\in
C^{\infty}(\overline{Q_{\Omega,s,\tau}}).$ This implies that max$_{\overline
{Q_{\Omega,s,\tau}}}z<1,$ thus $u\in C^{\infty}(\overline{Q_{\Omega,s,\tau}})$
and the estimates follow again.\medskip

(ii) It follows directly from (\ref{gou}).
\end{proof}

\begin{remark}
\label{chos}As a consequence, in the case $q\leqq2,$ we find again the
estimate (\ref{univ}) for the problem $(D_{\Omega,T})$ without using the
Bernstein argument, and it is valid for any weak solution $u\in L_{loc}%
^{\infty}(\left(  0,T\right)  ;L^{\infty}(\Omega)).$
\end{remark}

\subsection{Singular solutions or supersolutions\bigskip}

In the study some functions play a fundamental role. The first one was
introduced in \cite{BeLa99}.

\subsubsection{A stationary supersolution}

Assume that $1<q<2.$ Equation (\ref{un}) admits a stationary solution whenever
$N=1$ or $N\geqq2,$ $1<q<N/(N-1),$ defined by
\[
s\in\left(  0,\infty\right)  \longmapsto\Gamma_{N}(s)=\gamma_{N,q}%
s^{-a},\qquad\hspace{0.06in}a=\frac{2-q}{q-1},\qquad\gamma_{N,q}%
=a^{-1}(a+2-N)^{1-q}.
\]
Moreover in the range $1<q<2,$ the function $\Gamma=\Gamma_{1}$ defined by
\begin{equation}
s\in\left(  0,\infty\right)  \longmapsto\Gamma(s)=\gamma_{q}s^{-a}%
,\qquad\hspace{0.06in}a=\frac{2-q}{q-1},\qquad\gamma_{q}=\frac{(q-1)^{-a}%
}{2-q}, \label{ama}%
\end{equation}
is a radial supersolution of equation (\ref{un}) for any $N.$

\subsubsection{Large solutions}

Here we recall a main result of \cite{CLS} obtained as a consequence of the
universal estimates.

\begin{theorem}
[\cite{CLS}]\label{L.3.3}Let $G$ be any smooth bounded domain, and $\eta>0$
such that $B_{\eta}\subset\subset G.$ Then for any $q>1,$ there exists a
(unique) solution $Y_{\eta}^{G}$ of the problem%
\begin{equation}
\left\{
\begin{array}
[c]{c}%
(Y_{\eta}^{G})_{t}-\Delta Y_{\eta}^{G}+|\nabla Y_{\eta}^{G}|^{q}%
=0,\qquad\text{in}\hspace{0.05in}Q_{G,\infty},\\
Y_{\eta}^{G}=0,\qquad\text{on }\hspace{0.05in}\partial G\times(0,\infty),\\
Y_{\eta}^{G}(x,0)=\left\{
\begin{array}
[c]{c}%
\infty\quad\text{if }x\in B_{\eta},\\
0\quad\text{if not,}%
\end{array}
\right.
\end{array}
\right.  \label{trg}%
\end{equation}
which is uniformly Lipschitz continuous in $\overline{G}$ for $t$ in compacts
sets of $(0,\infty)$ and is a classical solution of the problem for $t>0,$ and
satisfies the initial condition in the sense:%
\begin{equation}
\lim_{t\rightarrow0}\inf_{x\in K}Y_{\eta}^{G}(x,t)=\infty,\quad\forall K\text{
compact}\subset B_{\eta};\qquad\lim_{t\rightarrow0}\sup_{x\in K}Y_{\eta}%
^{G}(x,t)=0,\quad\forall K\text{ compact}\subset\overline{G}\backslash
\overline{B_{\eta}}. \label{mil}%
\end{equation}
And $Y_{\eta}^{G}$ is the supremum of the solutions $y_{\varphi_{\eta,G}}$
with nonnegative initial data $\varphi_{\eta,G}\in C(G)$ such that
$\varphi_{\eta,G}=0$ on $G\backslash\overline{B_{\eta}}.$
\end{theorem}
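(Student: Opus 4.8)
The plan is to construct $Y_\eta^G$ as an increasing limit of classical solutions with large but finite initial data, then control the limit using the universal estimates of Theorem \ref{apri}. First I would fix a sequence $\varphi_k \in C_0^1(\overline{G})$, nonnegative, supported in $\overline{B_\eta}$, with $\varphi_k \nearrow \infty$ pointwise on $B_\eta$ and $\varphi_k \equiv 0$ on $G \setminus \overline{B_\eta}$; concretely one can take $\varphi_k = k \chi$ for a fixed bump $\chi$ supported in $B_\eta$ with $\chi > 0$ on a slightly smaller ball, or better a saturating family so that the supremum over \emph{all} admissible data is realized. For each $k$, problem (\ref{pqr}) with $\varphi = 0$ on $\partial G$ and initial data $\varphi_k$ has a unique classical solution $y_k \in C^{2,1}(Q_{G,\infty}) \cap C(\overline{G} \times [0,\infty))$ with $|\nabla y_k|$ continuous up to the boundary, as recalled before Theorem \ref{apri}. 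By the comparison principle the $y_k$ are nondecreasing in $k$, so $Y_\eta^G := \lim_k y_k = \sup_k y_k$ exists pointwise in $[0,\infty]$.

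The crucial point is that this limit is finite and well-behaved for $t > 0$, and here Theorem \ref{apri} does all the work: applied to each $y_k$ (whose data is Lipschitz and vanishes on $\partial G$) it gives
\[
\|y_k(\cdot,t)\|_{L^\infty(G)} \leqq B(t)\, d(x,\partial G), \qquad \|\nabla y_k(\cdot,t)\|_{L^\infty(G)} \leqq D(t),
\]
with $B, D \in C((0,\infty))$ \emph{independent of $k$}. Thus for each $t_0 > 0$ the family $(y_k)$ is uniformly bounded and uniformly Lipschitz in $\overline{G} \times [t_0,\infty)$; combining this with interior parabolic regularity (Theorem \ref{T.2.1} when $q \leqq 2$, or the standard $C^{2,1}$ a priori estimates of \cite{LSU} together with the universal gradient bound in general), one extracts local $C^{2,1}$ convergence on $Q_{G,\infty}$, so $Y_\eta^G$ is a classical solution of the equation for $t > 0$ and is uniformly Lipschitz in $\overline{G}$ on compact subsets of $(0,\infty)$, with $Y_\eta^G = 0$ on $\partial G \times (0,\infty)$.

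It remains to identify the initial trace (\ref{mil}) and to check maximality. For the lower bound on $B_\eta$: given any $M > 0$ and $K \subset\subset B_\eta$, choose $k$ with $\varphi_k \geqq M$ on $K$; then $y_k(\cdot, t) \to M$ uniformly on $K$ as $t \to 0$ (continuity of $y_k$ up to $t=0$), so $\liminf_{t\to 0}\inf_K Y_\eta^G \geqq M$, and $M$ is arbitrary. For the upper bound on $\overline{G}\setminus\overline{B_\eta}$: I would build a supersolution that is bounded independently of the initial data on the support $\overline{B_\eta}$ — this is exactly the mechanism of Theorem \ref{apri} again, or one can use a stationary supersolution (for $q < 2$, a translate/scaling of $\Gamma$ from (\ref{ama}) placed so as to dominate near $B_\eta$ but decay away from it) or the barrier functions $b_z$ from the proof of Lemma \ref{mono}; comparing $y_k$ with such a barrier on $Q_{G',\infty}$ for $B_\eta \subset\subset G' \subset\subset G$ forces $y_k$, hence $Y_\eta^G$, to vanish as $t \to 0$ on compacts away from $\overline{B_\eta}$, uniformly in $k$. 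Finally, if $y_\varphi$ is the solution with any admissible data $\varphi \in C(G)$, $\varphi \geqq 0$, $\varphi = 0$ off $\overline{B_\eta}$, then $\varphi \leqq \varphi_k$ for $k$ large (after the saturating choice), so $y_\varphi \leqq y_k \leqq Y_\eta^G$; conversely each $y_k$ is itself such a $y_\varphi$, giving $Y_\eta^G = \sup_\varphi y_\varphi$. Uniqueness of $Y_\eta^G$ follows from the comparison principle: any two solutions with the initial trace (\ref{mil}) are squeezed between the $y_k$ from below and the barrier from above, forcing equality. The main obstacle I anticipate is the uniform-in-$k$ control of the limit as $t \to 0$ away from $\overline{B_\eta}$ — i.e. producing the right barrier that is insensitive to how large the data is on $B_\eta$ — but the universal estimate (\ref{wi}) of Theorem \ref{apri}, applied after restricting attention to the solutions $y_k$ whose data genuinely vanishes outside $\overline{B_\eta}$, is precisely tailored to remove this difficulty.
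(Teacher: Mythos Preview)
First, note that the paper does not itself prove Theorem \ref{L.3.3}: it is quoted as ``a main result of \cite{CLS} obtained as a consequence of the universal estimates,'' with no proof given. Your sketch is therefore being compared to the strategy of \cite{CLS} as summarized here. Your overall architecture is correct and is the CLS approach: build $Y_\eta^G$ as the increasing limit of classical solutions $y_k$ with smooth data supported in $\overline{B_\eta}$, use the universal bounds of Theorem \ref{apri} to get $L^\infty$ and Lipschitz control for $t>0$ independent of $k$, pass to the limit via parabolic regularity, and read off maximality from comparison.

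There is, however, a real gap in your treatment of the second half of (\ref{mil}), namely $\lim_{t\to 0}\sup_K Y_\eta^G(\cdot,t)=0$ for compact $K\subset \overline G\setminus\overline{B_\eta}$. You claim that ``the universal estimate (\ref{wi}) \ldots\ is precisely tailored to remove this difficulty,'' but (\ref{wi}) only gives $u(x,t)\le B(t)\,d(x,\partial G)$ with $B(t)\to\infty$ as $t\to 0$; it says nothing about vanishing at $t=0$ in the interior of $G$ away from $B_\eta$. The barriers $b_z$ of Lemma \ref{mono} are likewise adapted to $\partial G$, not to $\partial B_\eta$, and the stationary supersolution $\Gamma$ is available only for $q<2$. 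The correct tool --- the one the paper explicitly flags as ``a crucial point for existence'' --- is Lemma \ref{super}: for any ball $B(x_0,s)\subset G\setminus\overline{B_\eta}$ and any $\lambda>0$, the function $w_{\lambda,s}(\cdot-x_0)$ is a supersolution on $B(x_0,s)\times[0,\infty)$ that equals $+\infty$ on $\partial B(x_0,s)$; since each $y_k$ has zero initial data on $B(x_0,s)$, comparison gives $y_k\le w_{\lambda,s}(\cdot-x_0)$ there uniformly in $k$, hence $\limsup_{t\to 0}\sup_{B(x_0,s/2)} Y_\eta^G(\cdot,t)\le \lambda e^{1/\alpha_s(0)}$ for every $\lambda>0$. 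This is exactly the argument the paper uses later in the proof of Proposition \ref{L.3.4} to verify (\ref{fol}). With this correction your sketch goes through.
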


A crucial point for existence was the construction of a supersolution for the
problem in a ball:

\begin{lemma}
\label{super}For any ball $B_{s}\subset\mathbb{R}^{N}$ and any $\lambda>0,$
there exists a supersolution $w_{\lambda,s}$ of equation (\ref{un}) in
$B_{s}\times\left[  0,\infty\right)  $, such that%
\[
w_{\lambda,s}=\infty\text{ \ on }\partial B_{s}\times\left[  0,\infty\right)
,\qquad w_{\lambda,s}=\lambda e^{ct+1/\alpha_{s}(x)},\quad c=c(\lambda)>0,
\]
where $\alpha_{s}$ is the solution of $-\Delta\alpha_{s}=1$ in $B_{s}$ and
$\alpha_{s}=0$ on $\partial B_{s}.$
\end{lemma}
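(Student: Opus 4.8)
The plan is to construct $w_{\lambda,s}$ explicitly in the separated form $w_{\lambda,s}(x,t)=\lambda e^{ct+1/\alpha_s(x)}$ where $\alpha_s>0$ solves the torsion problem $-\Delta\alpha_s=1$ in $B_s$, $\alpha_s=0$ on $\partial B_s$, so that $\alpha_s\to 0$ near $\partial B_s$ and hence $1/\alpha_s\to+\infty$ there, forcing $w_{\lambda,s}=\infty$ on the lateral boundary, as required. Writing $v=e^{ct+1/\alpha_s(x)}$, so $w_{\lambda,s}=\lambda v$, I would compute the three terms of the operator. First, $(w_{\lambda,s})_t=c\lambda v$. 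Next, since $\nabla v = v\,\nabla(1/\alpha_s) = -v\,\alpha_s^{-2}\nabla\alpha_s$, we get $\nabla w_{\lambda,s}=-\lambda v\,\alpha_s^{-2}\nabla\alpha_s$, hence $|\nabla w_{\lambda,s}|^q=\lambda^q v^q\,\alpha_s^{-2q}|\nabla\alpha_s|^q$. For the Laplacian, $\Delta v = v\bigl(|\nabla(1/\alpha_s)|^2+\Delta(1/\alpha_s)\bigr)$ with $\Delta(1/\alpha_s) = -\alpha_s^{-2}\Delta\alpha_s + 2\alpha_s^{-3}|\nabla\alpha_s|^2 = \alpha_s^{-2}+2\alpha_s^{-3}|\nabla\alpha_s|^2$ (using $-\Delta\alpha_s=1$), so
\[
\Delta w_{\lambda,s}=\lambda v\Bigl(\alpha_s^{-4}|\nabla\alpha_s|^2+\alpha_s^{-2}+2\alpha_s^{-3}|\nabla\alpha_s|^2\Bigr).
\]

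Then the plan is to check the supersolution inequality $(w_{\lambda,s})_t-\Delta w_{\lambda,s}+|\nabla w_{\lambda,s}|^q\ge 0$, i.e.
\[
c\lambda v+\lambda^q v^q\alpha_s^{-2q}|\nabla\alpha_s|^q\ \ge\ \lambda v\Bigl(\alpha_s^{-4}|\nabla\alpha_s|^2+\alpha_s^{-2}+2\alpha_s^{-3}|\nabla\alpha_s|^2\Bigr).
\]
The dominant term on the right as $\alpha_s\to 0$ is $\lambda v\,\alpha_s^{-4}|\nabla\alpha_s|^2$; on the left the key term is $\lambda^q v^q\alpha_s^{-2q}|\nabla\alpha_s|^q$. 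Dividing by $\lambda v$, the right side behaves like $\alpha_s^{-4}|\nabla\alpha_s|^2$ (plus lower-order) while the left-side gradient term is $\lambda^{q-1}v^{q-1}\alpha_s^{-2q}|\nabla\alpha_s|^q$. Since $v=e^{ct+1/\alpha_s}$ grows faster than any power of $1/\alpha_s$ as $\alpha_s\to 0$, the factor $v^{q-1}$ (with $q>1$) dominates all the negative powers of $\alpha_s$ coming from the Laplacian terms, uniformly on $B_s$; the exterior normal derivative $|\nabla\alpha_s|$ is bounded above and, by Hopf's lemma, bounded below by a positive constant near $\partial B_s$, and $\alpha_s$ itself is bounded on $B_s$. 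Hence there is a threshold $c=c(\lambda,s,N,q)>0$ large enough that the inequality holds everywhere in $B_s\times[0,\infty)$: near $\partial B_s$ the gradient term wins because of the exponential blow-up of $v$, and on any compact subset of $B_s$ where $\alpha_s$ is bounded below, all terms are bounded so it suffices to take $c$ large. I would make this precise by splitting $B_s$ into a boundary collar $\{\alpha_s<\eta\}$ and its complement, and estimating on each piece.

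The main obstacle is the boundary collar: there $v\to\infty$ but so do the negative powers $\alpha_s^{-4}$, and one must verify that the gradient term $\lambda^{q-1}v^{q-1}\alpha_s^{-2q}|\nabla\alpha_s|^q$ genuinely dominates $\alpha_s^{-4}|\nabla\alpha_s|^2$. Since $q>1$, we have $v^{q-1}=e^{(q-1)/\alpha_s}\cdot e^{(q-1)ct}$, and $e^{(q-1)/\alpha_s}$ beats every polynomial $\alpha_s^{-m}$ as $\alpha_s\to 0^+$; combined with the Hopf bound $|\nabla\alpha_s|\ge\kappa>0$ on the collar, the left side dominates for $\eta$ small. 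On the complementary region $\{\alpha_s\ge\eta\}$, $v$, $\alpha_s^{-1}$, $|\nabla\alpha_s|$ and $|\Delta\alpha_s|$ are all bounded, and choosing $c$ large (depending on $\lambda$, $\eta$, and the $C^2$ bounds of $\alpha_s$) makes the term $c\lambda v$ absorb the right-hand side. This dichotomy yields the claimed $w_{\lambda,s}$ with $c=c(\lambda)>0$, and its continuity and smoothness in $B_s\times[0,\infty)$ follow from elliptic regularity for $\alpha_s$.
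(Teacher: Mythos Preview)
Your direct verification is correct. The paper does not actually supply a proof of this lemma; it is quoted from \cite{CLS} as a known construction, so there is no ``paper's own proof'' to compare against beyond the explicit form of $w_{\lambda,s}$ already given in the statement. Your computation of $(w_{\lambda,s})_t$, $\Delta w_{\lambda,s}$ and $|\nabla w_{\lambda,s}|^q$ is accurate (in particular the identity $\Delta(1/\alpha_s)=\alpha_s^{-2}+2\alpha_s^{-3}|\nabla\alpha_s|^2$, using $-\Delta\alpha_s=1$), and the two-region argument---exponential domination of $v^{q-1}=e^{(q-1)/\alpha_s}e^{(q-1)ct}$ over all powers $\alpha_s^{-m}$ on the collar $\{\alpha_s<\eta\}$ (with $|\nabla\alpha_s|\geq\kappa>0$ there by Hopf, or explicitly since $\alpha_s(x)=(s^2-|x|^2)/(2N)$), and absorption of the bounded right-hand side by $c$ on $\{\alpha_s\geq\eta\}$---is exactly the standard route. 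One small refinement worth stating explicitly: the collar threshold $\eta$ must be chosen depending on $\lambda$ (smaller $\lambda$ forces smaller $\eta$), and then $c$ is chosen in terms of $\eta$, which is why the statement records $c=c(\lambda)$; you mention this dependence but it is good to make the order of choices clear.
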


\subsection{Some trace results}

First we extend a trace result of \cite{MouV}.

\begin{lemma}
\label{L.4.3}Let $U\in C((0,T);L_{loc}^{1}(\Omega))$ be any nonnegative weak
solution of equation
\begin{equation}
U_{t}-\Delta U=\Phi\label{4.7}%
\end{equation}
in $Q_{\Omega,T}$, with $\Phi\in L_{loc}^{1}(Q_{\Omega,T})$.\medskip

\noindent(i) Assume that $\Phi\geqq-F,$ where $F\in L_{loc}^{1}(\Omega
\times\lbrack0,T))$. Then $U(.,t)$ converges weak$^{\ast}$ to some Radon
measure $U_{0}:$%
\[
\lim_{t\rightarrow0}\int_{\Omega}U(.,t)\varphi dx=\int_{\Omega}\varphi
dU_{0},\qquad\forall\varphi\in C_{c}(\Omega).
\]
Furthermore, $\Phi\in L_{loc}^{1}([0,T);L_{loc}^{1}(\Omega)),$ and for any
$\varphi\in C_{c}^{2}(\Omega\times\lbrack0,T))$,
\begin{equation}
-\int_{0}^{T}\int_{\Omega}(U\varphi_{t}+U\Delta\varphi+\Phi\varphi
)dxdt=\int_{\Omega}\varphi(.,0)dU_{0}. \label{tru}%
\end{equation}
(ii) Assume that $\Phi$ has a constant sign. Then
\begin{equation}
\Phi\in L_{loc}^{1}([0,T);L_{loc}^{1}(\Omega))\Longleftrightarrow U\in
L_{loc}^{\infty}{(}\left[  0,T\right)  {;L_{loc}^{1}(}\Omega)). \label{eki}%
\end{equation}

\end{lemma}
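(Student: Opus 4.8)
\textbf{Plan for the proof of Lemma \ref{L.4.3}.}
The natural starting point is to localize the equation against a fixed test function and exploit the sign of $\Phi$ to control the mass of $U(.,t)$ as $t\to 0$. First I would fix a nonnegative $\xi\in C_c^2(\Omega)$ and, using Lemma \ref{appro} on the interval $(s,\tau)$, write
\[
\int_{\Omega}U(.,\tau)\xi\,dx-\int_{\Omega}U(.,s)\xi\,dx=\int_s^\tau\int_\Omega\bigl(U\Delta\xi+\Phi\xi\bigr)\,dx\,dt.
\]
Under the hypothesis of (i), $\Phi\geqq -F$ with $F\in L^1_{loc}(\Omega\times[0,T))$, so $\int_s^\tau\int_\Omega\Phi\xi\,dx\,dt\geqq -\int_0^\tau\int_\Omega F\xi\,dx\,dt$, which is bounded below uniformly in $s$; since $\int_s^\tau\int_\Omega U\Delta\xi\,dx\,dt$ is also controlled by the $C((0,T);L^1_{loc})$ assumption on any compact time interval bounded away from $0$ — here one takes $\tau$ fixed and lets $s\to 0$ — the map $s\mapsto\int_\Omega U(.,s)\xi\,dx$ is bounded above. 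Taking $\xi$ of the form $\zeta^2$ or using that any $\varphi\in C_c(\Omega)$ is dominated by such $\xi$, one gets $\sup_{0<s<\tau_0}\int_\Omega U(.,s)\varphi\,dx<\infty$ for every nonnegative $\varphi\in C_c(\Omega)$, i.e. $U(.,s)$ is bounded in the space of Radon measures on each compact subset of $\Omega$.

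Next I would upgrade boundedness to convergence. The quantity $s\mapsto\int_\Omega U(.,s)\xi\,dx+\int_0^s\int_\Omega F\xi\,dx\,dt$ is, up to the harmless term $\int_0^s\int_\Omega U\Delta\xi\,dx\,dt$ which has a limit as $s\to 0$, monotone (the $\Phi\xi+F\xi\geqq 0$ integrand makes $t\mapsto\int_\Omega U(.,t)\xi\,dx+\int_0^t\int_\Omega(F-?)\xi$ suitably monotone after absorbing the $\Delta\xi$ term into a corrected test-function argument). More cleanly: replace $\xi$ by the first eigenfunction-type corrector, or simply note that for $\xi\geqq 0$ with $\Delta\xi$ also controlled, $g(s):=\int_\Omega U(.,s)\xi\,dx$ satisfies $g(\tau)-g(s)=\int_s^\tau h(t)\,dt$ with $h\in L^1(0,\tau_0)$ by the just-proved local $L^1$ bound on $\Phi$ near $t=0$ (which itself follows because $\Phi+F\geqq 0$ and $\int_0^\tau\int_\Omega(\Phi+F)\xi\,dx\,dt=g(\tau)-\lim_{s\to0}g(s)+\dots<\infty$, using the uniform bound on $g$). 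Hence $g(s)$ has a limit as $s\to 0$ for every such $\xi$, and by density and the uniform measure bound this identifies a Radon measure $U_0$ with $\int_\Omega U(.,t)\varphi\,dx\to\int_\Omega\varphi\,dU_0$ for all $\varphi\in C_c(\Omega)$. The identity (\ref{tru}) is then obtained by passing to the limit $s\to 0$ in (\ref{ger}) with a test function $\varphi\in C_c^2(\Omega\times[0,T))$, using $\Phi\in L^1_{loc}([0,T);L^1_{loc}(\Omega))$ (now established) to handle the $\Phi\varphi$ term and the measure convergence to handle the boundary term at $t=0$.

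For part (ii), when $\Phi$ has a constant sign, the forward implication $\Phi\in L^1_{loc}([0,T);L^1_{loc}(\Omega))\Rightarrow U\in L^\infty_{loc}([0,T);L^1_{loc}(\Omega))$ follows from (i): apply (\ref{tru}), or rather the bound $g(s)\leqq g(\tau)+\int_0^\tau\int_\Omega|\Phi|\xi+\bigl|\int_s^\tau\int_\Omega U\Delta\xi\bigr|$, to see the local $L^1$ norms of $U(.,s)$ stay bounded as $s\to 0$. For the converse, assume $U\in L^\infty_{loc}([0,T);L^1_{loc}(\Omega))$ and, say, $\Phi\geqq 0$ (the other sign is symmetric). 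Then from (\ref{ger}) on $(s,\tau)$ with fixed $\xi\geqq 0$,
\[
\int_s^\tau\int_\Omega\Phi\xi\,dx\,dt=\int_\Omega U(.,\tau)\xi\,dx-\int_\Omega U(.,s)\xi\,dx-\int_s^\tau\int_\Omega U\Delta\xi\,dx\,dt,
\]
and every term on the right is bounded uniformly in $s\in(0,\tau)$ by the assumed $L^\infty_{loc}([0,T);L^1_{loc})$ bound; letting $s\to 0$ and invoking monotone convergence on the left (since $\Phi\xi\geqq 0$) gives $\int_0^\tau\int_\Omega\Phi\xi\,dx\,dt<\infty$, hence $\Phi\in L^1_{loc}([0,T);L^1_{loc}(\Omega))$.

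\textbf{Main obstacle.} The delicate point is the passage from a uniform \emph{bound} on $\int_\Omega U(.,s)\xi\,dx$ to actual \emph{convergence} as $s\to 0$ to a single limit measure $U_0$ independent of the approximation; this requires first bootstrapping to the time-integrability of $\Phi$ up to $t=0$, which in turn needs the one-sided control $\Phi\geqq -F$ together with the uniform measure bound — these two feed into each other, so the argument must be organized so that the measure bound is established first (using only $\Phi\geqq -F$ and $U\in C((0,T);L^1_{loc})$), then the $L^1$-in-time bound on $\Phi$, then monotonicity/Cauchy in $s$ of the masses, and only then the identification of $U_0$ and the limiting identity (\ref{tru}). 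Keeping the localization uniform as the time interval degenerates to contain $t=0$, and making sure the $\int U\Delta\xi$ term genuinely converges (not merely stays bounded) as $s\to 0$, are where care is needed.
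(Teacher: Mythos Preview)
Your argument has a genuine circularity at the first step. From the identity
\[
\int_{\Omega}U(.,s)\xi\,dx=\int_{\Omega}U(.,\tau)\xi\,dx-\int_s^\tau\int_\Omega U\Delta\xi\,dx\,dt-\int_s^\tau\int_\Omega\Phi\xi\,dx\,dt,
\]
you bound the $\Phi$ term below via $\Phi\geqq -F$, but then assert that $\int_s^\tau\int_\Omega U\Delta\xi\,dx\,dt$ is ``controlled by the $C((0,T);L^1_{loc})$ assumption''. That assumption gives you nothing as $s\to 0$: it says only that $t\mapsto U(.,t)$ is continuous into $L^1_{loc}$ on the \emph{open} interval, with no bound whatsoever near $t=0$. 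So $\int_s^\tau\int_\Omega |U\Delta\xi|\,dx\,dt$ could blow up as $s\to 0$, and then your upper bound on $\int_\Omega U(.,s)\xi\,dx$ collapses. Everything downstream --- the $L^1$-in-time bound on $\Phi$, the Cauchy-in-$s$ argument, the identification of $U_0$ --- rests on this bound, so the whole chain is circular. You flag this issue yourself in the ``Main obstacle'' paragraph, but the proposal does not actually resolve it.

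The paper's device, which you mention in passing (``first eigenfunction-type corrector'') but do not carry out, is exactly what breaks the circle. One works on a subdomain $\omega\subset\subset\Omega$ and tests against the first Dirichlet eigenfunction $\phi_1$ of $-\Delta$ on $\omega$. Then $\Delta\phi_1=-\lambda_1\phi_1$ in $\omega$, so the troublesome term $\int_\omega U\Delta\phi_1$ becomes $-\lambda_1\int_\omega U\phi_1=-\lambda_1 X(t)$, i.e.\ a multiple of the very quantity $X(t)=\int_\omega U(.,t)\phi_1\,dx$ one is tracking; and the boundary term $-\int_{\partial\omega}U\,\partial\phi_1/\partial\nu\,d\sigma$ has a good sign since $U\geqq 0$ and $\partial\phi_1/\partial\nu\leqq 0$. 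After regularizing $U$, this yields that
\[
h(t)=e^{\lambda_1 t}X(t)-\int_t^\tau\!\!\int_\omega e^{\lambda_1\theta}F\phi_1\,dx\,d\theta
\]
is nondecreasing on $(0,T)$. Monotonicity of $h$ gives both the bound on $X$ as $t\to 0$ \emph{and} the existence of $\lim_{t\to 0}X(t)$ in one stroke, and simultaneously forces $\int_0^\tau\int_\omega E\phi_1<\infty$ (where $E=\Phi+F\geqq 0$), hence $\Phi\in L^1_{loc}([0,T);L^1_{loc}(\Omega))$. Once that is in hand, your argument for a general $\psi\in C_c^2(\Omega)$ via (\ref{gir}) goes through, since now $\int_s^\tau\int_\Omega(U\Delta\psi+\Phi\psi)\,dx\,dt$ genuinely converges as $s\to 0$.

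Your treatment of part (ii) is essentially correct and matches the paper.
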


\begin{proof}
(i) Let $\omega\subset\subset\omega^{\prime}\subset\subset\Omega$ and
$0<s<\tau<T$. We approximate $U$ by $U_{\varepsilon}$ and set $\Phi
+F=E\geqq0,$ so that for $\varepsilon$ small enough,
\[
(U_{\varepsilon})_{t}-\Delta U_{\varepsilon}=E_{\varepsilon}-F_{\varepsilon
}\qquad\text{in }Q_{\omega^{\prime},s/2,\tau}.
\]
Let $\phi_{1}$ be a positive eigenfunction associated to the first eigenvalue
$\lambda_{1}$ of $-\Delta$ in $W_{0}^{1,2}(\omega)$. Multiplying the equation
by $\phi_{1}$ and integrating on over $\omega$, we get, for any $t\in
(s/2,\tau)$,
\[
\frac{d}{dt}\int_{\omega}U_{\varepsilon}(.,t)\phi_{1}dx+\lambda_{1}%
\int_{\omega}U_{\varepsilon}(.,t)\phi_{1}dx=-\int_{\partial\omega
}U_{\varepsilon}(.,t)\frac{\partial\phi_{1}}{\partial\nu}d\sigma+\int_{\omega
}E_{\varepsilon}(.,t)\phi_{1}dx-\int_{\omega}F_{\varepsilon}(.,t)\phi_{1}dx.
\]
We set
\begin{align*}
X(t)  &  =\int_{\omega}U(.,t)\phi_{1}dx,\qquad h(t)=e^{\lambda_{1}t}%
X(t)-\int_{t}^{\tau}\int_{\omega}e^{\lambda_{1}s}F(.,s)\phi_{1}dxd\theta,\\
X_{\varepsilon}(t)  &  =\int_{\omega}U_{\varepsilon}(.,t)\phi_{1}dx,\qquad
h_{\varepsilon}(t)=e^{\lambda_{1}t}X_{\varepsilon}(t)-\int_{t}^{\tau}%
\int_{\omega}e^{\lambda_{1}s}F_{\varepsilon}(.,s)\phi_{1}dxd\theta.
\end{align*}
Then $h_{\varepsilon}$ is nondecreasing on $\left(  s/2,\tau\right)  ,$ and
then $h_{\varepsilon}(\tau)\geqq h_{\varepsilon}(s).$ On the other hand,
$X_{\varepsilon}(t)$ converges to $X(t)$ a.e. $in$ $\left(  0,T\right)  $ as
$\varepsilon\rightarrow0$. Since $U\in C((0,T);L_{loc}^{1}(\Omega)),$ we
deduce that $h(\tau)\geqq h(s)+\int_{s}^{\tau}\int_{\omega}E(.,t)\phi_{1}dx$.
Thus $h$ is nondecreasing on $(0,T)$. From the assumption on $F,$ $X$ has a
limit as $t\rightarrow0$, and $\Phi\in L_{loc}^{1}([0,T);L_{loc}^{1}%
(\Omega)).$Otherwise, for any nonnegative $\psi\in\mathcal{C}_{c}^{2}(\Omega
)$, for any $t<\tau,$ there holds
\begin{equation}
\int_{\Omega}U(.,\tau)\psi dx-\int_{t}^{\tau}\int_{\Omega}(U\Delta\psi
+\Phi\psi)dxdt=\int_{\Omega}U(.,t)\psi dx \label{lir}%
\end{equation}
from (\ref{gir}). Thus $\int_{\Omega}U(.,t)\psi dx$ has a nonnegative limit
$\mu(\psi)$ as $t\rightarrow0,$ and
\[
\int_{\Omega}U(.,\tau)\psi dx-\int_{0}^{\tau}\int_{\Omega}(U\Delta\psi
+\Phi\psi)dxdt=\mu(\psi)
\]
Then $\mu$ is a nonnegative linear functional on $\mathcal{C}_{c}^{2}%
(\Omega),$ thus it extends in a unique way as a Radon measure $u_{0}$ on
$\Omega.$ Finally for any $\varphi\in C_{c}^{\infty}(\Omega\times\left[
0,T\right]  )$, we have
\[
-\int_{t}^{T}\int_{\Omega}(U\varphi_{t}+U\Delta\varphi+\Phi\varphi
)dxdt=\int_{\Omega}U(.,t)\varphi(.,t)dx.
\]
Going to the limit as $t\rightarrow0,$ we deduce (\ref{tru}), since
\[
\left\vert \int_{\Omega}U(.,t)(\varphi(.,t)-\varphi(.,0))dx\right\vert \leqq
Ct\int_{\text{supp}\varphi}U(.,t)dx.
\]
(ii) If $U\in L_{loc}^{\infty}{(}\left[  0,T\right)  {;L_{loc}^{1}(}\Omega)),$
then $\int_{t}^{\tau}\int_{\Omega}\Phi\psi dxdt$ is bounded as $t\rightarrow
0,$ and $\Phi\in L_{loc}^{1}([0,T);L_{loc}^{1}(\Omega))$ from the Fatou Lemma.
The converse is a direct consequence of (i). \medskip
\end{proof}

We deduce a trace property for equation (\ref{un}), inspired by the results of
\cite{MaVe} for equation \ref{bf}, see also \cite{BiChVe}:

\begin{proposition}
\label{dic} For any nonnegative weak solution $u$ of (\ref{un}) in
$Q_{\Omega,T}$, the following conditions are equivalent:\medskip

\noindent(i) $u\in L_{loc}^{\infty}{(}\left[  0,T\right)  {;L_{loc}^{1}%
(}\Omega)),$\medskip

\noindent(ii) $\nabla u\in L_{loc}^{q}(\Omega\times\left[  0,T\right)
)$,\medskip

\noindent(iii) $u(.,t)$ converges weak$^{\ast}$ to some nonnegative Radon
measure $u_{0}$ in $\Omega.$

\noindent And then for any $\tau\in(0,T),$ and any $\varphi\in C_{c}%
^{1}(\Omega\times\left[  0,T\right)  )$,
\begin{equation}
\int_{\Omega}u(.,\tau)\varphi dx+\int_{0}^{\tau}\int_{\Omega}(-u\varphi
_{t}+\nabla u.\nabla\varphi-\left\vert \nabla u\right\vert ^{q}\varphi
)dxdt=\int_{\Omega}\varphi(.,0)du_{0}. \label{hou}%
\end{equation}

\end{proposition}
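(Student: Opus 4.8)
The plan is to prove the chain of equivalences (i) $\Leftrightarrow$ (ii) $\Leftrightarrow$ (iii) by reducing everything to the linear trace result Lemma \ref{L.4.3}, applied to $U=u$ and $\Phi=-|\nabla u|^{q}$, together with the parabolic regularization from Lemma \ref{subso} and Lemma \ref{subcal}. The key structural observation is that $\Phi=-|\nabla u|^{q}$ has a \emph{constant sign} (it is $\leqq 0$), so part (ii) of Lemma \ref{L.4.3} is directly applicable once we know $u\in C((0,T);L_{loc}^{1}(\Omega))$, which is guaranteed by Lemma \ref{subcal}. This immediately gives the equivalence between (i), namely $u\in L_{loc}^{\infty}([0,T);L_{loc}^{1}(\Omega))$, and the statement $|\nabla u|^{q}\in L_{loc}^{1}([0,T);L_{loc}^{1}(\Omega))$, which is exactly (ii) rephrased (recall $\nabla u\in L_{loc}^{2}(Q_{\Omega,T})$ already from Lemma \ref{subcal}, so the only issue is integrability up to $t=0$). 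So (i) $\Leftrightarrow$ (ii) is essentially immediate.

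Next, for (i) $\Rightarrow$ (iii): once $u\in L_{loc}^{\infty}([0,T);L_{loc}^{1}(\Omega))$, part (i) of Lemma \ref{L.4.3} (with $F\equiv 0$, since $\Phi=-|\nabla u|^{q}\leqq 0$ so one can also take $F=0$... wait, we need $\Phi\geqq -F$ with $F\in L_{loc}^1$; but $\Phi\leqq 0$, so we cannot take $F=0$ — instead we use that (i) already gives $\Phi\in L_{loc}^{1}([0,T);L_{loc}^{1}(\Omega))$, hence $F:=-\Phi=|\nabla u|^{q}$ works and is in $L_{loc}^1(\Omega\times[0,T))$). Then Lemma \ref{L.4.3}(i) yields weak$^{\ast}$ convergence of $u(.,t)$ to a nonnegative Radon measure $u_{0}$, i.e. (iii), and also the identity \eqref{tru} for test functions in $C_{c}^{2}(\Omega\times[0,T))$. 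For (iii) $\Rightarrow$ (i): if $u(.,t)$ converges weak$^{\ast}$ to $u_{0}$, then in particular $\int_{\Omega}u(.,t)\psi\,dx$ is bounded as $t\to 0$ for each nonnegative $\psi\in C_{c}^{2}(\Omega)$; feeding this into the identity \eqref{gir} (valid by Lemma \ref{appro}, since $u\in C((0,T);L_{loc}^{1}(\Omega))$ and $\Phi\in L_{loc}^{1}(Q_{\Omega,T})$) and using $\Phi\leqq 0$ shows $\int_{t}^{\tau}\int_{\Omega}(-\Phi)\psi\,dx\,dt$ is bounded as $t\to 0$, so by Fatou $\Phi\in L_{loc}^{1}([0,T);L_{loc}^{1}(\Omega))$, and then Lemma \ref{L.4.3}(ii) gives $u\in L_{loc}^{\infty}([0,T);L_{loc}^{1}(\Omega))$, which is (i). This closes the cycle.

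Finally, to get the precise weak formulation \eqref{hou} with $C_{c}^{1}$ test functions (rather than the $C_{c}^{2}$ ones coming out of Lemma \ref{L.4.3}), I would start from identity \eqref{ffr} of Lemma \ref{subcal}, which holds on $[s,\tau]$ for any $\varphi\in C^{1}((0,T);C_{c}^{1}(\Omega))$:
\[
\int_{\Omega}u(.,\tau)\varphi(.,\tau)\,dx-\int_{\Omega}u(.,s)\varphi(.,s)\,dx+\int_{s}^{\tau}\int_{\Omega}\bigl(-u\varphi_{t}+\nabla u\cdot\nabla\varphi+|\nabla u|^{q}\varphi\bigr)\,dx\,dt=0.
\]
Then let $s\to 0$: the term $\int_{\Omega}u(.,s)\varphi(.,s)\,dx\to\int_{\Omega}\varphi(.,0)\,du_{0}$ by (iii), all the integrals over $(s,\tau)$ converge by the integrability established in (i)/(ii) and dominated convergence, yielding \eqref{hou}. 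One subtlety to handle: \eqref{ffr} is stated for $\varphi\in C^1((0,T);C_c^1(\Omega))$, and for $\varphi\in C_c^1(\Omega\times[0,T))$ one extends by a density/approximation argument in time near $t=0$, using the uniform bound from (i) to control the error, exactly as in the last displayed estimate of the proof of Lemma \ref{L.4.3}.

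The main obstacle I anticipate is not conceptual but careful bookkeeping of the integrability up to $t=0$: one must be sure that $u\in L_{loc}^{\infty}([0,T);L_{loc}^{1}(\Omega))$ and $|\nabla u|^{q}\in L_{loc}^{1}([0,T);L_{loc}^{1}(\Omega))$ are genuinely equivalent (the sign of $\Phi$ is what makes this work, via Lemma \ref{L.4.3}(ii)), and that the passage to the limit $s\to 0$ in \eqref{ffr} is justified for merely $C_{c}^{1}$ test functions depending on time — this requires approximating $\varphi$ near $t=0$ and invoking the uniform $L^1_{loc}$ bound, mirroring the estimate $|\int_{\Omega}U(.,t)(\varphi(.,t)-\varphi(.,0))\,dx|\leqq Ct\int_{\operatorname{supp}\varphi}U(.,t)\,dx$ already used for Lemma \ref{L.4.3}.
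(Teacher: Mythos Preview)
Your proposal is correct and matches the paper's intended approach: the paper places Proposition~\ref{dic} immediately after Lemma~\ref{L.4.3} and gives no separate proof, treating it as a direct application of that lemma with $U=u$, $\Phi=-|\nabla u|^{q}$ (which has constant sign), together with the regularity $u\in C((0,T);L_{loc}^{1}(\Omega))$ from Lemma~\ref{subcal}. Your chain (i)$\Leftrightarrow$(ii) via Lemma~\ref{L.4.3}(ii), (i)$\Rightarrow$(iii) via Lemma~\ref{L.4.3}(i) with $F=|\nabla u|^{q}$, and the passage $s\to 0$ in \eqref{ffr} to obtain \eqref{hou}, is exactly the argument the paper has in mind.

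One simplification: your route (iii)$\Rightarrow$(i) through Fatou and Lemma~\ref{L.4.3}(ii) is correct but unnecessarily indirect. Since $u\geqq 0$, weak$^{\ast}$ convergence of $u(.,t)$ to a Radon measure immediately gives, for any $\omega\subset\subset\Omega$ and any $\psi\in C_{c}^{+}(\Omega)$ with $\psi\equiv 1$ on $\omega$, that $\int_{\omega}u(.,t)\,dx\leqq\int_{\Omega}u(.,t)\psi\,dx$ stays bounded as $t\to 0$; combined with $u\in C((0,T);L_{loc}^{1}(\Omega))$ this is (i).
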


\begin{remark}
If $q\geqq2,$ and $u$ admits a Radon measure $u_{0}$ as a trace, in the sense
of condition (iii), then necessarily
\[
u_{0}\in L_{loc}^{1}(\Omega),\quad\text{and }u\in C\left(  \left[  0,T\right)
;L_{loc}^{1}(\Omega)\right)  .
\]
Indeed condition (ii) implies that $u\in L_{loc}^{2}(\left[  0,T\right)
;W_{loc}^{1,2}(\Omega)),$ and $u_{t}\in L_{loc}^{2}((0,T);W_{loc}%
^{-1,2}(\Omega))+L^{1}\left(  Q_{\omega,T}\right)  ,$ then the conclusion
holds from \cite{Po}. As a first consequence, there exists no weak solution of
equation (\ref{un}) with a a Dirac mass as initial data. This had been shown
in \cite[Theorem 2.2 and Remark 2.1]{Al} for the Dirichlet problem
$(D_{\Omega,T}).$.
\end{remark}

\subsection{Behaviour of Solutions of (\ref{un}), (\ref{R}) in $\Omega_{0}$}

Next we come to problem (\ref{un}), (\ref{R}). In order to see what occurs at
$t=0,$ we extend the solutions on $(-T,T)$ as in \cite{BrFr}.

\begin{proposition}
\label{three}Let $u$ be any weak solution of (\ref{un}), (\ref{R}). Then the
function $\overline{u}$ defined a.e. in $Q_{\Omega,-T,T}$ by
\begin{equation}
\overline{u}(x,t)=\left\{
\begin{array}
[c]{cc}%
u(x,t), & \text{if }\hspace{0.05in}(x,t)\in Q_{\Omega,T},\\
0 & \text{if }\hspace{0.05in}(x,t)\in Q_{\Omega,-T,0},
\end{array}
\right.  \label{uba}%
\end{equation}
is a weak solution of the equation (\ref{un}) in $Q_{\Omega_{0},-T,T}$. If
moreover
\begin{equation}
\lim_{t\rightarrow0}\int_{\Omega}u(.,t)\varphi dx=0,\qquad\forall\varphi\in
C_{c}(\Omega), \label{bra}%
\end{equation}
then $\overline{u}$ is a weak solution of (\ref{un}) in $Q_{\Omega,-T,T}.$
\end{proposition}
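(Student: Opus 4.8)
The plan is to prove both assertions by testing the equation against suitable cutoff functions and using the vanishing condition \eqref{R} to control the behaviour near $t=0$. First I would fix a test function $\varphi \in \mathcal{D}(Q_{\Omega_0,-T,T})$; its support is a compact subset of $\Omega_0 \times (-T,T)$, hence stays away from $\{x=0\}$, and for $t$ close to $-T$ it is automatically zero. Since $u$ is a weak solution in $Q_{\Omega,T}$, I would apply the identity \eqref{fur} (equivalently \eqref{ffr}) on a time interval $(s,\tau)$ with $s>0$ small and $\tau<T$ chosen so that $\varphi(\cdot,t)\equiv 0$ for $t\ge\tau$; this gives
\[
\int_s^\tau\int_\Omega\bigl(-\overline u\,\varphi_t+\nabla\overline u\cdot\nabla\varphi+|\nabla\overline u|^q\varphi\bigr)\,dx\,dt
=\int_\Omega u(\cdot,s)\varphi(\cdot,s)\,dx.
\]
On the negative-time part $Q_{\Omega,-T,0}$ the extension $\overline u$ is identically $0$, so it trivially contributes nothing to the weak formulation there. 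The only thing to check is that the boundary term at time $s$ disappears in the limit $s\to 0$.

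The key step is precisely this limit. Because $\varphi$ has support in $\Omega_0\times(-T,T)$, the function $x\mapsto \varphi(x,s)$ lies in $C_c(\Omega_0)$ for each small $s$, and its supports are contained in a fixed compact $K\subset\Omega_0$; moreover $\varphi(\cdot,s)\to\varphi(\cdot,0)$ uniformly. Splitting $\int_\Omega u(\cdot,s)\varphi(\cdot,s)\,dx$ into $\int_\Omega u(\cdot,s)\varphi(\cdot,0)\,dx + \int_\Omega u(\cdot,s)(\varphi(\cdot,s)-\varphi(\cdot,0))\,dx$, the first integral tends to $0$ by hypothesis \eqref{R}, and the second is bounded by $\|\varphi(\cdot,s)-\varphi(\cdot,0)\|_{L^\infty}\int_K u(\cdot,s)\,dx$; this vanishes provided $\sup_{0<s<T/2}\int_K u(\cdot,s)\,dx<\infty$. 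The uniform $L^1_{loc}$ bound near $t=0$ is not for free, but it follows from \eqref{R} together with Lemma~\ref{L.4.3}(i) applied on a subdomain $\omega'$ with $K\subset\subset\omega'\subset\subset\Omega_0$: there $u$ solves $U_t-\Delta U=\Phi$ with $\Phi=|\nabla u|^q\ge 0$, and \eqref{R} forces the trace $U_0$ to be zero on $\Omega_0$, which by part (i) of that lemma yields $u\in L^\infty_{loc}([0,T);L^1_{loc}(\Omega_0))$ — in particular the desired uniform bound. Passing $s\to 0$ in the identity then shows $\overline u$ satisfies the weak formulation of \eqref{un} against every $\varphi\in\mathcal{D}(Q_{\Omega_0,-T,T})$, i.e. $\overline u$ is a weak solution in $Q_{\Omega_0,-T,T}$. (One also needs $|\nabla\overline u|^q\in L^1_{loc}$ there, which again is given by the same application of Lemma~\ref{L.4.3}, since $\Phi=|\nabla u|^q$ is then in $L^1_{loc}([0,T);L^1_{loc}(\Omega_0))$.)

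For the second assertion, the test functions now range over $\mathcal{D}(Q_{\Omega,-T,T})$, so their supports may meet $\{x=0\}$; the point $(0,0)$ is, however, still excluded since $t$ stays in $(-T,T)$ and the support is compact. The argument is the same, but in the limit $s\to0$ I would use the stronger hypothesis \eqref{bra}: now $\varphi(\cdot,s)\in C_c(\Omega)$, so $\int_\Omega u(\cdot,s)\varphi(\cdot,0)\,dx\to 0$ directly by \eqref{bra}, and the correction term is handled as before once we know $\int_K u(\cdot,s)\,dx$ stays bounded, $K\subset\subset\Omega$ now possibly containing $0$ — which again comes from Lemma~\ref{L.4.3}(i), this time on a subdomain of $\Omega$, with the trace $u_0$ being the zero measure on all of $\Omega$ by \eqref{bra}. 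Hence $\overline u$ is a weak solution of \eqref{un} in $Q_{\Omega,-T,T}$.

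I expect the main obstacle to be a careful justification that \eqref{R} (resp. \eqref{bra}) upgrades to the uniform local $L^1$-in-time bound and to membership of $|\nabla u|^q$ in $L^1_{loc}([0,T);L^1_{loc})$; this is where Lemma~\ref{L.4.3} does the real work, and one must be slightly attentive to the fact that $u\in C((0,T);L^1_{loc}(\Omega))$ (from Lemma~\ref{subcal}) is what makes the hypotheses of that lemma applicable. Everything else is bookkeeping with cutoffs and the elementary estimate $|\int_\Omega u(\cdot,s)(\varphi(\cdot,s)-\varphi(\cdot,0))\,dx|\le Cs\int_{\mathrm{supp}\,\varphi}u(\cdot,s)\,dx$ already used in the proof of Lemma~\ref{L.4.3}.
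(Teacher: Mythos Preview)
Your approach is essentially the same as the paper's: the crux is that $|\nabla u|^q\in L^1_{loc}(\Omega_0\times[0,T))$, after which the limit in the weak formulation is routine. The paper passes to the limit via time cutoffs $\zeta_k(t)=\zeta(kt)$ with $k\to\infty$, whereas you use the boundary-term identity \eqref{ffr} on $(s,\tau)$ and send $s\to 0$; these are interchangeable devices.

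There is, however, a sign slip that breaks your invocation of Lemma~\ref{L.4.3}(i). Equation \eqref{un} reads $u_t-\Delta u=-|\nabla u|^q$, so $\Phi=-|\nabla u|^q\leqq 0$, not $\Phi\geqq 0$. With the correct sign, the hypothesis $\Phi\geqq -F$ of part~(i) would require $F\geqq |\nabla u|^q$ with $F\in L^1_{loc}(\Omega\times[0,T))$, which is exactly what you are trying to prove. The repair is immediate and is what the paper does: the bound $u\in L^\infty_{loc}([0,T);L^1_{loc}(\Omega_0))$ follows \emph{directly} from \eqref{R} (choose $\varphi\in C_c(\Omega_0)$ with $\varphi\geqq\chi_K$; then $\int_K u(\cdot,t)\leqq\int u(\cdot,t)\varphi\to 0$), and then Proposition~\ref{dic} --- equivalently Lemma~\ref{L.4.3}(ii), since $\Phi$ has constant sign --- yields $|\nabla u|^q\in L^1_{loc}(\Omega_0\times[0,T))$. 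With this correction your argument goes through, including the identification $\nabla\overline u=\chi_{(0,T)}\nabla u$ needed for $|\nabla\overline u|^q\in L^1_{loc}(Q_{\Omega_0,-T,T})$.
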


\begin{proof}
By assumption, $u\in L_{loc}^{1}(\left[  0,T\right)  \times\Omega_{0})$, hence
$\overline{u}\in{L_{loc}^{1}(}Q_{\Omega_{0},-T,T}).$ Then we can define
$\nabla\overline{u}\in\mathcal{D}^{\prime}(Q_{\Omega_{0},-T,T})$ and for any
$\varphi\in\mathcal{D}(Q_{\Omega_{0},-T,T}),$%
\[
<\nabla\overline{u},\varphi>=-\int_{-T}^{T}\int_{\Omega}\overline{u}%
\nabla\varphi dxdt=-\int_{0}^{T}\int_{\Omega}u\nabla\varphi dxdt.
\]
For any $k\geqq1,$ we consider a function $\zeta_{k}$ on $\left[
0,\infty\right)  $ such that
\begin{equation}
\zeta_{k}(t)=\zeta(kt),\text{ where }\zeta\in C^{\infty}\left(  \left[
0,\infty\right)  \right)  ,\quad\zeta(\left[  0,\infty\right)  )\subset\left[
0,1\right]  ,\quad\zeta\equiv0\text{ in }\left[  0,1\right]  ,\quad\zeta
\equiv1\text{ in }\left[  2,\infty\right)  . \label{eta}%
\end{equation}
Since $u$ is a weak solution of (\ref{un}), there holds%
\begin{equation}
-\int_{0}^{T}\int_{\Omega}u\nabla(\varphi\zeta_{k})dxdt=\int_{0}^{T}%
\int_{\Omega}\varphi\zeta_{k}\nabla udxdt. \label{aut}%
\end{equation}
From (\ref{R}), we see that $u\in L_{loc}^{\infty}{(}\left[  0,T\right)
{;L_{loc}^{1}(}\Omega_{0})),$ hence $\left\vert \nabla u\right\vert \in
L_{loc}^{q}(\Omega_{0}\times\left[  0,T\right)  ),$ from Proposition
\ref{dic}. Then we can go to the limit in (\ref{aut}) as $k\rightarrow\infty$
from the Lebesgue theorem, hence
\[
-\int_{0}^{T}\int_{\Omega}u\nabla\varphi dxdt=\int_{0}^{T}\int_{\Omega}%
\varphi\nabla udxdt.
\]
Thus $\nabla\overline{u}\in{L_{loc}^{q}(}Q_{\Omega_{0},-T,T})$ and
$\nabla\overline{u}(x,t)=\chi_{\left(  0,T\right)  }\nabla u(x,t)$; hence also
$\nabla\overline{u}\in$ ${L_{loc}^{2}(}Q_{\Omega_{0},-T,T})$ from Lemma
\ref{subcal}, and for any $\varphi\in\mathcal{D}(Q_{\Omega_{0},-T,T}),$
\begin{equation}
\int_{-T}^{T}\int_{\Omega}(-\overline{u}\varphi_{t}+\nabla\overline{u}%
.\nabla\varphi+|\nabla\overline{u}|^{q}\varphi)dxdt=\int_{0}^{T}\int_{\Omega
}(-u\varphi_{t}+\nabla u.\nabla\varphi+|\nabla u|^{q}\varphi)dxdt. \label{cho}%
\end{equation}
Moreover
\begin{align*}
0  &  =\int_{0}^{T}\int_{\Omega}(-u(\varphi\zeta_{k})_{t}+\nabla
u.\nabla(\varphi\zeta_{k})+|\nabla u|^{q}\varphi\zeta_{k}dxdt\\
&  =-\int_{0}^{T}\int_{\Omega}u\varphi(\zeta_{k})_{t}dxdt+\int_{0}^{T}%
\int_{\Omega}(-u\varphi_{t}\zeta_{k}+\nabla u.\nabla(\varphi\zeta_{k})+|\nabla
u|^{q}\varphi\zeta_{k}dxdt.
\end{align*}
As $k\rightarrow\infty,$ the first term in the right hand side tends to $0$
from (\ref{R}), since
\begin{equation}
\left\vert \int_{0}^{T}\int_{\Omega}u\varphi(\zeta_{k})_{t}dxdt\right\vert
\leqq Ck\int_{1/k}^{2/k}\int_{\Omega}u\varphi dxdt\leqq C\sup_{t\in\left[
1/k,2/k\right]  }\int_{\text{supp}\varphi}u(.,t)dx, \label{chu}%
\end{equation}
and we can go to the limit in the second term, since $\left\vert \nabla
u\right\vert \in L_{loc}^{q}(\Omega_{0}\times\left[  0,T\right)  )$. Thus from
(\ref{cho}), $\overline{u}$ is a weak solution of equation (\ref{un}) in
$Q_{\Omega_{0},-T,T}$. If (\ref{bra}) holds, the same result holds in $\Omega$
instead of $\Omega_{0}.$\medskip
\end{proof}

\begin{corollary}
\label{sous}Assume $1<q\leqq2.$ Then any weak solution $u$ of (\ref{un}),
(\ref{R}) satisfies $u\in C^{2,1}(\Omega_{0}\times\left[  0,T\right)  )$ and
$u(x,0)=0,\;\forall x\in\Omega_{0}.$ \medskip

If (\ref{bra}) holds, then $u\in C^{2,1}(\Omega\times\left[  0,T\right)  )$
and $u(x,0)=0,\;\forall x\in\Omega.$
\end{corollary}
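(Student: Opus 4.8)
\textbf{Proof plan for Corollary \ref{sous}.}
The plan is to apply Theorem \ref{T.2.1} to the extended function $\overline{u}$ produced by Proposition \ref{three}, working on the punctured cylinder $Q_{\Omega_0,-T,T}$ (or on $Q_{\Omega,-T,T}$ under the extra assumption (\ref{bra})). First I would note that by Proposition \ref{three}, since $u$ is a weak solution of (\ref{un}), (\ref{R}), the function $\overline{u}$ defined by (\ref{uba}) is a weak \emph{nonnegative} solution of equation (\ref{un}) in $Q_{\Omega_0,-T,T}$; nonnegativity is inherited from $u\geqq0$ on $Q_{\Omega,T}$ and the value $0$ on the negative-time part. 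Since $1<q\leqq2$, Theorem \ref{T.2.1}(i) applies on the domain $\Omega_0$ with the time interval $(-T,T)$: it gives $\overline{u}\in C^{2,1}(Q_{\Omega_0,-T,T})$, and in particular $\overline{u}$ is continuous at every point $(x,0)$ with $x\in\Omega_0$.

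Next I would extract the conclusion about $u$ itself. On $Q_{\Omega_0,-T,0}$ we have $\overline{u}\equiv 0$, so by continuity of $\overline{u}$ across $t=0$ we get $\overline{u}(x,0)=0$ for every $x\in\Omega_0$; since $\overline{u}=u$ on $Q_{\Omega,T}$ and $u$ agrees with the (continuous) restriction $\overline{u}|_{\Omega_0\times[0,T)}$, this yields $u\in C^{2,1}(\Omega_0\times[0,T))$ and $u(x,0)=0$ for all $x\in\Omega_0$. The argument is literally the same, with $\Omega_0$ replaced by $\Omega$ throughout, once one knows $\overline{u}$ solves (\ref{un}) in the full cylinder $Q_{\Omega,-T,T}$; and Proposition \ref{three} guarantees exactly this under hypothesis (\ref{bra}). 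So the second assertion follows from Theorem \ref{T.2.1}(i) applied on $\Omega$ with interval $(-T,T)$, giving $\overline{u}\in C^{2,1}(Q_{\Omega,-T,T})$, hence $u\in C^{2,1}(\Omega\times[0,T))$ and $u(x,0)=0$ for all $x\in\Omega$.

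There is essentially no obstacle here: the corollary is a bookkeeping consequence of the two previous results, the only points requiring a word of care being (a) that the regularity theorem is stated for cylinders over an arbitrary domain and an arbitrary open time interval, so it legitimately applies to $Q_{\Omega_0,-T,T}$ and $Q_{\Omega,-T,T}$ (the proof of Theorem \ref{T.2.1} is purely local in $(x,t)$, so this causes no trouble), and (b) the trivial identification of $u$ with $\overline{u}$ on the relevant region. I do not expect any serious difficulty; the proof is a couple of lines.
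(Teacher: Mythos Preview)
Your proposal is correct and follows exactly the paper's approach: the paper's proof is the single sentence ``It follows directly from Proposition \ref{three} and Theorem \ref{T.2.1} applied to $\overline{u}$,'' and your write-up simply spells out the details of that sentence.
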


\begin{proof}
It follows directly from From Proposition \ref{three} and Theorem \ref{T.2.1}
applied to $\overline{u}.$
\end{proof}

\section{The critical or supercritical case\label{3}}

\subsection{Removability in the range $q_{\ast}<q<2$}

For any $1<q<2$ we can compare the solutions with the function $\Gamma$
defined at (\ref{ama}).

\begin{lemma}
\label{L.3.2}Let $1<q<2.$ Let $u$ be any nonnegative weak solution of
(\ref{un}) in $Q_{\Omega,T}$, satisfying (\ref{R}).

\noindent(i) Let $r>0$ such that $B_{r}\subset\Omega.$Then there exists
$\tau_{1}>0$ (depending on $u,r)$ such that
\begin{equation}
0\leqq u(x,t)\leqq\Gamma(\left\vert x\right\vert )\qquad\forall(x,t)\in
Q_{B_{r\backslash\left\{  0\right\}  ,\tau_{1}}}. \label{3.3}%
\end{equation}
(ii) If $\Omega=\mathbb{R}^{N},$ then
\begin{equation}
0\leqq u(x,t)\leqq\Gamma(\left\vert x\right\vert )\qquad\forall(x,t)\in
Q_{_{\mathbb{R}^{N}\backslash\left\{  0\right\}  ,\tau_{1}}}. \label{gam}%
\end{equation}

\end{lemma}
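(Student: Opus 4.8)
The idea is to use $\Gamma(\left\vert x\right\vert)$ as a barrier from above near the singular point, exploiting that $\Gamma$ is a stationary supersolution of (\ref{un}) for $1<q<2$ (as recalled before (\ref{ama})) and that it blows up at $x=0$ in a way compatible with the (very weak) control we have from (\ref{R}). First I would fix $r>0$ with $B_r\subset\subset\Omega$, and work on the shell $A_\rho=\{\rho<\left\vert x\right\vert<r\}$ for a small $\rho\in(0,r)$. The outer boundary $\left\vert x\right\vert=r$ and the inner boundary $\left\vert x\right\vert=\rho$ each need handling. On the inner sphere, $\Gamma(\rho)=\gamma_q\rho^{-a}\to\infty$ as $\rho\to0$, so for any fixed later time-interval we will eventually have $u\le\Gamma(\rho)$ there once we know $u$ is (locally) bounded away from $0$; but near $t=0$ this is delicate, which is exactly where (\ref{R}) must be used. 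On the outer sphere we need a uniform-in-small-$t$ bound $u(x,t)\le\Gamma(r)$ for $\left\vert x\right\vert=r$ and $t\le\tau_1$; since $u$ is continuous on $\Omega_0\times[0,T)$ with $u(x,0)=0$ there by Corollary \ref{sous} (note $1<q<2$ here), this holds for $\tau_1$ small depending on $r$ and $u$.

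The core comparison argument: on the cylinder $Q_{A_\rho,\tau_1}$ both $u$ and $\Gamma$ solve/supersolve (\ref{un}) classically (Corollary \ref{sous} gives $u\in C^{2,1}$ on $\Omega_0\times[0,T)$), so by the parabolic comparison principle it suffices to control $u-\Gamma$ on the parabolic boundary: the initial slice $t=0$ (there $u=0\le\Gamma$), the outer lateral boundary $\left\vert x\right\vert=r$ (handled above for $t\le\tau_1$), and the inner lateral boundary $\left\vert x\right\vert=\rho$. For the inner boundary I would \emph{not} try to bound $u$ pointwise; instead I would let $\rho\to0$. The mechanism is: apply comparison on $Q_{A_\rho,\tau_1}$ to get $u\le\Gamma(\left\vert x\right\vert)$ on $\{\left\vert x\right\vert\ge\rho\}$ \emph{provided} $u\le\Gamma(\rho)$ on $\{\left\vert x\right\vert=\rho\}\times(0,\tau_1)$; if this fails for a given $\rho$, shrink $\rho$. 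Alternatively, and more robustly, add a perturbation: compare $u$ with $\Gamma(\left\vert x\right\vert)+h(t)$ or with $\Gamma_\epsilon(\left\vert x\right\vert)=\Gamma(\left\vert x\right\vert-\epsilon)$-type barriers that are $+\infty$ on $\left\vert x\right\vert=\rho$, then pass to the limit. The cleanest route is probably: for each small $\rho$, use the supersolution $w$ from Lemma \ref{super} (or $Y^G_\eta$ from Theorem \ref{L.3.3}) on $B_\rho$ to absorb the mass near $0$, i.e.\ dominate $u$ on a neighborhood of $0$ by something that, combined with $\Gamma$ outside, gives a global supersolution on $B_r$; then comparison on $Q_{B_r,\tau_1}$ yields (\ref{3.3}).

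The step I expect to be the main obstacle is making the inner-boundary/near-singularity handling rigorous: the only hypothesis at $t=0$ near $x=0$ is (\ref{R}), which is a weak-$^*$ statement against test functions supported in $\Omega_0$ and gives essentially no pointwise control of $u$ as $(x,t)\to(0,0)$. The resolution is that $\Gamma$ itself is unbounded at $0$, so we never need a pointwise bound there; we need only that, on the parabolic boundary of a shell $A_\rho$ (which stays away from $0$), $u$ is dominated by $\Gamma$, and for $\rho$ small enough $\Gamma(\rho)$ exceeds any finite bound for $u$ on $\{\left\vert x\right\vert=\rho\}\times[0,\tau_1]$ coming from continuity of $u$ on $\Omega_0\times[0,T)$. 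So the logical order is: (1) invoke Corollary \ref{sous} to get $u\in C^{2,1}(\Omega_0\times[0,T))$, $u(\cdot,0)=0$ on $\Omega_0$; (2) choose $\tau_1$ so small that $u\le\Gamma(r)$ on $\{\left\vert x\right\vert=r\}\times[0,\tau_1]$; (3) for each small $\rho$, note $u$ is bounded on the compact set $\{\left\vert x\right\vert=\rho\}\times[0,\tau_1]$, pick $\rho$ with $\Gamma(\rho)$ larger than that bound (or use the $w_{\lambda,s}$ barrier to avoid even that); (4) apply the comparison principle on $Q_{A_\rho,\tau_1}$ to conclude $u\le\Gamma$ on $Q_{A_\rho,\tau_1}$, and since this holds for all small $\rho$, it holds on $Q_{B_r\setminus\{0\},\tau_1}$, proving (i). For (ii), the same proof works with $r$ replaced by $\infty$: when $\Omega=\mathbb{R}^N$ there is no outer boundary to worry about, but one must check that $\Gamma(\left\vert x\right\vert)\to0$ as $\left\vert x\right\vert\to\infty$ dominates $u$ at infinity — here one uses that $u$ has a removable-type behavior or a decay estimate at spatial infinity (from the earlier regularity/universal estimates), or simply that $u(\cdot,0)=0$ on all of $\mathbb{R}^N\setminus\{0\}$ plus a barrier argument at infinity, so that for small $\tau_1$ the comparison still closes; I would state (ii) as the limiting case and indicate the minor additional argument at $\left\vert x\right\vert\to\infty$.
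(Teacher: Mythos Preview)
Your plan contains the paper's argument, but you should commit to the shifted barrier rather than leave it as one option among several. The paper sets $F_\eta(x)=\Gamma(|x|-\eta)$ on the shell $\Omega_\eta=B_r\setminus\overline{B_\eta}$ and checks directly that $-\Delta F_\eta+|\nabla F_\eta|^q\geq 0$, so $F_\eta$ is a stationary supersolution there. Since $F_\eta=+\infty$ on $|x|=\eta$ while $u$ is continuous (hence bounded) on $\overline{\Omega_\eta}\times[0,\tau_1]$ by Corollary~\ref{sous}, one has $u\leq F_\eta$ on $\partial B_{\eta+\varepsilon}\times[0,\tau_1]$ for all small $\varepsilon$; comparison on $\Omega_{\eta+\varepsilon}$ then gives $u\leq F_\eta$, and one lets $\varepsilon\to 0$ and then $\eta\to 0$. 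Your ``Option~A'' in step~(3) --- pick $\rho$ so that $\Gamma(\rho)$ exceeds $\sup_{\{|x|=\rho\}\times[0,\tau_1]}u$ --- is circular as stated: that supremum may blow up as $\rho\to 0$ at a rate you do not yet control, and nothing prior to this lemma tells you it grows more slowly than $\Gamma(\rho)=\gamma_q\rho^{-a}$. The shift $\Gamma(|x|-\eta)$ (which you do mention in passing) is precisely the device that removes this circularity, because it is $+\infty$ on the inner sphere regardless of how large $u$ is there.

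For (ii) your ``barrier argument at infinity'' is correct in spirit but needs to be made concrete, and it is Lemma~\ref{super} rather than a decay estimate that does the job. The paper centers the supersolution $w_{1,1}$ at each $x_0$ with $|x_0|\geq 2$ to obtain the uniform bound $u(x_0,t)\leq e^{c(1)t+1/\alpha_1(0)}$, so $u$ is bounded on $(\mathbb{R}^N\setminus B_2)\times(0,T)$. With $u$ bounded at infinity, one compares with $F_\eta$ on the unbounded exterior $\mathbb{R}^N\setminus\overline{B_\eta}$ (the paper invokes the comparison principle of \cite{GiGuKe} for this unbounded domain), obtaining $u\leq F_\eta$ there, and $\eta\to 0$ gives~(\ref{gam}).
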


\begin{proof}
(i) For any $\eta\in(0,r)$, we put $\Omega_{\eta}=B_{r}\backslash
\overline{B_{\eta}},$ and we set $F_{\eta}(x)=\Gamma(\left\vert x\right\vert
-\eta),$ for any $x\in\Omega_{\eta}.$ We find
\[
-\Delta F_{\eta}+|\nabla F_{\eta}|^{q}=\gamma_{q}a\frac{(N-1)}{\left\vert
x\right\vert }(\left\vert x\right\vert -\eta)^{-(a+1)}\geqq0,
\]
thus $F_{\eta}$ is a super-solution of (\ref{un}) in $Q_{\Omega_{\eta},\infty
}$. From Theorem \ref{T.2.1} and Proposition \ref{three}, $u\in\mathcal{C}%
^{2,1}(Q_{\Omega,T})\cap C(\Omega_{0}\times\left[  0,T\right)  )$ and
$u(.,0)=0$. Then there exists $\tau_{1}<T$ such that $\max_{\underset
{\left\vert x\right\vert =r}{t\in\left[  0,\tau_{1}\right]  }}u(t,x)<1$, and
$u$ is bounded in $\overline{\Omega_{\eta}}\times\left[  0,\tau_{1}\right]  $.
For any $\varepsilon>0$ small enough, we have $u(x,t)\leqq F_{\eta}(x)$ on
$\partial B_{\eta+\varepsilon}\times\left[  0,\tau_{1}\right]  .$ From the
comparison principle in $Q_{\Omega_{\eta+\varepsilon},\tau_{1}}$, we get
$u(x,t)\leqq F_{\eta}(x)$ in $\Omega_{\eta}\times\left[  0,\tau_{1}\right]  ,$
as $\varepsilon\rightarrow0.$ As $\eta\rightarrow0$, we deduce (\ref{3.3}%
).\medskip

(ii) From Lemma \ref{super}, for any $x_{0}\in\mathbb{R}^{N}\backslash B_{2},$
the function $x\mapsto w_{1,1}(x-x_{0})$ is a supersolution of equation
(\ref{un}) in $Q_{B(x_{0},1),\infty}$, then in particular $u(t,x_{0})\leqq
e^{c(1)t+1/\alpha_{1}(0)},$ thus $u$ bounded in $Q_{\mathbb{R}^{N}\backslash
B_{2},T}.$ From the comparison principle in $\mathbb{R}^{N}\backslash
\overline{B_{\eta}}$ for any $\eta\in(0,1),$ see \cite{GiGuKe}, we find
$u(x,t)\leqq F_{\eta}(x)$ in $Q_{\mathbb{R}^{N}\backslash\overline{B_{\eta}%
},T},$ hence (\ref{gam}) holds as $\eta\rightarrow0.$\medskip
\end{proof}

As a direct consequence we get a simple proof of Theorem \ref{stop} in case
$q_{\ast}<q<2:$

\begin{theorem}
\label{T.3.1} Let $q_{\ast}<q<2$. Suppose that $u$ is a nonnegative weak
solution of (\ref{un}),(\ref{R}). Then $u\in C(\Omega\times\left[  0,T\right)
)$ and $u(x,0)=0,\;\forall x\in\Omega.$
\end{theorem}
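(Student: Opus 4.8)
The plan is to combine the pointwise bound of Lemma \ref{L.3.2} with the interior regularity theory of Theorem \ref{T.2.1}, by extracting from the upper barrier $\Gamma(|x|)$ enough decay to control the local $L^\infty$ norm of $u$ near $t=0$ and then to upgrade this to genuine continuity at the singular point. First, fix $r>0$ with $B_{r}\subset\subset\Omega$ and take $\tau_{1}>0$ as in Lemma \ref{L.3.2}(i), so that $0\leqq u(x,t)\leqq\Gamma(|x|)=\gamma_{q}|x|^{-a}$ on $Q_{B_{r}\setminus\{0\},\tau_{1}}$ with $a=(2-q)/(q-1)$. The key point is that when $q>q_{\ast}=(N+2)/(N+1)$ one has $a<N$, so that $\Gamma(|x|)\in L^{1}(B_{r})$ and more generally $\Gamma(|x|)\in L^{p}(B_{r})$ for every $p<N/a$; in particular the constant exponent condition $q_{\ast}<q$ is exactly what makes the barrier locally integrable with a little room to spare.

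Next I would use the parabolic equation satisfied by $\overline{u}$ (the zero-extension of $u$, which is a weak solution of \eqref{un} in $Q_{\Omega_{0},-T,T}$ by Proposition \ref{three}) to gain a uniform bound. Since $u\in\mathcal{C}^{2,1}(Q_{\Omega,T})$ and is subcaloric, I would represent $u$ on a ball $B_{r/2}$ for $t\in(0,\tau_{1}/2)$ against the heat semigroup: comparing $u$ with the solution of the heat equation having initial datum $\Gamma(|x|)\chi_{B_{r}}$ and lateral datum $\Gamma(r)$ on $\partial B_{r}$, and using the comparison principle together with the sign of $-|\nabla u|^{q}\leqq0$, one gets $u(x,t)\leqq (e^{t\Delta}\Gamma\chi_{B_{r}})(x)+C$. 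Because $\Gamma\chi_{B_{r}}\in L^{p}(B_{r})$ for some $p>1$, the smoothing of the heat kernel gives $\|u(\cdot,t)\|_{L^{\infty}(B_{r/4})}\leqq C t^{-N/2p}\|\Gamma\|_{L^{p}(B_r)}+C$, which is locally bounded but may blow up; to avoid this I instead bound $u$ directly using $u\leqq e^{t\Delta}(\Gamma\chi_{B_r}) \to \Gamma\chi_{B_r}$ in $L^1$, together with the fact that the heat extension of an $L^p$ function with $p<N/a$ stays in $L^{p}_{loc}$ and in fact $u$ is already in $\mathcal{C}^{2,1}$ away from $0$; the real gain is a bound on $\int_{B_{r}}u(\cdot,t)\,dx$ uniform in $t\in(0,\tau_1)$, which combined with subcaloricity (mean-value inequality for subsolutions of the heat equation) yields $u(x,t)\leqq C(\text{dist to parabolic boundary})$ and hence a uniform interior $L^\infty$ bound on $Q_{B_{r/2},\tau_1}$. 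Feeding this uniform bound into the Schauder-type estimate \eqref{gff} of Theorem \ref{T.2.1} gives uniform $C^{2+\gamma,1+\gamma/2}$ bounds for $u$ on compact subsets of $B_{r/2}\times(0,\tau_1)$ staying away from $\{0\}$, and in fact on $\{(x,t): |x|\geqq\rho\}$ for each $\rho>0$.

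Finally, to conclude continuity at $(0,0)$ and $u(0,0)=0$: I would show that the uniform bound actually forces $\sup_{B_\rho\times(0,t)}u\to 0$. Using the barrier again, $u(x,t)\leqq\Gamma(|x|)$ is small for $|x|$ bounded away from $0$ is false — instead $\Gamma$ is large near $0$, so one needs a complementary estimate near the axis. Here I would exploit once more that $a<N$: for the zero-extension $\overline u$ on $Q_{B_r,-\tau_1,\tau_1}$, apply the representation formula of Lemma \ref{ent}(iii) (or simply the fact that $\overline u$ is a nonnegative subsolution of the heat equation on $B_r\times(-\tau_1,\tau_1)$ which vanishes for $t<0$), giving $\overline u(x,t)\leqq \int_{B_r} G_{B_r}(x,y,t+\tau_1/2)\,\overline u(y,-\tau_1/2)\,dy + \ldots = 0 + (\text{contribution of the source on }(0,t))$, but $\overline u(\cdot,s)\to 0$ in $L^1_{loc}(\Omega_0)$ and is dominated by $\Gamma\in L^1(B_r)$, so by dominated convergence $\int_{B_r}G(x,y,t)\,u(y,\sigma)\,dy\to 0$ as $t\to0$ uniformly for $x$ in compacts. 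Hence $\limsup_{t\to0}\sup_{B_{r/2}}u=0$, which (together with the already-established interior continuity) gives $u\in C(B_{r/2}\times[0,\tau_1))$ with $u(x,0)=0$; since $r$ was arbitrary this yields $u\in C(\Omega\times[0,T))$ and $u\equiv0$ at $t=0$. The main obstacle I anticipate is making the near-axis argument rigorous: $\overline u$ is only a \emph{subsolution} of the heat equation across $t=0$ (the term $|\nabla u|^{q}$ has the right sign, but one must check the extension does not create a measure at $t=0$), and one must verify that $\Gamma\in L^1(B_r)$ genuinely controls the heat propagation so that no singular mass survives in the limit — this is precisely where the threshold $q=q_\ast$ enters and where the argument would break for $q<q_\ast$.
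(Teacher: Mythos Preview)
Your key observation is exactly right: when $q>q_{\ast}$ one has $a=(2-q)/(q-1)<N$, so the barrier $\Gamma(|x|)=\gamma_{q}|x|^{-a}$ from Lemma~\ref{L.3.2} lies in $L^{1}(B_{r})$. But after that point the proposal loses its way. The long detour through heat-semigroup smoothing, mean-value inequalities, and uniform $L^{\infty}$ bounds is unnecessary and never comes to a clean conclusion; in particular your final claim that $\int_{B_{r}}G(x,y,t)\,u(y,\sigma)\,dy\to 0$ as $t\to 0$ is not correct (as $t\to 0$ the kernel concentrates to $\delta_{x}$, so the integral tends to $u(x,\sigma)$, not to $0$), and the attempt to prove $\sup_{B_{r/2}}u(\cdot,t)\to 0$ puts the conclusion (continuity at $(0,0)$) ahead of the argument. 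You also rely on $\overline{u}$ being subcaloric on $B_{r}\times(-\tau_{1},\tau_{1})$, which Proposition~\ref{three} does \emph{not} give across $x=0$; you yourself flag this as the ``main obstacle,'' and it is never resolved.

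The paper's argument is much shorter and avoids all of this. From $u\leqq\Gamma(|x|)$ one gets immediately $\int_{B_{r}}u(\cdot,t)\,dx\leqq\int_{B_{r}}\Gamma<\infty$, so $u\in L^{\infty}((0,\tau_{1});L^{1}(B_{r}))$. Proposition~\ref{dic} then gives a weak$^{\ast}$ trace $\mu$; by (\ref{R}) it is concentrated at $0$, hence $\mu=k\delta_{0}$. If $k>0$, test against $\psi_{\eta}$ with $\psi_{\eta}(0)=1$ and support in $B_{\eta}$: the barrier gives $\int u(\cdot,t)\psi_{\eta}\leqq\int_{B_{\eta}}\Gamma\leqq k/2$ for $\eta$ small (this uses $a<N$ again, since $\int_{B_{\eta}}\Gamma\sim\eta^{N-a}\to 0$), contradicting $\int u(\cdot,t)\psi_{\eta}\to k$. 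So $k=0$, i.e.\ (\ref{bra}) holds, and Corollary~\ref{sous} finishes.

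In fact the dominated-convergence idea you gesture at can be made to work and is even simpler than the paper's trace argument: by Corollary~\ref{sous} one already knows $u(x,t)\to 0$ for every $x\neq 0$ as $t\to 0$, and $u(\cdot,t)\leqq\Gamma\in L^{1}(B_{r})$, so $\int_{B_{r}}u(\cdot,t)\,dx\to 0$ by dominated convergence; combined with (\ref{R}) this is (\ref{bra}), and Corollary~\ref{sous} again concludes. If you strip away the heat-kernel machinery and state this one line, your proof is complete.
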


\begin{proof}
The assumption $q_{\ast}<q$ is equivalent to $a<N.$ Let $B_{r}\subset\Omega$
and $\tau_{1}$ defined at Lemma \ref{L.3.2}; we find for any $t\in\left(
0,\tau_{1}\right)  ,$
\[
\int_{B_{r}}u(.,t)dx\leqq\int_{B_{r}}\Gamma(\left\vert x\right\vert
)dx\leqq\frac{\gamma_{q}\left\vert \partial B_{1}\right\vert r^{N-a}}{N-a};
\]
then $u\in L^{\infty}((0,\tau_{1});L^{1}(B_{r})).$ Applying Proposition
\ref{dic}, $u(.,t)$ converges weak$^{\ast}$ to a measure $\mu$ on $B_{r}:$%
\[
\lim_{t\rightarrow0}\int_{B_{r}}u(.,t)\psi dx=\int_{B_{r}}\psi d\mu
,\qquad\forall\psi\in C_{c}\left(  B_{r}\right)  .
\]
From (\ref{R}), $\mu$ is concentrated at $0$ and then $\mu=k\delta_{0}$ for
some $k\geqq0.$ Suppose that $k>0,$ we choose $\psi_{\eta}$ such that
$\psi_{\eta}(0)=1,$ $\psi_{\eta}(B_{r})\subset\left[  0,1\right]  ,$
supp$\psi_{\eta}\subset B_{\eta},$ with $\eta\in\left(  0,r\right)  $ small
enough such that $\gamma_{q}\left\vert \partial B_{1}\right\vert \eta
^{N-a}\leqq(N-a)k/2$. For any $t\in(0,\tau_{1})$, lemma \ref{L.3.2} yields
\begin{equation}
\int_{B_{r}}u(.,t)\psi_{\eta}dx\leqq\int_{B\eta}\Gamma(\left\vert x\right\vert
)dx\leqq\frac{k}{2}. \label{3.10}%
\end{equation}
As $t$ tends to $0$ the left-hand side tends to $k,$ which is a contradiction.
Then $k=0,$ hence for any $\psi\in C_{c}^{\infty}\left(  B_{r}\right)  ,$%
\begin{equation}
\lim_{t\rightarrow0}\int_{B_{r}}u(.,t)\psi dx=0, \label{bou}%
\end{equation}
and we conclude from Corollary \ref{sous}.
\end{proof}

\subsection{Removability in the whole range $q_{\ast}\leqq q<2$}

The proof of Theorem \ref{T.3.1} is not valid in the critical case $q=q_{\ast
},$ since the function $x\longmapsto\Gamma(\left\vert x\right\vert
)=\gamma_{q}\left\vert x\right\vert ^{-N}$ is not integrable near $0.$ Then we
use another argument of comparison with the large solutions constructed at
Theorem \ref{L.3.3}, valid for any $1<q<2:$

\begin{proposition}
\label{L.3.4} Let $1<q<2$. Under the assumptions of Theorem \ref{L.3.3} with
$G=B_{n}$ $(n\geqq1)$ the functions $Y_{\eta}^{B_{n}}$ converge as
$n\rightarrow\infty$ to a radial solution $Y_{\eta}$ of problem
\begin{equation}
\left\{
\begin{array}
[c]{c}%
(Y_{\eta})_{t}-\Delta Y_{\eta}+|\nabla Y_{\eta}|^{q}=0,\qquad\text{in}%
\hspace{0.05in}Q_{\infty},\\
Y_{\eta}(x,0)=\left\{
\begin{array}
[c]{c}%
\infty\quad\text{if }x\in B_{\eta},\\
0\quad\text{if not.}%
\end{array}
\right.
\end{array}
\right.  \label{py}%
\end{equation}
Then, as $\eta\rightarrow0,$ $Y_{\eta}$ converges to a radial self-similar
solution $Y$ of equation (\ref{un}) in $Q_{\mathbb{R}^{N},\infty}$, such that%
\begin{equation}
Y(x,t)\leqq\Gamma\left(  \left\vert x\right\vert \right)  ,\qquad\text{ in
}Q_{\infty}, \label{enf}%
\end{equation}%
\begin{equation}
Y(x,t)\leqq C(1+t^{-\frac{1}{q-1}}),\qquad\text{ in }Q_{\infty}, \label{vir}%
\end{equation}
where $C=C(N,q),$ and
\begin{equation}
\lim_{t\rightarrow0}(\sup_{\left\vert x\right\vert \geqq r}Y(x,t))=0.
\label{dir}%
\end{equation}
If $q_{\ast}\leqq q<2,$ then $Y=0.$
\end{proposition}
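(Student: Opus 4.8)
The plan is to prove Proposition \ref{L.3.4} in two stages: first establish that the monotone limits $Y_\eta^{B_n} \to Y_\eta \to Y$ exist and produce a self-similar solution enjoying the stated bounds, then show that self-similarity together with the bound \eqref{enf} forces $Y \equiv 0$ when $q_\ast \leqq q < 2$.

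\medskip

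\textbf{Construction of the limits.} First I would note that the large solutions $Y_\eta^{B_n}$ from Theorem \ref{L.3.3} are nondecreasing in $n$: each $Y_\eta^{B_n}$ is dominated by $Y_\eta^{B_{n+1}}$ on $Q_{B_n,\infty}$ by the comparison principle (their lateral data $0$ on $\partial B_n$ is below $Y_\eta^{B_{n+1}}$, and their initial data coincide). Each is also bounded above: comparing on $Q_{B_n \setminus \overline{B_\eta},\infty}$ with the stationary supersolution $x \mapsto \Gamma(|x|-\eta)$ (which is $+\infty$ on $\partial B_\eta$, hence above $Y_\eta^{B_n}$ there, and $\geqq 0 = Y_\eta^{B_n}$ on $\partial B_n$) gives $Y_\eta^{B_n}(x,t) \leqq \Gamma(|x|-\eta)$ outside $B_\eta$; inside $B_\eta$ one compares with $w_{\lambda,\eta}$ type supersolutions from Lemma \ref{super}, or simply with the universal gradient bound of Theorem \ref{apri} plus \eqref{wa}. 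By Theorem \ref{T.2.1}(ii) the monotone limit $Y_\eta$ is a weak, hence classical $C^{2,1}$, solution of \eqref{py} on $Q_{\mathbb{R}^N,\infty}$, radial by uniqueness in Theorem \ref{L.3.3}, and still satisfies $Y_\eta(x,t) \leqq \Gamma(|x|-\eta)$. Next, letting $\eta \to 0$: the $Y_\eta$ are nonincreasing in $\eta$ (smaller $\eta$ means smaller initial data on the ball, and comparison applies on each $Q_{B_n,\infty}$ before passing to the limit), so $Y = \lim_{\eta \to 0} Y_\eta$ exists, is a $C^{2,1}$ solution by Theorem \ref{T.2.1}(ii) again, and inherits $Y(x,t) \leqq \Gamma(|x|)$, i.e. \eqref{enf}. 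The bound \eqref{vir} follows from the universal estimate \eqref{wa} (or its whole-space analogue obtained by comparing with $w_{\lambda,s}$ on balls, as in the proof of Lemma \ref{L.3.2}(ii)), passing to the limit. The decay \eqref{dir} is inherited from the second half of \eqref{mil}, uniformly in $n$ and $\eta$: for $|x| \geqq r$ and $\eta < r/2$, $Y_\eta^{B_n}(x,t) \leqq \Gamma(r/2)$ is not yet small, so instead I would use that on $\{|x| \geqq r\}$ the competitor $\Gamma(|x|-\eta)$ together with the fact that $Y$ is the decreasing limit of solutions vanishing at $t=0$ on $\overline{B_n} \setminus \overline{B_\eta}$ forces $\limsup_{t\to 0}\sup_{|x|\geqq r} Y(x,t) \leqq \limsup_{t\to 0}\sup_{|x|\geqq r} Y_\eta(x,t)$, and each $Y_\eta$ is continuous up to $t=0$ on $|x|\geqq r$ with value $0$ there.

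\medskip

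\textbf{Self-similarity and vanishing.} The scaling invariance of \eqref{un} in the subcritical-type regime: for $\lambda > 0$, if $u(x,t)$ solves \eqref{un} then $u_\lambda(x,t) := \lambda^{\beta} u(\lambda x, \lambda^2 t)$ solves it provided $\beta = (2-q)/(q-1) = a$ — so $u_\lambda = \lambda^a u(\lambda x, \lambda^2 t)$. Applying this scaling to the construction: $Y_\eta$ rescaled with parameter $\lambda$ is exactly $Y_{\eta/\lambda}$ (the initial data $\infty \cdot \mathbf{1}_{B_\eta}$ is scale-invariant up to changing the ball radius), so in the limit $\eta \to 0$ we get $\lambda^a Y(\lambda x, \lambda^2 t) = Y(x,t)$, i.e. $Y$ is self-similar of the form $Y(x,t) = t^{-a/2} f(|x| t^{-1/2})$ for a profile $f \geqq 0$. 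Now combine with \eqref{enf}: evaluating at a fixed $x \neq 0$ and letting $t \to 0$, self-similarity gives $Y(x,t) = t^{-a/2} f(|x|t^{-1/2})$, while \eqref{enf} gives $Y(x,t) \leqq \gamma_q |x|^{-a}$. Setting $s = |x| t^{-1/2} \to \infty$ as $t \to 0$, we get $f(s) \leqq \gamma_q s^{-a}$ for all large $s$; but this is only a bound, not yet $f \equiv 0$. The decisive step is to integrate: $\int_{B_r} Y(x,t)\,dx$. When $q_\ast \leqq q < 2$, i.e. $a \geqq N$, the bound $Y(x,t) \leqq \gamma_q |x|^{-a}$ has a non-integrable singularity at $0$, whereas self-similarity gives $\int_{B_r} Y(x,t)\,dx = t^{(N-a)/2} \int_{|y|\leqq r t^{-1/2}} f(|y|)\,dy$. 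For $a > N$ this $\to 0$ as $t\to 0$ for each fixed $r$ (if the integral converges) while for $a = N$ it is constant in $t$; meanwhile \eqref{dir} forces the mass to concentrate at $0$, so $Y(\cdot,t) \rightharpoonup c\,\delta_0$ weak-$\ast$ for some $c \geqq 0$. But by \eqref{enf} and $a \geqq N$, either $c = +\infty$ (impossible, since $\int_{B_r}\Gamma(|x|)\,dx$... wait, that diverges) — the right move is: $Y$ satisfies \eqref{RR} because $Y_\eta$ does uniformly (by \eqref{dir}-type bounds away from $0$ one checks the trace at $0$ vanishes on $C_c(\mathbb{R}^N\setminus\{0\})$), hence by Proposition \ref{dic} combined with the trace computation, $Y$ would be a solution with initial data $c\delta_0$; then invoking the Remark after Proposition \ref{dic} or the nonexistence result for Dirac initial data in the supercritical range, $c = 0$, and then $Y(\cdot,t)\rightharpoonup 0$; self-similarity at fixed $x$ and the uniform local $L^1$ bound then upgrade this to $Y \equiv 0$. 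Cleanest is: by self-similarity $Y(x,1) = \lim_{t\to 0} t^{a/2} Y(t^{1/2} x, 1) $... rather, $Y(x,1) = \mu^{a} Y(\mu x, \mu^2)$ for all $\mu$, let $\mu \to 0$ and use $\mu^a Y(\mu x, \mu^2) \leqq \mu^a \gamma_q (\mu|x|)^{-a} = \gamma_q |x|^{-a}$ — still just a bound. So the genuinely needed input is the $L^1$ argument: $\|Y(\cdot,t)\|_{L^1(B_r)} \leqq \int_{B_r}\Gamma(|x|)\,dx = \infty$ is useless, so instead integrate the equation — $\frac{d}{dt}\int_{B_r} Y\,\phi_1 \geqq -\lambda_1 \int Y\phi_1 - (\text{boundary})$, use \eqref{dir} to bound the boundary term and \eqref{RR} to get the trace is supported at $0$, and finally kill the point mass using that $\int_{B_r} Y(x,t)\,dx$ stays bounded as $t\to 0$ — which holds iff $a < N$, contradicting $a \geqq N$ unless the mass is $0$; hence $Y(\cdot,t)\rightharpoonup 0$ and by the self-similar structure $Y\equiv 0$.

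\medskip

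\textbf{Main obstacle.} The delicate point is the very last deduction: extracting $Y \equiv 0$ from the combination of self-similarity, the pointwise bound \eqref{enf}, the decay \eqref{dir}, and the trace condition \eqref{RR} at $0$, in the borderline case $a \geqq N$. One must argue carefully that the weak-$\ast$ limit of $Y(\cdot,t)$ as $t\to 0$ exists (via Proposition \ref{dic}, using the local $L^1$-in-time bound \eqref{vir} or the universal estimate), that it is supported at $\{0\}$ (from \eqref{RR}), that the point mass is zero (because a nonzero Dirac would force $\int_{B_r}Y(\cdot,t)\,dx$ to stay $\geqq c > 0$, but self-similarity makes this quantity equal $t^{(N-a)/2}\int_{|y|\leqq rt^{-1/2}} f\,dy$; for $a = N$ one needs in addition that $f$ is integrable at infinity, which follows from $f(s)\leqq\gamma_q s^{-N}$... which is borderline non-integrable — so here the cleanest route is instead to note that by \eqref{enf} the profile satisfies $f(s) = o(s^{-a})$ is \emph{not} automatic, and one must use the strict inequality coming from $\Gamma$ being a strict supersolution, or appeal directly to the uniqueness-of-very-singular-solution machinery), and finally that vanishing trace plus self-similarity forces $Y \equiv 0$ by a scaling limit. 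I expect the honest write-up to lean on Proposition \ref{dic} and the nonexistence of solutions with Dirac initial data (the Remark following Proposition \ref{dic}), together with the self-similar reduction, to close the case $q_\ast \leqq q < 2$ cleanly.
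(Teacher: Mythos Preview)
Your construction of the limits $Y_\eta^{B_n}\to Y_\eta\to Y$ is essentially the paper's, and the bounds \eqref{enf}, \eqref{vir}, \eqref{dir} and self-similarity are obtained the same way. The genuine gap is in the final step, showing $Y\equiv 0$ when $q_\ast\leqq q<2$.

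First, a sign error: $a=(2-q)/(q-1)$ is \emph{decreasing} in $q$ with $a(q_\ast)=N$, so $q_\ast\leqq q<2$ means $0<a\leqq N$, not $a\geqq N$ as you write. This reverses the integrability of $\Gamma(|x|)=\gamma_q|x|^{-a}$ near $0$ and invalidates your mass argument. Second, the Remark after Proposition~\ref{dic} that you plan to invoke (nonexistence of Dirac initial data) is stated and proved only for $q\geqq 2$; it does not cover the range $q_\ast\leqq q<2$ at issue here. Your various attempts via trace, weak-$\ast$ limits, and $L^1$ mass all stall for exactly the reason you notice yourself: the bound $f(s)\leqq\gamma_q s^{-a}$ coming from \eqref{enf} is borderline and does not by itself force $f\equiv 0$.

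The paper's argument is much shorter and relies on an external ODE result. Writing $Y(x,t)=t^{-a/2}f(t^{-1/2}|x|)$, one observes that \eqref{dir} is equivalent to $\lim_{s\to\infty}s^{a}f(s)=0$. Now \cite[Theorem~2.1]{QW} asserts that any \emph{nonzero} nonnegative self-similar profile $f$ of \eqref{un} satisfies $\underline{\lim}_{s\to\infty}\,s^{a}f(s)>0$. These two statements are incompatible unless $f\equiv 0$, hence $Y\equiv 0$. You are missing precisely this input: the structure of the profile ODE (analyzed in \cite{QW}) forbids the fast decay $f(s)=o(s^{-a})$ for nontrivial solutions, and that is what closes the argument.
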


\begin{proof}
Let $\eta\in\left(  0,1/2\right)  $. For any $n\geqq1,$ $Y_{\eta}^{B_{n}}$ is
the supremum of the solutions $y_{\varphi_{\eta,B_{n}}}$; from the comparison
principle, since $q<2,$
\begin{equation}
y_{\varphi_{\eta,B_{n}}}(x,t)\leqq\Gamma\left(  \left\vert x\right\vert
-\eta\right)  \qquad\text{in }(B_{n}\backslash\overline{B_{\eta}}%
)\times\left[  0,\infty\right)  . \label{rose}%
\end{equation}
From Lemma \ref{mono} in $Q_{B_{1},\infty},$ we obtain, for any $(x,t)\in
\overline{B_{1}}\times(0,\infty)$
\begin{equation}
y_{\varphi_{\eta,B_{n}}}(x,t)\leqq C(1+t^{-\frac{1}{q-1}})+\gamma_{q}%
\{1-\eta)\}^{-\frac{2-q}{q-1}}\leqq C(1+t^{-\frac{1}{q-1}})+\gamma_{q}%
2^{\frac{2-q}{q-1}}, \label{koi}%
\end{equation}
with $C=C(N,q).$ And for any $(x,t)\in(B_{n}\backslash\overline{B_{1}}%
)\times(0,\infty)$, we have
\begin{equation}
y_{\varphi_{\eta,B_{n}}}(x,t)\leqq\Gamma\left(  \left\vert x\right\vert
-\eta\right)  \leqq\Gamma\left(  1-\eta\right)  \leqq\gamma_{q}2^{\frac
{2-q}{q-1}} \label{flou}%
\end{equation}
Then (\ref{koi}) holds in $B_{n}\times\left[  0,\infty\right)  .$ The same
majoration holds for $Y_{\eta}^{B_{n}}:$ with a new $C=C(N,q),$
\[
Y_{\eta}^{B_{n}}(.,t)\leqq C(1+t^{-\frac{1}{q-1}}),\qquad\text{in }%
Q_{B_{n},\infty}.
\]
Then we can go to the limit as $n\rightarrow\infty,$ for fixed $\eta$. From
Theorem \ref{T.2.1} we can extract a (diagonal) subsequence converging in
$C_{loc}^{2,1}(Q_{\mathbb{R}^{N},\infty})$ to a weak solution $Y_{\eta}$ of
equation (\ref{un})$.$ In fact the whole sequence converges, since $Y_{\eta
}^{B_{n}}\leqq Y_{\eta}^{B_{n+1}}$ in $Q_{B_{n},\infty}.$ Then $Y_{\eta}=\sup
Y_{\eta}^{B_{n}}$ satisfies
\begin{equation}
Y_{\eta}\leqq C(1+t^{-\frac{1}{q-1}}),\qquad\text{ in }Q_{\infty}, \label{vri}%
\end{equation}
and $Y_{\eta}$ solves the problem (\ref{py}) in the sense
\begin{equation}
\lim_{t\rightarrow0}\inf_{x\in K}Y_{\eta}(x,t)=\infty,\quad\forall K\text{
compact}\subset B_{\eta};\qquad\lim_{t\rightarrow0}\sup_{x\in K}Y_{\eta
}(x,t)=0,\quad\forall K\text{ compact}\subset\mathbb{R}^{N}\backslash
\overline{B_{\eta}}. \label{fol}%
\end{equation}
Indeed from Lemma \ref{super}, for any ball $B(x_{0},s)\subset\mathbb{R}%
^{N}\backslash\overline{B_{\eta}},$ and any $\lambda>0,$ we have $Y_{\eta
}^{B_{n}}\leqq w_{\lambda,s}(x-x_{0})$ in $Q_{B(x_{0},s),\infty}$ for any
$n>\left\vert x_{0}\right\vert +\left\vert r\right\vert ;$ in turn $Y_{\eta
}\leqq w_{\lambda,s}(x-x_{0}),$ hence lim$_{t\rightarrow0}\sup_{B(x_{0}%
,s/2)}Y_{\eta}(.,t)\leqq\lambda e^{1/\alpha(s/2)}$ for any $\lambda>0.$
Moreover (\ref{rose}) implies that
\begin{equation}
Y_{\eta}(x,t)\leqq\Gamma\left(  \left\vert x\right\vert -\eta\right)
\qquad\text{in }Q_{\mathbb{R}^{N}\backslash\overline{B_{\eta}},\infty}.
\label{enfi}%
\end{equation}
Then for any $r>\eta,$ and any $p>r,$
\[
\sup_{\left\vert x\right\vert \geqq r}Y_{\eta}(x,t)\leqq\sup_{x\in
B_{p\backslash}\overline{B_{\eta}}}Y_{\eta}(x,t)+\sup_{x\in\mathbb{R}%
^{N}\overline{\backslash B_{p}}}Y_{\eta}(x,t)\leqq\sup_{x\in B_{p\backslash
}\overline{B_{\eta}}}Y_{\eta}(x,t)+\Gamma\left(  \left\vert p\right\vert
-\eta\right)
\]
then we find
\begin{equation}
\lim_{t\rightarrow0}(\sup_{\left\vert x\right\vert \geqq r}Y_{\eta}(x,t))=0,
\label{fil}%
\end{equation}
since $\lim_{r\rightarrow\infty}\Gamma(r)=0.\medskip$

Next we let $\eta\rightarrow0:$ observing that $Y_{\eta}\leqq Y_{\eta^{\prime
}}$ for $\eta\leqq\eta^{\prime},$ in the same way from Theorem \ref{T.2.1},
the function $Y=\inf_{\eta>0}Y_{\eta}$ is a weak solution of equation
(\ref{un}) in $Q_{\mathbb{R}^{N},\infty},$ satisfying the estimates
(\ref{enf}), (\ref{vir}), and (\ref{dir}) which implies in particular
(\ref{VS}). Because of their uniqueness, all the functions $Y_{\eta}^{B_{n}}$
are radial, and satisfy the relation of similarity,%
\[
\kappa^{a}Y_{\eta}^{B_{n}}(\kappa x,\kappa^{2}t)=Y_{\eta/\kappa}^{B_{n/\kappa
}}(x,t),\qquad\forall\kappa>0,\quad\forall(x,t)\in B_{n/k};
\]
then $Y$ is radial and self-similar. \medskip

Suppose $q\geqq q_{\ast}$ and $Y\not \equiv 0;$ writing $Y$ under the similar
form $Y(x,t)=t^{-a/2}f(t^{-1/2}\left\vert x\right\vert ),$ then from
\cite[Theorem 2.1]{QW}, we find \underline{lim}$_{r\rightarrow\infty}%
r^{a}f(r)>0,$ which contradicts (\ref{dir}); thus $Y\equiv0.$\medskip
\end{proof}

\begin{proposition}
\label{muji}Let $1<q<2$. Let $\Omega$ be any domain in $\mathbb{R}^{N}.$ Let
$u$ be any weak solution of (\ref{un}),(\ref{R}) in $Q_{\Omega,T}.$ Then for
any $\tau\in\left(  0,T\right)  $ and any ball $B_{r}\subset\subset\Omega,$
there holds%
\[
u\leqq Y+\max_{\partial B_{r}\times\lbrack0,\tau]}u,\qquad\text{in }%
Q_{B_{r},\tau}.
\]
Moreover, if $\Omega=\mathbb{R}^{N},$ then
\begin{equation}
u\leqq Y,\qquad\text{in }Q_{\mathbb{R}^{N},T} \label{poul}%
\end{equation}
and $u\in C^{2,1}(Q_{\mathbb{R}^{N},\infty})\cap C((0,\infty);C_{b}%
^{2}(\mathbb{R}^{N})).$
\end{proposition}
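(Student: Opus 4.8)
\textbf{Proof plan for Proposition \ref{muji}.}
The plan is to obtain the pointwise bound by comparison with the large solutions $Y_\eta^{B_n}$ constructed in Theorem \ref{L.3.3}, then pass to the limit. Fix a ball $B_r\subset\subset\Omega$ and $\tau\in(0,T)$. By Corollary \ref{sous} (via Proposition \ref{three} and Theorem \ref{T.2.1}) we know $u\in C^{2,1}(\Omega_0\times[0,T))\cap C(\Omega_0\times[0,T))$ with $u(x,0)=0$ for $x\neq0$, and $u$ is bounded on $\partial B_r\times[0,\tau]$; set $M_r=\max_{\partial B_r\times[0,\tau]}u$. First I would fix $\eta\in(0,r)$ and $n\geq1$ with $B_r\subset B_n$, and show that $Y_\eta^{B_n}+M_r$ is a supersolution of (\ref{un}) on $Q_{B_r\setminus\overline{B_\eta},\tau}$ dominating $u$ on the parabolic boundary: on $\partial B_\eta\times[0,\tau]$ this holds because $Y_\eta^{B_n}(x,t)\to\infty$ uniformly as $t\to0$ on compacts of $B_\eta$ (initial condition (\ref{mil})) and $Y_\eta^{B_n}$ is positive and continuous for $t>0$, hence bounded below on $\partial B_\eta\times[\epsilon,\tau]$ while $u$ is bounded — more precisely one compares on $\partial B_{\eta+\delta}\times[0,\tau]$ where $Y_\eta^{B_n}\geq u$ for small times by (\ref{mil}) and for the remaining times by its strict positivity, choosing parameters carefully; on $\partial B_r\times[0,\tau]$ it holds by the choice of $M_r$ since $Y_\eta^{B_n}\geq0$; and on $B_r\setminus\overline{B_\eta}$ at $t=0$ it holds since $u(\cdot,0)=0$. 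The comparison principle on the annular domain then gives $u\leq Y_\eta^{B_n}+M_r$ on $Q_{B_r\setminus\overline{B_\eta},\tau}$.

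Next I would let $n\to\infty$ and then $\eta\to0$: by Proposition \ref{L.3.4}, $Y_\eta^{B_n}\uparrow Y_\eta$ and $Y_\eta\downarrow Y$, so $u\leq Y+M_r$ on $Q_{B_r\setminus\{0\},\tau}$, and since $\{0\}$ is negligible and both sides are defined a.e., the inequality $u\leq Y+\max_{\partial B_r\times[0,\tau]}u$ holds in $Q_{B_r,\tau}$. For the case $\Omega=\mathbb{R}^N$ the argument is cleaner: I would compare $u$ directly with $Y_\eta$ on $Q_{\mathbb{R}^N\setminus\overline{B_\eta},T}$. Here $u$ is globally defined; on $\partial B_\eta\times(0,T)$ we have $Y_\eta=+\infty$ in the limiting sense of (\ref{fol}) (so comparison on shrinking annuli $\partial B_{\eta+\delta}$ works exactly as above), at $t=0$ in $\mathbb{R}^N\setminus\overline{B_\eta}$ both initial values vanish, and there is no outer boundary, so a comparison principle on the exterior domain $\mathbb{R}^N\setminus\overline{B_\eta}$ — of the type used in Lemma \ref{L.3.2}(ii), citing \cite{GiGuKe} — yields $u\leq Y_\eta$ there. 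Letting $\eta\to0$ gives $u\leq Y$ in $Q_{\mathbb{R}^N,T}$, which is (\ref{poul}).

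For the final regularity claim when $\Omega=\mathbb{R}^N$: from (\ref{poul}) and (\ref{vir}), $u$ is bounded on $\mathbb{R}^N\times[t_0,\infty)$ for every $t_0>0$, uniformly locally; since $q\leq2$, Theorem \ref{T.2.1} upgrades this to $u\in C^{2,1}(Q_{\mathbb{R}^N,\infty})$ with local $C^{2+\gamma,1+\gamma/2}$ bounds depending only on the (now global) $L^\infty$ bound, which together with the heat-semigroup representation of Lemma \ref{subcal}/Lemma \ref{ent} gives $u(\cdot,t)\in C_b^2(\mathbb{R}^N)$ and continuity in $t$, i.e. $u\in C((0,\infty);C_b^2(\mathbb{R}^N))$.

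The main obstacle I anticipate is the comparison step near the inner sphere $\partial B_\eta$, where $Y_\eta$ (or $Y_\eta^{B_n}$) blows up only in the limit $t\to0$ and is merely continuous and positive for $t>0$. One cannot directly apply the comparison principle on $B_r\setminus\overline{B_\eta}$ because the boundary data $Y_\eta^{B_n}$ is not known to dominate $u$ at $\partial B_\eta$ for all $t>0$ a priori; the fix is to work on the slightly larger annulus $B_r\setminus\overline{B_{\eta+\delta}}$, use (\ref{mil})/(\ref{fol}) to get domination on $\partial B_{\eta+\delta}$ for $t\in(0,t_\delta)$ and the strict positivity of $Y_\eta^{B_n}$ plus local boundedness of $u$ on the complementary time interval, then let $\delta\to0$. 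Making this two-scale limiting argument precise, and verifying that the comparison principle of \cite{GiGuKe} applies in the exterior-domain case with the growth we have, is the technical heart of the proof.
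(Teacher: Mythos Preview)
Your overall strategy—compare $u$ with the large solutions $Y_\eta^{B_n}$ and pass to the limit—is the paper's strategy. But the comparison on the \emph{annulus} $B_r\setminus\overline{B_\eta}$ has a genuine gap at the inner boundary, and the fix you sketch does not close it. On $\partial B_{\eta+\delta}$ (which lies \emph{outside} $\overline{B_\eta}$), the second half of (\ref{mil}) says $Y_\eta^{B_n}(\cdot,t)\to 0$ as $t\to0$, not $\infty$; so (\ref{mil}) gives you nothing there. For the remaining times $[t_\delta,\tau]$, ``strict positivity of $Y_\eta^{B_n}$ plus local boundedness of $u$'' is not enough: you need $Y_\eta^{B_n}+M_r\geq u$ pointwise on $\partial B_{\eta+\delta}$, and $u$ there is only bounded by some $C_{\eta,\delta}$ that may blow up as $\eta\to0$, with no relation to $M_r$. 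The equation being nonlinear, you cannot rescale $Y_\eta^{B_n}$ to absorb this. The same objection applies to your exterior-domain comparison with $Y_\eta$ when $\Omega=\mathbb{R}^N$; note that in Lemma \ref{L.3.2}(ii) the barrier $F_\eta(x)=\Gamma(|x|-\eta)$ is $+\infty$ on $\partial B_\eta$ for \emph{all} $t$, which is precisely what made that argument work and is missing here.

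The paper's device is to shift in \emph{time} and compare on the \emph{full ball}, thereby eliminating the inner boundary altogether. Fix $\varepsilon>0$ and $0<\eta<r/2$; since $u\in C(\Omega_0\times[0,T))$ with $u(\cdot,0)=0$, there is $\delta_\eta>0$ with $u<\varepsilon$ on $(\overline{B_r}\setminus\overline{B_\eta})\times(0,\delta_\eta)$. For $\delta\in(0,\delta_\eta)$ and $R>r$ one compares $u(x,t)$ with $Y_{2\eta}^{B_R}(x,t-\delta)+M_r+\varepsilon$ on $Q_{B_r,\delta,\tau}$. At the shifted initial time $t=\delta$ the supersolution is $+\infty$ on $\overline{B_\eta}\subset B_{2\eta}$ by the first half of (\ref{mil}), while on $\overline{B_r}\setminus\overline{B_\eta}$ one has $u(\cdot,\delta)<\varepsilon$; on $\partial B_r\times[\delta,\tau]$ one has $u\leq M_r$. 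The classical comparison principle on $B_r$ then yields $u\leq Y_{2\eta}^{B_R}(\cdot,\cdot-\delta)+M_r+\varepsilon$, and one lets successively $\delta\to0$, $R\to\infty$, $\eta\to0$, $\varepsilon\to0$. For $\Omega=\mathbb{R}^N$ the paper does not argue on an exterior domain: it uses the local inequality $u\leq Y+M_r$ on $B_r$ together with $M_r\leq\Gamma(r)\to0$ from Lemma \ref{L.3.2}(ii) and lets $r\to\infty$. Finally, the membership $u\in C((0,\infty);C_b^2(\mathbb{R}^N))$ is obtained from the uniform bound (\ref{vir}) via \cite[Theorems 3 and 6]{GiGuKe}; Lemma \ref{ent} concerns the Dirichlet semigroup on bounded domains and is not the right reference here.
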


\begin{proof}
Let $u$ be such a solution in $Q_{\Omega,T}.$ Let $\tau\in\left(  0,T\right)
,$ $B_{r}\subset\subset\Omega$\noindent, and $M_{r}=\max_{\partial B_{r}%
\times\lbrack0,\tau]}u$ and $\varepsilon>0$ be fixed. From Corollary
\ref{sous}, $u\in C(\Omega_{0}\times\left[  0,T\right)  )$ and
$u(x,0)=0,\forall x\in\Omega_{0}.$ Then for any $0<\eta<r/2$, there is
$\delta_{\eta}>0$ such that
\begin{equation}
u(x,t)<\varepsilon,\qquad\text{for }\hspace{0.05in}\eta\leqq\left\vert
x\right\vert \leqq r,\quad t\in(0,\delta_{\eta}). \label{1.26}%
\end{equation}
Let $R>r.$ Next, for any $\delta\in(0,\delta_{\eta})$, we make a comparison in
$Q_{B_{r},\delta,\tau}$ between $u(x,t)$ and
\[
y_{2\eta,R,\delta}(x,t)=Y_{2\eta}^{B_{R}}(x,t-\delta)+M_{r}+\varepsilon
\]
as follows. On the parabolic boundary of $Q_{B_{r},\delta,\tau},$ it is clear
that $u\leqq y_{2\eta,\delta,R},$ since $u\leqq M_{r}$ on $\partial
B_{r}\times\left[  \delta,\tau\right]  ,$ $u(x,\delta)\leqq\varepsilon$ for
$x\in\overline{B_{r}}\backslash\overline{B_{\eta}},$ and $u(x,\delta
)\leqq\infty=y_{2\eta,\delta,R},$ for $x\in\overline{B_{\eta}}$. And
$y_{2\eta,R,\delta}$ converges to $+\infty$ uniformly on $\overline{B_{\eta}}$
as $t\rightarrow\delta,$ and $u(.,\delta)$ is bounded on $\overline{B_{\eta}}%
$. Then, from the comparison principle,
\begin{equation}
u\leqq y_{2\eta,R,\delta},\qquad\text{in }Q_{B_{r},\delta,\tau}. \label{1.27}%
\end{equation}
As $\delta$ tends to $0$ in (\ref{1.27}), and we get
\begin{equation}
u\leqq Y_{2\eta}^{B_{R}}+M_{r}+\varepsilon,\qquad\text{in }Q_{B_{r},\tau},
\label{1.28}%
\end{equation}
by the continuity of $Y_{2\eta}^{B_{R}}$ in $Q_{B_{r},T}$. Since (\ref{1.28})
holds for any $\eta<r/2$, and any $\varepsilon>0,$ we finally obtain
\[
u\leqq Y+M_{r},\qquad\text{in }Q_{B_{r},\tau}.
\]
Moreover if $\Omega=\mathbb{R}^{N},$ then $M_{r}\leqq\Gamma(r)$ from Lemma
\ref{L.3.2}, and we get (\ref{poul}) by letting $r\rightarrow\infty.$ Moreover
$u\in C^{2,1}(Q_{\mathbb{R}^{N},\infty})$ from Theorem \ref{T.2.1}, then from
(\ref{vir}), $u\in C_{b}(Q_{\mathbb{R}^{N},\epsilon,\infty})$ for any
$\epsilon>0,$ then from \cite[Theorems 3 and 6]{GiGuKe}, $u\in C((0,\infty
);C_{b}^{2}(\mathbb{R}^{N})).\medskip$
\end{proof}

As a direct consequence, we deduce a new proof of Theorem \ref{stop}, valid in
the range $q_{\ast}\leqq q<2:$

\begin{theorem}
\label{T.3.4}Let $q_{\ast}\leqq q<2$. Suppose that $u$ is a nonnegative weak
solution of (\ref{un}),(\ref{R}) in $Q_{\Omega,T}.$ \medskip

Then $u\in C(\Omega\times\left[  0,T\right)  )$ and $u(x,0)=0,\;\forall
x\in\Omega.$
\end{theorem}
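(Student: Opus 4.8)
The plan is to deduce Theorem \ref{T.3.4} directly from Proposition \ref{muji} together with the local regularity results of Section \ref{two}. The key observation is that Proposition \ref{muji} already tells us $u \leqq Y + M_r$ in $Q_{B_r,\tau}$ for any ball $B_r \subset\subset \Omega$, where $M_r = \max_{\partial B_r \times [0,\tau]} u$ is a finite constant (since $u \in C(\Omega_0 \times [0,T))$ by Corollary \ref{sous}, the solution is continuous on the compact set $\partial B_r \times [0,\tau]$). Combined with the fact that $Y \equiv 0$ in the range $q_\ast \leqq q < 2$, also established in Proposition \ref{L.3.4} (and restated in Proposition \ref{muji} via its proof), this immediately gives $u \leqq M_r$ near the origin.

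\medskip

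First I would fix a ball $B_r \subset\subset \Omega$ and $\tau \in (0,T)$. By Corollary \ref{sous}, $u$ is continuous on $\Omega_0 \times [0,T)$ with $u(x,0)=0$ for $x \in \Omega_0$; in particular $u$ is bounded on $\partial B_r \times [0,\tau]$, so $M_r < \infty$ and, moreover, $M_{r'} \to 0$ as we let the relevant time interval shrink — more precisely, for the estimate we only need $M_r$ finite. Applying Proposition \ref{muji} with this ball gives $u \leqq Y + M_r$ in $Q_{B_r,\tau}$, and since $q_\ast \leqq q < 2$ forces $Y \equiv 0$, we obtain $0 \leqq u \leqq M_r$ in $Q_{B_r,\tau}$. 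Thus $u$ is locally bounded near $0$.

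\medskip

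Next I would upgrade boundedness to continuity and the vanishing of the initial trace. Since $u$ is now a weak nonnegative solution of (\ref{un}) that is locally bounded in a neighbourhood of $(0,0)$ — indeed locally bounded on all of $Q_{\Omega,T}$, combining the above with the interior regularity already known in $Q_{\Omega_0,T}$ — Theorem \ref{T.2.1}(i) applies on $Q_{\Omega,T}$ (note $q \leqq 2$), so $u \in C^{2,1}(Q_{\Omega,T})$ with the local Hölder estimates. It remains to show $u$ extends continuously to $\Omega \times [0,T)$ with $u(x,0)=0$. For this I would argue that, letting $\tau \to 0$, the constant $M_r$ (taken over $\partial B_r \times [0,\tau]$) tends to $0$, because $u(x,0)=0$ on the compact set $\partial B_r$ and $u$ is continuous there; hence $\limsup_{t\to 0}\sup_{x \in B_r} u(x,t) \leqq \lim_{\tau\to 0} M_r = 0$, which gives both the continuity of $\overline u$ at $t=0$ in the ball and $u(x,0)=0$ for $x \in B_r$. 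Outside $B_r$ this is already known from Corollary \ref{sous}. Finally, invoking Proposition \ref{three}, the extension $\overline u$ by zero for $t<0$ is a weak solution on $Q_{\Omega,-T,T}$, and Theorem \ref{T.2.1} applied to $\overline u$ gives $u \in C^{2,1}(\Omega \times [0,T))$ — in particular $u \in C(\Omega \times [0,T))$ — completing the proof.

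\medskip

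The main obstacle is bookkeeping rather than a genuine difficulty: one must be careful that the constant $M_r$ in Proposition \ref{muji} is taken over the \emph{closed} time interval $[0,\tau]$, and that the continuity of $u$ up to $t=0$ \emph{away from the origin} (Corollary \ref{sous}) is what makes $M_r \to 0$ as $\tau \to 0$. Given that, everything reduces to citing $Y \equiv 0$ from Proposition \ref{L.3.4} and the regularity bootstrap of Theorem \ref{T.2.1}, with Proposition \ref{three} used at the end to pass from continuity of $u$ on $\Omega \times [0,T)$ to the full $C^{2,1}$ statement.
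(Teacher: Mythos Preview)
Your proof is correct and follows the same line as the paper's: use Proposition \ref{muji} together with $Y\equiv 0$ (Proposition \ref{L.3.4}) to bound $u$ near the origin, then invoke Corollary \ref{sous}. The only minor difference is that where the paper appeals to the trace result (Proposition \ref{dic}) to conclude that the weak$^\ast$ limit of $u(.,t)$ is the zero measure (so that (\ref{bou}) holds), you argue more directly by letting $\tau\to 0$ in the bound $u\leqq M_r(\tau)=\max_{\partial B_r\times[0,\tau]}u$ to obtain uniform convergence of $u(.,t)$ to $0$ on $B_r$; this equally yields (\ref{bra}) and hence the hypothesis of Corollary \ref{sous}.
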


\begin{proof}
Since $q\geqq q_{\ast},$ we have $Y=0,$ from Proposition \ref{L.3.4}, thus $u$
is bounded in $Q_{B_{r},\tau}$ from Proposition \ref{muji}. Then (\ref{bou})
still holds for any $\psi\in C_{c}^{\infty}\left(  B_{r}\right)  ,$ and we
conclude again from Corollary \ref{sous}.
\end{proof}

\subsection{Removability for $q\geqq2$}

When $q>2,$ the regularity of the solutions of equation (\ref{un}), in
particular the continuity property, is not known up to now. It was shown
recently in \cite{CanCar} that \textit{if }a solution in the viscosity sense
is continuous, then it is H\"{o}lderian. Then it is difficult to apply
comparison theorems. Here we use the transformation $u\longmapsto z=1-e^{-u}%
$\noindent, which reduces classically equation (\ref{un}) to the heat equation
when $q=2$, where we gain the fact that $z$ is bounded. For $p>2,$ our proof
requires regularization arguments.

\begin{theorem}
\label{the3.10}Let $q\geqq2.$ Let $u$ be any weak solution $u$ of equation
(\ref{un}), (\ref{R}), in $Q_{\Omega,T}$.\medskip

\noindent(i) If $q=2,$ then $u\in C^{\infty}(\Omega\times\left[  0,T\right)
),$ and $u(x,0)=0,\quad\forall x\in\Omega.$\medskip

\noindent(ii) If $q>2,$ then $u$ satisfies
\[
\lim_{t\rightarrow0}\int_{\Omega}u(.,t)\varphi dx=0,\qquad\forall\varphi\in
C_{c}(\Omega),
\]
and $u\in C([0,T);L_{loc}^{r}(\Omega))$ for any $r\geqq1$ and $u(.,0)=0$ in
the sense of $L_{loc}^{r}(\Omega)$. Moreover $u\in L^{\infty}(Q_{\omega,\tau
})$ for any $\omega\subset\subset\Omega,$ and $\tau\in\left(  0,T\right)  ,$
and%
\[
\lim_{t\rightarrow0}\sup_{Q_{\omega,t}}u=0.
\]

\end{theorem}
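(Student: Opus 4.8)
The plan is to reduce equation (\ref{un}) to a caloric problem via the substitution $z=1-e^{-u}$, exploiting that $z$ is bounded (since $u\geqq0$ forces $0\leqq z<1$) to obtain enough compactness to pass to the limit. A formal computation gives
\[
z_{t}-\Delta z=e^{-u}\left(|\nabla u|^{q}-|\nabla u|^{2}\right),
\]
so when $q=2$ the right-hand side vanishes and $z$ solves the heat equation, while when $q>2$ the right-hand side has \emph{no sign} in general — this is the first place care is needed. The strategy for both cases is: first establish that $\overline{z}=1-e^{-\overline{u}}$ (with $\overline{u}$ the extension by $0$ from Proposition~\ref{three}) is a weak solution, then subsolution, of a heat-type inequality across $t=0$ in $Q_{\Omega_{0},-T,T}$; then use the trace machinery of Lemma~\ref{L.4.3} and Proposition~\ref{dic} to show the singularity at $0$ carries no mass; and finally conclude regularity/vanishing of $z$, hence of $u$.

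\textbf{Case $q=2$.} First I would regularize $u$ by $u_{\varepsilon}$ on a cylinder $Q'=Q_{\omega',s/2,\tau}$; by Lemma~\ref{subso}, $u_{\varepsilon}$ is a subsolution, and setting $z^{\varepsilon}=1-e^{-u_{\varepsilon}}$ one gets, exactly as in the proof of Theorem~\ref{T.2.1}, that $(z^{\varepsilon})_{t}-\Delta z^{\varepsilon}+h_{\varepsilon}=0$ with $h_{\varepsilon}=e^{-u_{\varepsilon}}(|\nabla u|^{2}*\varrho_{\varepsilon}-|\nabla u_{\varepsilon}|^{2})\geqq0$ by (\ref{kine}), and $h_{\varepsilon}\to 0$ in $L^{1}_{loc}$. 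Passing to the limit, $z$ solves the heat equation in $\mathcal{D}'(Q_{\Omega,T})$, hence is $C^{\infty}$ there with $z<1$, so $u=-\ln(1-z)\in C^{\infty}(Q_{\Omega,T})$. To handle $t=0$: from (\ref{R}) and $u\geqq0$ we have $u\in L^{\infty}_{loc}([0,T);L^{1}_{loc}(\Omega_{0}))$, so by Proposition~\ref{dic}, $|\nabla u|\in L^{q}_{loc}(\Omega_{0}\times[0,T))$; hence $z=1-e^{-u}\in C((0,T);L^{1}_{loc}(\Omega_{0}))$ and (since $0\leqq z\leqq u$) lies in $L^{\infty}_{loc}([0,T);L^{1}_{loc}(\Omega_{0}))$, with $\overline{z}$ caloric across $t=0$ on $Q_{\Omega_{0},-T,T}$ by the argument of Proposition~\ref{three} applied to $z$ (the gradient term is now absent). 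By Lemma~\ref{L.4.3}(i) applied to $\overline{z}$ on $Q_{\Omega,T}$ (with $\Phi\equiv 0$), $z(.,t)$ has a weak$^{\ast}$ Radon-measure trace $z_{0}$ on $\Omega$, and from (\ref{R}), $z_{0}$ is supported at $0$, so $z_{0}=c\delta_{0}$; but $z$ is bounded by $1$, so an integrable caloric function cannot have a Dirac trace (as in the Remark following Proposition~\ref{dic}), forcing $c=0$. Therefore $z\in C([0,T);L^{1}_{loc}(\Omega))$ with $z(.,0)=0$, and interior parabolic regularity for the heat equation, together with the local $L^{1}$-bound, gives $z\in C^{\infty}(\Omega\times[0,T))$ with $z(x,0)=0$, hence $u\in C^{\infty}(\Omega\times[0,T))$ with $u(x,0)=0$.

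\textbf{Case $q>2$.} Here I cannot claim $z$ is caloric; instead I aim only at the trace and the local sup bound. Working with $\overline{u}$, which by Proposition~\ref{three} is a weak solution in $Q_{\Omega_{0},-T,T}$, note $u$ (being subcaloric and nonnegative) is in $L^{\infty}_{loc}(Q_{\Omega,T})$ by Lemma~\ref{subcal}; the issue is only near $(0,0)$. Write $\overline{u}_{t}-\Delta\overline{u}=-|\nabla\overline{u}|^{q}=:\Phi\leqq 0$ on $Q_{\Omega_{0},-T,T}$, with $\overline{u}\in C((-T,T);L^{1}_{loc}(\Omega_{0}))$. I would apply Lemma~\ref{L.4.3}(i) with $F\equiv 0$ (so $\Phi\geqq-F$ trivially since $\Phi\leqq 0$ requires instead $F=|\nabla u|^{q}$ — more precisely $\Phi$ has constant sign, so part (ii) applies): since $u\in L^{\infty}_{loc}([0,T);L^{1}_{loc}(\Omega_{0}))$ holds by (\ref{R}), the equivalence (\ref{eki}) gives $|\nabla u|^{q}\in L^{1}_{loc}([0,T);L^{1}_{loc}(\Omega_{0}))$, and then from Lemma~\ref{L.4.3}(i) (now with $F\equiv|\nabla u|^{q}\in L^{1}_{loc}(\Omega_{0}\times[0,T))$), $u(.,t)\to u_{0}$ weak$^{\ast}$ on $\Omega_{0}$ with (\ref{tru}) holding. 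By (\ref{R}), $u_{0}=0$ on $\Omega_{0}$, so in fact the full trace on $\Omega$ (adding the origin) is a possible Dirac $k\delta_{0}$. To kill it: since $z=1-e^{-u}\leqq 1$ is bounded, and — crucially — $z$ is a weak \emph{subsolution} of the heat equation (from $z_{t}-\Delta z=e^{-u}|\nabla u|^{q}\cdot\frac{q-2}{?}$... actually one must instead observe $z_{t}-\Delta z = e^{-u}|\nabla u|^2 - e^{-u}|\nabla u|^{q}$ which has no sign for $q>2$; here the clean substitution is less effective). The better route, and the one I expect the authors use, is to note that $v:=-e^{-u}$ satisfies $v_{t}-\Delta v = e^{-u}(|\nabla u|^{2}-|\nabla u|^{q})$ which for $|\nabla u|\geqq 1$ is $\leqq 0$; combined with the bound $-1\leqq v<0$ this makes $v$ a bounded supersolution near where gradients are large. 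Rather than fight the sign, the cleanest is: apply the $q=2$ argument's conclusion that $z$ has no Dirac trace by testing (\ref{tru}) for $\overline{z}$ against $\varphi$ with $\varphi(0,0)=1$ and supp$\varphi$ shrinking, using $0\leqq z\leqq 1$ to bound $\int z(.,t)\varphi\,dx\to 0$ as the support shrinks — but this requires $z$ integrable up to $t=0$, which follows once we know $u\in L^{\infty}_{loc}([0,T);L^{1}_{loc}(\Omega))$, i.e. including the origin. \textbf{The main obstacle}, therefore, is precisely closing this loop for $q>2$: showing the putative $k\delta_{0}$ trace is $0$ without assuming (\ref{res}) or (\ref{ress}). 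I would do this by a blow-down/scaling argument: if $k>0$, the self-similar rescalings $u_{\lambda}(x,t)=\lambda^{2/(q-1)}u(\lambda x,\lambda^{2}t)$ (note $u_{t}-\Delta u$ scales like $\lambda^{-2}$ while $|\nabla u|^{q}$ scales like $\lambda^{-q/(q-1)}$, and for $q>q_{\ast}$, i.e. $q/(q-1)<2$... wait, $q>2>q_{\ast}$ gives $q/(q-1)\in(1,2)$, so the gradient term scales \emph{slower}, hence is subcritical and the mass $k$ under scaling by $\lambda\to 0$ behaves as $\lambda^{N-2/(q-1)}\cdot$const — but $k\delta_0$ has mass scaling $\lambda^{N}\cdot\lambda^{-2/(q-1)}$... ) — concretely, a Dirac initial mass would force $u$ to dominate the fundamental solution of the heat equation $k\,G(x,t)$ by comparison (since $|\nabla u|^{q}\geqq 0$ makes $u$ a supersolution of the heat equation), and $kG$ has $\|\nabla(kG)(.,t)\|$ of order $t^{-(N+1)/2}$, making $|\nabla u|^{q}$ non-integrable near $(0,0)$ when $q(N+1)/2\geqq N/2+1$, i.e. exactly when $q\geqq q_{\ast}$ — contradicting $|\nabla u|^{q}\in L^{1}_{loc}$. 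Once $k=0$, $u\in C([0,T);L^{r}_{loc}(\Omega))$ with $u(.,0)=0$ follows from (\ref{tru}) and the $L^{\infty}_{loc}$ interior bound, and the local sup-decay $\lim_{t\to 0}\sup_{Q_{\omega,t}}u=0$ comes from Lemma~\ref{mono}-type barriers (or the heat-semigroup representation of Lemma~\ref{ent}) once the initial data is known to vanish.
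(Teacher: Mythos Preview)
Your treatment of the case $q=2$ is essentially the paper's argument and is correct: the substitution $z=1-e^{-u}$ makes $z$ caloric, boundedness of $z$ rules out a Dirac trace, and extending by zero gives $C^{\infty}$ regularity across $t=0$.

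For $q>2$, however, your proposal contains a genuine error and misses the mechanism the paper uses. You write that ``$|\nabla u|^{q}\geqq 0$ makes $u$ a supersolution of the heat equation'' and hence $u\geqq kG$. This is backwards: from $u_{t}-\Delta u=-|\nabla u|^{q}\leqq 0$, $u$ is \emph{subcaloric}, so comparison with the heat kernel would give $u\leqq kG$, not $u\geqq kG$. Your scaling/blow-down argument therefore collapses, and in any case a lower bound on $u$ would not directly control $|\nabla u|$. The attempt to get a sign on $z_{t}-\Delta z$ also stalls because, as you observed, for $q>2$ the right-hand side $e^{-u}(|\nabla u|^{2}-|\nabla u|^{q})$ has no sign.

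The paper's route is different and does not try to kill a Dirac for $u$ at all. Working with $v=e^{-u}$ (equivalently $z=1-v$), one has $v_{t}-\Delta v=\Phi:=v(|\nabla u|^{q}-|\nabla u|^{2})$. The key observation is the elementary Young bound $|\nabla u|^{2}\leqq |\nabla u|^{q}+1$, giving $\Phi\geqq -v\geqq -1$, so Lemma~\ref{L.4.3} applies with $F\equiv 1$ and yields a weak$^{\ast}$ trace for $z$ together with $\Phi\in L^{1}_{loc}(\Omega\times[0,T))$. The trace is then shown to be zero not by any comparison with $G$ but by dominated convergence: from (\ref{R}), $u(\cdot,t)\to 0$ in $L^{1}_{loc}(\Omega_{0})$, hence along a subsequence $u(\cdot,t_{\nu})\to 0$ a.e.\ in $\Omega$; since $0\leqq z\leqq 1$, this forces $z(\cdot,t)\to 0$ in $L^{1}_{loc}(\Omega)$. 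This is the step you were missing. With the trace of $z$ equal to zero, one extends $\overline{z}$ by $0$ and checks from $-\Phi\leqq v=1-z$ that $\overline{z}$ is a subsolution of $w_{t}-\Delta w+w=1$ in $Q_{\Omega,-T,T}$. Using the explicit supersolution $y_{K}(t)=1-Ke^{-t}$ as a barrier on $Q_{B_{r},-\tau,\tau}$ (the lateral boundary values being controlled because $\overline{u}$, hence $\overline{z}$, is bounded away from the origin by subcaloricity), one concludes $\overline{z}<1$ strictly on compact sets, whence $\overline{u}=-\ln(1-\overline{z})$ is locally bounded. The final $\sup$-decay is obtained from the mean-value inequality for the subcaloric function $\overline{u}$, not from Lemma~\ref{mono}.
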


\begin{proof}
Let us set
\begin{equation}
z=1-v,\qquad v=e^{-u}, \label{zv}%
\end{equation}
Notice that $z$ is an increasing function of $u$ and $z$ takes its values in
$\left[  0,1\right]  .\medskip$

(i) Case $q=2.$ From Theorem \ref{T.2.1}, $u$ is a classical solution in
$Q_{\Omega,T}.$ Then $z$ is a classical solution of the heat equation
\[
z_{t}-\Delta z=0
\]
in $Q_{\Omega,T},$ and $z\in C(\Omega_{0}\times\left[  0,T\right)  )$ and
$z(x,0)=0$ for $x\neq0.$ From Lemma \ref{L.4.3}, $z$ converges weak$^{\ast}$
to a Radon measure $\mu$ as $t\rightarrow0,$ necessarily concentrated at $0,$
from (\ref{R}), since $z\leqq u.$ Then $\mu=0,$ because $z$ is bounded. As for
$u,$ defining the extension $\overline{z}$ of $z$ by $0$ for $t\in\left(
-T,0\right)  ,$ we find that $\overline{z}$ is a solution of heat equation in
$Q_{\Omega,-T,T},$ then $\overline{z}\in C^{\infty}(Q_{\Omega,-T,T}).$ Hence
$\overline{z}$ is strictly locally bounded by 1, thus also $\overline{u}\in
C^{\infty}(Q_{\Omega,-T,T}),$ thus $u(0,0)=0,$ and the proof is done.\medskip

(ii) Case $q>2.$ We regularize equation (\ref{un}) and obtain
\[
(u_{\varepsilon})_{t}-\Delta u_{\varepsilon}+(|\nabla u|^{q})_{\varepsilon
}=0,
\]
and we set $v^{\varepsilon}=e^{u_{\varepsilon}}.$Then $v^{\varepsilon}$
satisfies the equation
\[
v_{t}^{\varepsilon}-\Delta v^{\varepsilon}=v^{\varepsilon}\left(  |\nabla
u|^{q})_{\varepsilon}-|\nabla u_{\varepsilon}|^{2}\right)  .
\]
Observe that $v^{\varepsilon}$ is not the regularisation of $v,$ but it has
the same convergence properties. Going to the limit as $\varepsilon
\rightarrow0$, we obtain%
\[
v_{t}-\Delta v=v(|\nabla u|^{q}-|\nabla u|^{2})
\]
in $\mathcal{D}^{\prime}(Q_{\Omega,T}).$ Next we apply lemma \ref{L.4.3} to
$v,$ with
\[
\Phi=v[|\nabla u|^{q}-|\nabla u|^{2}]\in L_{loc}^{1}(Q_{\Omega,T}),\qquad
F=-1,
\]
since from the Young inequality, $\Phi\geqq-v\geqq-1$. Then $z(.,t)$ converges
weak$^{\ast}$ to a Radon measure $\mu$ as $t\rightarrow0,$ and $\Phi\in
L_{loc}^{1}(\Omega\times\lbrack0,T));$ and for any $\varphi\in C_{c}%
^{2}(\Omega\times\lbrack0,T))$ there holds%
\begin{equation}
\int_{0}^{T}\int_{\Omega}z(\varphi_{t}+\Delta\varphi)dxdt=\int_{0}^{T}%
\int_{\Omega}\Phi\varphi dxdt+\int_{\Omega}\varphi(x,0)d\mu,\hspace{0.05in}
\label{moc}%
\end{equation}
from (\ref{tru}). We claim that $\mu=0$ and the extension of $z$ by $0$ for
$t=0$ satisfies
\[
z\in C(\left[  0,T\right)  ,L_{loc}^{1}\left(  \Omega\right)  ).
\]
Indeed, from assumption (\ref{R}), $u(.,t)$converges to $0$ in $L_{loc}%
^{1}\left(  \Omega_{0}\right)  $ as $t\rightarrow0$, thus also $z(.,t).$ For
any sequence $(t_{n})$ tending to $0,$ we can extract a (diagonal) subsequence
such that $u(.,t_{\nu})$ converges to $0,$ a.e. \ in $\Omega.$ Since $z$ is
bounded, it follows that $(z(.,t_{\nu}))$ converges to $0$ in $L_{loc}%
^{1}\left(  \Omega\right)  $ from the Lebesgue theorem. And then $z(.,t)$
converges to $0$ in $L_{loc}^{1}\left(  \Omega\right)  $ as $t\rightarrow
0$.\medskip

\noindent We still consider the extension $\overline{z}$ of $z$ by $0$ on for
$t\in\left(  -T,0\right)  .$ For any $\phi\in\mathcal{D}^{+}(Q_{\Omega
,-T,T}),$ we have from (\ref{moc}),%
\begin{align*}
-\int_{-T}^{T}\int_{\Omega}\overline{z}(\phi_{t}+\Delta\phi)dxdt  &
=-\int_{0}^{T}\int_{\Omega}z(\phi_{t}+\Delta\phi)dxdt=-\int_{0}^{T}%
\int_{\Omega}\Phi\varphi dxdt\\
&  \leqq\int_{0}^{T}\int_{\Omega}(1-z)\varphi dxdt\leqq\int_{-T}^{T}%
\int_{\Omega}(1-\overline{z})\varphi dxdt.
\end{align*}
\medskip Then $\overline{z}$ is a subsolution of equation
\begin{equation}
w_{t}-\Delta w+w=1 \label{flic}%
\end{equation}
in $\mathcal{D}^{\prime}(Q_{\Omega,-T,T}).$ Otherwise $\overline{u}$ is the
weak solution of equation (\ref{un}) in $Q_{\Omega_{0},-T,T}$, then
$\overline{u}$ is subcaloric. As a consequence, for any $\tau\in(0,T)$, and
any ball $B_{2r}\subset\subset\Omega$, the function $\overline{u}$ is
essentially bounded on $Q_{B_{2r}\backslash\overline{B_{r/2}},-\tau,\tau}$ by
a constant $M_{r,\tau},$ and then $\overline{z}\leqq1-e^{-M_{r,\tau}%
}=m_{r,\tau}<1$ on this set. For any $K>0$ the function $y_{K}(t)=1-Ke^{-t}$
is a solution of equation (\ref{flic}). Taking $K=e^{-(M_{r,\tau}+\tau+1)},$
we can apply the comparison principle in $Q_{B_{r},-\tau,\tau}$ to the
regularisation $\overline{z}_{\varepsilon}$ of $\overline{z}$ for
$\varepsilon$ small enough, and deduce that $\overline{z}\leqq y_{K}$ a.e. in
$Q_{B_{r},-\tau,\tau},$ and then
\[
\overline{z}\leqq1-e^{-(M_{r,\tau}+2\tau+1)}<1\text{ \qquad in\ }%
Q_{B_{r},-\tau,\tau}.
\]
Hence $\overline{u}=-\ln(1-\overline{z})$ is essentially bounded in
$Q_{B_{r},-\tau,\tau}.$ Finally $\overline{u}\in L_{loc}^{\infty}%
(Q_{\Omega,-T,T}),$ from the subcaloricity, hence $u\in L_{loc}^{\infty
}(Q_{\Omega,T})$.\medskip

\noindent Besides, for any $0<s<t<\tau,$ and any domain $\omega\subset
\subset\Omega,$%
\[
|u(.,t)-u(.,s)|\leqq e^{\left\Vert \overline{u}\right\Vert _{L^{\infty
}(Q_{\omega,-\tau,\tau})}}|z(.,t)-z(.,s)|;
\]
then $u\in\mathcal{C}([0,T);L_{loc}^{1}(\Omega)),$ and $u\in C([0,T);L_{loc}%
^{r}(\Omega))$, for any $r>1,$ since $u$ is locally bounded.\medskip

\noindent Furthermore, for any ball $B(x_{0},2\rho)\subset\Omega$, and any
$t\in\left(  \rho^{2}-T,T\right)  $,
\[
\sup_{B(x_{0},\rho)\times\left(  t-\rho^{2},t)\right)  }\overline{u}\leqq
C\rho^{-(N+2)}\int_{t-\rho^{2}}^{t}\int_{B(x_{0},2\rho)}\overline{u}dxds,
\]
where $C=C(N),$ see for example \cite[Theorem 6.17]{Li}. Hence for any
$t\in\left(  0,\tau\right)  $ and $\rho<T^{1/2},$ we find
\[
\sup_{B(x_{0},\rho)\times\left(  0,t)\right)  }u\leqq C\rho^{-(N+2)}\int
_{0}^{t}\int_{B(x_{0},2\rho)}udxds\leqq C\rho^{-(N+2)}t\left\Vert u\right\Vert
_{L^{\infty}(Q_{B(x_{0},2\rho),\tau})},
\]
which achieves the proof.
\end{proof}

\subsection{Global removability in $\mathbb{R}^{N}$}

Next we show Theorem \ref{comp} relative to $\Omega=\mathbb{R}^{N}.$ It is a
consequence of Proposition \ref{muji} in case $1<q<2$. In fact the result is
general, as shown below:

\begin{proposition}
\label{pro}Let $q>1.$ Let $u$ be any non-negative weak subsolution of equation
(\ref{un}) in$\hspace{0.05in}Q_{\mathbb{R}^{N},T}$ such that $u\in
C((0,T,L_{loc}^{1}(\mathbb{R}^{N})),$ and
\begin{equation}
\lim_{t\rightarrow0}\int_{\mathbb{R}^{N}}u(.,t)\psi dx=0, \label{col}%
\end{equation}
for any $\psi\in$ $C_{c}\left(  \mathbb{R}^{N}\right)  .$ Then $u\equiv0.$
\end{proposition}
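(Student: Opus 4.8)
The plan is to compare $u$ with a family of decaying global supersolutions obtained by scaling the functions $Y$, $\Gamma$ or the large solutions $Y_\eta^{B_n}$, and to exploit that the initial trace of $u$ is zero everywhere (including at the origin), so that the only candidate supersolution surviving in the limit has zero initial data and is forced to vanish. Concretely, I would first pass from the subsolution $u$ to its regularizations $u_\varepsilon$, which are genuine subsolutions on $Q_{\omega,s,\tau}$ by Lemma \ref{subso}; this is needed because comparison theorems require some regularity and we only know $u$ is subcaloric a priori. Since $u$ is subcaloric and in $C((0,T);L^1_{loc}(\mathbb{R}^N))$, it is locally bounded on $Q_{\mathbb{R}^N,s,T}$ for every $s>0$, so the regularization is legitimate.

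The core comparison argument would run as follows. Fix $\tau\in(0,T)$ and a ball $B_r$. As in the proof of Proposition \ref{muji}, for $\eta$ small and $R>r$ one compares $u$ on $Q_{B_r,\delta,\tau}$ with $Y_{2\eta}^{B_R}(x,t-\delta)+M_{r}(\tau)+\varepsilon$, where $M_r(\tau)=\max_{\partial B_r\times[0,\tau]}u$; letting $\delta\to0$, then $\eta\to0$, then $R\to\infty$ gives $u\le Y+M_r(\tau)$ on $Q_{B_r,\tau}$, exactly as in Proposition \ref{muji}. The point for the global statement is that when $\Omega=\mathbb{R}^N$ one has a genuine bound on $M_r(\tau)$: indeed $u$ satisfies the hypotheses near $\partial B_r$, and from the decay of the stationary supersolution $\Gamma$ (when $q<2$) or from Lemma \ref{super} together with the universal gradient bound (for all $q$), $u$ is uniformly bounded outside a fixed neighborhood of $0$, with a bound that tends to $0$ as the neighborhood recedes. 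Letting $r\to\infty$ then yields $u\le Y$ on $Q_{\mathbb{R}^N,\tau}$ with $Y$ the self-similar solution of Proposition \ref{L.3.4}.

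To conclude $u\equiv0$, I would argue that $Y$ itself must be taken with the correct normalization. When $q\ge q_\ast$, Proposition \ref{L.3.4} already gives $Y\equiv0$, so $u\le 0$ and we are done. When $1<q<q_\ast$, the function $Y$ is the minimal VSS and is \emph{not} zero, so the bound $u\le Y$ is not by itself enough; here the extra ingredient is condition (\ref{col}) at the origin. The argument is: $u$ has a nonnegative Radon measure trace $u_0$ on $\mathbb{R}^N$ by Proposition \ref{dic} (applicable because $u\le Y$ forces $u\in L^\infty_{loc}([0,T);L^1_{loc})$), and (\ref{col}) tested against $\psi\in C_c(\mathbb{R}^N)$ with $\psi(0)=1$ forces $u_0=0$ on all of $\mathbb{R}^N$, in particular no Dirac mass at $0$. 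Then, rerunning the comparison with the large solutions $Y_\eta^{B_n}$ started from a small time $\delta$ where $\int_{B_\rho}u(\cdot,\delta)\,dx$ is as small as we like (by (\ref{col})), one finds $u\le$ (a solution with arbitrarily small mass), and passing to the limit forces $u\le$ the zero solution. Equivalently, since $u_0=0$ and $u$ is then a weak solution in $Q_{\mathbb{R}^N,T}$ in the sense of (\ref{hou}) with zero initial data, uniqueness of such solutions (the class in which existence/uniqueness holds for $\mathcal{M}_b$ data, \cite{BeLa99, BASoWe}) gives $u\equiv0$.

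The main obstacle I expect is the passage from a mere subsolution with only $C((0,T);L^1_{loc})$ regularity to a situation where comparison principles legitimately apply: one has to regularize, keep track of the fact that $|\nabla u_\varepsilon|^q\le (|\nabla u|^q)\ast\varrho_\varepsilon$ (Lemma \ref{subso}), and ensure the boundary/initial data of the comparison functions dominate those of $u_\varepsilon$ uniformly — the delicate point being the behavior as $t\to0$ near the lateral boundaries $\partial B_r$, handled via (\ref{col}) and the estimate $k\int_{1/k}^{2/k}\int u\varphi\,dxdt\le C\sup_{[1/k,2/k]}\int_{\mathrm{supp}\,\varphi}u(\cdot,t)\,dx$ of the type used in Proposition \ref{three}. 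Once the comparison $u\le Y$ is in place, the reduction to $Y\equiv0$ (for $q\ge q_\ast$) or to the zero-data uniqueness (for $q<q_\ast$) is routine given the results already established.
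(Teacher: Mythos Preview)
Your approach is genuinely different from the paper's, but it has real gaps that prevent it from covering all $q>1$ and from handling the fact that $u$ is only a \emph{subsolution}.

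\textbf{Gap 1: the range $q\geqq 2$.} The comparison function $Y$ of Proposition \ref{L.3.4} is constructed only for $1<q<2$; its construction uses the stationary supersolution $\Gamma(|x|)$, which is available only in that range. Thus the step ``$u\leqq Y$ and $Y\equiv0$ for $q\geqq q_\ast$'' simply does not exist when $q\geqq 2$. You gesture at Lemma \ref{super} and universal bounds, but these give no analogue of $Y$ for $q\geqq 2$.

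\textbf{Gap 2: subsolution versus solution.} Proposition \ref{muji} and the comparison with $Y_{2\eta}^{B_R}(x,t-\delta)$ rely on Corollary \ref{sous} to get $u\in C(\Omega_0\times[0,T))$ with $u(x,0)=0$; that regularity comes from Theorem \ref{T.2.1}, which is for \emph{solutions}. For a weak subsolution you have only subcaloricity, and you cannot verify the pointwise domination on the parabolic boundary $\{t=\delta\}$ that the comparison needs. The same issue contaminates your subcritical endgame: uniqueness in the $\mathcal{M}_b$-class of \cite{BeLa99}, \cite{BASoWe} is a statement about solutions, and a subsolution with zero trace is not covered by it.

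\textbf{What the paper actually does.} The proof is a short, self-contained energy estimate and needs none of the machinery you invoke. One tests the subsolution inequality against $\psi=\xi^{q'}$ with $\xi$ a cutoff at scale $R$, uses Young/H\"older to absorb the gradient term, and lets $s\to0$ via (\ref{col}) to get
\[
\int_{B_r}u(.,t)\,dx+\tfrac12\int_0^t\!\!\int_{B_r}|\nabla u|^q\,dx\,dt\leqq C_q\,t\,R^{\,N-q'}.
\]
If $q<N/(N-1)$ then $N-q'<0$ and $R\to\infty$ gives $u\equiv0$. For $q\geqq N/(N-1)$ one reduces to a smaller exponent: fix $k\in(1,N/(N-1))$; the elementary inequality $\eta|\nabla u|^{k}\leqq \eta+|\nabla u|^{q}$ shows that $w_\eta=\eta^{1/(k-1)}(u-\eta t)$ is a weak subsolution of the equation with exponent $k$, and by Kato's inequality so is $w_\eta^{+}$. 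Applying the first case to $w_\eta^{+}$ and letting $\eta\to0$ finishes. This argument works uniformly for all $q>1$ and uses only the subsolution property.
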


\begin{proof}
From Lemmas \ref{appro} and \ref{subcal}, since $u\in C((0,T,L_{loc}%
^{1}(\mathbb{R}^{N})),$ there holds
\[
\int_{\mathbb{R}^{N}}u(.,t)\psi dx-\int_{\mathbb{R}^{N}}u(.,s)\psi dx+\int
_{s}^{\tau}\int_{\mathbb{R}^{N}}(\nabla u.\nabla\psi+|\nabla u|^{q}\psi
dxdt\leqq0,
\]
for any $\psi\in$ $C_{c}^{2,+}(\mathbb{R}^{N}),$ and any $(s,t)\subset\left(
0,T\right)  .$Taking $\psi=\xi^{q^{\prime}}$ with $\xi\in\mathcal{D}%
^{+}(\mathbb{R}^{N})$ and using H\"{o}lder inequality, we deduce
\begin{align*}
\int_{\mathbb{R}^{N}}u(.,t)\psi dx-\int_{\mathbb{R}^{N}}u(.,s)\psi dx+\int
_{s}^{t}\int_{\mathbb{R}^{N}}|\nabla u|^{q}\psi dxdt  &  \leqq q^{\prime}%
(\int_{s}^{t}\int_{\mathbb{R}^{N}}|\nabla u|^{q}\psi dx)^{\frac{1}{q}}%
(\int_{s}^{t}\int_{\mathbb{R}^{N}}|\nabla\xi|^{q^{\prime}}dx)^{\frac
{1}{q^{\prime}}}\\
&  \leqq\frac{1}{2}\int_{s}^{t}\int_{\mathbb{R}^{N}}|\nabla u|^{q}\psi
dx+C_{q}\int_{s}^{t}\int_{\mathbb{R}^{N}}|\nabla\xi|^{q^{\prime}}dx
\end{align*}
with $C_{q}=(2(q-1))^{q^{\prime}}.$We choose for any $R>r>0$,
\[
\xi(x)=\phi(\frac{\left\vert x\right\vert }{R}),\text{ where }\phi(\left[
0,\infty\right)  )\subset\left[  0,1\right]  ,\quad\phi\equiv1\text{ in
}\left[  0,1\right]  ,\quad\phi\equiv0\text{ in }\left[  2,\infty\right)  ,
\]
and go to the limit as $s\rightarrow0$ from (\ref{col}). It follows that
\begin{equation}
\int_{B_{r}}u(.,t)dx+\frac{1}{2}\int_{0}^{t}\int_{B_{r}}|\nabla u|^{q}%
dxdt\leqq C_{q}tR^{N-q^{\prime}}. \label{II1.35}%
\end{equation}
$\bullet$ First assume $q<N/(N-1);$ then $N-q^{\prime}<0.$ Letting
$R\rightarrow\infty,$ we deduce that $\int_{B_{r}}u(.,t)dx=0,$ for any $r>0$,
thus $u\equiv0.$\medskip

\noindent$\bullet$ Next assume $q\geqq N/(N-1)$. Then we fix some $k\in\left(
1,N/(N-1)\right)  ;$ for any $\eta\in\left(  0,1\right)  $, there holds
$\eta|\nabla u|^{k}\leqq\eta+|\nabla u|^{q},$ hence the function $w_{\eta
}=\eta^{1/(k-1)}(u-\eta t)$ satisfies
\[
(w_{\eta})_{t}-\Delta w_{\eta}+|\nabla w_{\eta}|^{k}\leqq0
\]
in the weak sense. Thanks to Kato's inequality, see for example \cite{Os} or
\cite{BaPi}, we deduce that
\begin{equation}
(w_{\eta}^{+})_{t}-\Delta w_{\eta}^{+}+|\nabla w_{\eta}^{+}|^{k}\leqq0,
\end{equation}
in $\hspace{0.05in}\mathcal{D}^{\prime}(\mathbb{Q}_{\mathbb{R}^{N},T}).$
Moreover $w_{\eta}\in C(\left[  0,T\right)  ,L_{loc}^{1}(\mathbb{R}^{N})),$
and, for any $r>0,$
\[
\lim_{t\rightarrow0^{+}}\int_{B_{r}}w_{\eta}^{+}(.,t)dx=\eta^{-\frac{1}{k-1}%
}\lim_{t\rightarrow0^{+}}\int_{B_{r}}(u(.,t)-\eta t)^{+}dx=0.
\]
By the above proof, $w_{\eta}^{+}\equiv0.$ Letting $\eta$ tend to $0$ we get
again $u\equiv0.$
\end{proof}

\subsection{Behaviour of the approximating sequences}

When $q$ is critical or supercritical, a simple question is to know what can
happen to a sequence of solutions with smooth initial data converging to the
Dirac mass, and one can expect that that it converges to $0.$ We get more
generally the following:

\begin{theorem}
Assume that $q\geqq q_{\ast}.$ Let $\left(  \varphi_{\varepsilon}\right)  $ be
any sequence in $\mathcal{D}^{+}\left(  \mathbb{R}^{N}\right)  $, with
\textrm{supp }$\varphi_{\varepsilon}\in B_{\varepsilon}.$ Then the sequence
$\left(  u_{\varepsilon}\right)  $ of solutions of (\ref{un}) in
$Q_{\mathbb{R}^{N},\infty},$ with inital data $\varphi_{\varepsilon},$
converges to $0$ in $C_{loc}(Q_{\mathbb{R}^{N},\infty}).$ In the same way, if
$\Omega$ is bounded, the sequence $\left(  u_{\varepsilon}^{\Omega}\right)  $
of solutions of $(D_{\Omega,\infty},$ with initial data $\varphi_{\varepsilon
},$ converges to $0$ in $C_{loc}(\overline{\Omega}\times(0,\infty)).$
\end{theorem}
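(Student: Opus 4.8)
The plan is to exploit the compactness already established. First I would set up the two cases uniformly: in $Q_{\mathbb{R}^{N},\infty}$ the solution with initial data $\varphi_{\varepsilon}$ is $u_{\varepsilon}$, and since $\mathrm{supp}\,\varphi_{\varepsilon}\subset B_{\varepsilon}$ with $B_{\varepsilon}\subset B_{1}$ for $\varepsilon<1$, the mass $k_{\varepsilon}=\int\varphi_{\varepsilon}$ may be unbounded, so a direct $L^{1}$-bound is not available. Instead, the key first step is a uniform local upper bound independent of the data. For $q<2$ this comes from Proposition~\ref{L.3.4} and Proposition~\ref{muji}: for any $\eta<1$, once $\varepsilon<\eta$ the initial datum $\varphi_\varepsilon$ is dominated by the datum of $Y_{\eta}^{B_{R}}$ (infinite on $B_\eta$), so by the comparison principle $u_{\varepsilon}\leqq Y_{\eta}^{B_{R}}$ on $Q_{B_{R},\infty}$; letting $R\to\infty$ and $\eta\to 0$ gives $u_{\varepsilon}\leqq Y$, and $Y\equiv 0$ because $q\geqq q_{\ast}$. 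For $q\geqq 2$ one argues on $z_{\varepsilon}=1-e^{-u_{\varepsilon}}\leqq 1$, which is automatically uniformly bounded, and one uses the subcaloricity / change-of-unknown estimates of Theorem~\ref{the3.10} to get a uniform bound on $u_\varepsilon$ on compact subsets $Q_{\omega,s,\tau}$.

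Second, with the uniform local bound in hand, I would invoke the compactness parts of Theorems~\ref{T.2.1}(ii) and \ref{Dir}(ii) (for $q\leqq 2$) — and the corresponding local regularity for $q\geqq 2$ via the bounded $z$ and the heat equation — to extract from any subsequence a further subsequence converging in $C^{2,1}_{loc}(Q_{\mathbb{R}^{N},\infty})$ (resp.\ $C^{1,0}_{loc}(\overline{\Omega}\times(0,\infty))$ for the Dirichlet case) to a weak solution $u$ of (\ref{un}) in $Q_{\mathbb{R}^{N},\infty}$ (resp.\ of $(D_{\Omega,\infty})$).

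Third, I would identify the limit $u$ as a solution of problem $(P_{\mathbb{R}^{N}})$, i.e.\ verify that $u$ satisfies condition (\ref{R}) at $t=0$. This is the step I expect to be the main obstacle: one must show $\lim_{t\to 0}\int u(.,t)\varphi\,dx=0$ for all $\varphi\in C_{c}(\mathbb{R}^{N}\setminus\{0\})$, uniformly enough to pass through the limit $\varepsilon\to 0$. Here I would use the test-function inequality: multiplying the equation for $u_\varepsilon$ by $\psi=\xi^{q'}$ with $\xi\in\mathcal{D}^{+}(\mathbb{R}^{N})$ supported away from $0$, and using that $\int\varphi_\varepsilon\psi\,dx=0$ once $\varepsilon$ is small enough that $\mathrm{supp}\,\varphi_\varepsilon\cap\mathrm{supp}\,\xi=\emptyset$, one obtains, exactly as in the proof of Proposition~\ref{pro}, a bound of the form $\int_{\mathrm{supp}\,\xi}u_\varepsilon(.,t)\xi^{q'}dx\leqq C_q\, t\int|\nabla\xi|^{q'}dx$ for $t$ small, with a constant independent of $\varepsilon$. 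Passing to the limit $\varepsilon\to 0$ gives the same estimate for $u$, hence (\ref{R}); moreover this shows $u\in L^{\infty}_{loc}([0,T);L^{1}_{loc}(\mathbb{R}^{N}\setminus\{0\}))$, so $u$ is a genuine solution of $(P_{\mathbb{R}^{N}})$.

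Finally I would apply the removability theorems: by Theorem~\ref{stop} (and Theorem~\ref{comp} when $\Omega=\mathbb{R}^{N}$), such a $u$ is identically $0$. Since every subsequence of $(u_\varepsilon)$ has a further subsequence converging locally uniformly to $0$, the whole sequence converges to $0$ in $C_{loc}(Q_{\mathbb{R}^{N},\infty})$. For the Dirichlet case, the lateral boundary condition $u_\varepsilon^{\Omega}=0$ on $\partial\Omega\times(0,\infty)$ is preserved in the $C^{1,0}_{loc}(\overline{\Omega}\times(0,\infty))$ limit, so the limit $u$ solves $(D_{\Omega,\infty})$ with a possible singularity at $(0,0)$; applying the Dirichlet removability statement (Theorem~\ref{stop} with $\Omega$ bounded, using that for bounded $\Omega$ the solution is bounded near $\partial\Omega$ by Theorem~\ref{apri} or Lemma~\ref{mono}) forces $u\equiv 0$, and again the full sequence converges to $0$ in $C_{loc}(\overline{\Omega}\times(0,\infty))$.
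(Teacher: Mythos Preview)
For $q_*\leqq q<2$ your comparison with $Y_\eta^{B_R}$ and the conclusion $u_\varepsilon\leqq Y\equiv 0$ is exactly the paper's argument (the paper writes it as $u_\varepsilon\leqq Y_{2\varepsilon}\to 0$). Note that this already \emph{finishes} the case $q<2$; the subsequent compactness, identification and removability steps are superfluous there.

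For $q\geqq 2$ your route diverges from the paper's and carries a real gap. Your compactness step invokes ``the corresponding local regularity for $q\geqq 2$ via the bounded $z$ and the heat equation'', but for $q>2$ the function $z_\varepsilon=1-e^{-u_\varepsilon}$ does \emph{not} solve the heat equation: it satisfies $(z_\varepsilon)_t-\Delta z_\varepsilon=(1-z_\varepsilon)(|\nabla u_\varepsilon|^q-|\nabla u_\varepsilon|^2)$, and controlling this right-hand side uniformly in $\varepsilon$ requires precisely the gradient bounds you lack. The paper explicitly restricts its $C^{2,1}$ regularity and compactness (Theorems~\ref{T.2.1}, \ref{Dir}) to $q\leqq 2$, so extracting a $C_{loc}$-convergent subsequence is not justified for $q>2$. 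Similarly, the uniform $L^\infty$ bound ``from Theorem~\ref{the3.10}'' is not automatic: that theorem handles a single solution, and its bound near $x=0$ depends on the constant $M_{r,\tau}$, which you would still have to control uniformly in $\varepsilon$.

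The paper sidesteps all of this with a short reduction. Fix $k\in(q_*,2)$; by Young's inequality $\eta|\nabla u_\varepsilon|^k\leqq \eta+|\nabla u_\varepsilon|^q$, so $w_{\varepsilon,\eta}=\eta^{1/(k-1)}(u_\varepsilon-\eta t)$ is a subsolution of equation~(\ref{un}) with exponent $k$. Comparing with the solution $v_\varepsilon$ of the $k$-equation with initial datum $\varphi_\varepsilon$ gives $u_\varepsilon\leqq \eta t+\eta^{-1/(k-1)}v_\varepsilon$. Since $q_*<k<2$, the already-proved case yields $v_\varepsilon\to 0$ in $C_{loc}$, hence $\limsup\|u_\varepsilon\|_{L^\infty(\mathcal K)}\leqq \eta\tau$ for every $\eta>0$. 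For the Dirichlet problem the paper simply notes $u_\varepsilon^\Omega\leqq u_\varepsilon$ and reduces to $\mathbb{R}^N$; no separate boundary analysis or removability argument is needed.
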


\begin{proof}
Let $\varepsilon\in\left(  0,1\right)  .$ Since $u_{\varepsilon}^{\Omega}\leqq
u_{\varepsilon},$ we only need to prove the result in case $\Omega
=\mathbb{R}^{N}.\medskip$

(i) Case $q<2.$ We use the function $Y_{2\varepsilon}$ defined at (\ref{py}).
There holds $u_{\varepsilon}\leqq Y_{2\varepsilon}$ from the comparison
principle; and $Y_{2\varepsilon}$ converges to $0$ in $C_{loc}^{1}%
(Q_{\mathbb{R}^{N},\infty})$ from Proposition \ref{L.3.4}, then also
$u_{\varepsilon}$.$\medskip$

(ii) Case $q\geqq2.$ Let us fix some $k$ such that $q_{\ast}<k<2.$ As in the
proof of Proposition \ref{pro}, for any $\eta\in\left(  0,1\right)  ,$
$w_{\varepsilon,\eta}=\eta^{1/(k-1)}(u_{\varepsilon}-\eta t)$ satisfies
\begin{equation}
(w_{\varepsilon,\eta})_{t}-\Delta w_{\varepsilon,\eta}+|\nabla w_{\varepsilon
,\eta}|^{k}\leqq0
\end{equation}
in $\hspace{0.05in}\mathcal{D}^{\prime}(Q_{\mathbb{R}^{N},\infty}),$ and
$w_{\varepsilon,\eta}$ $\in L_{loc}^{\infty}(\left[  0,\infty\right)
;L^{\infty}(\mathbb{R}^{N})).$ From the comparison principle we find that
$w_{\varepsilon,\eta}\leqq v_{\varepsilon},$ where $v_{\varepsilon}$ is the
solution of equation (\ref{un}) with $q$ replaced by $k$ and $v_{\varepsilon
}(.,0)=\rho_{\varepsilon};$ hence $u_{\varepsilon}\leqq\eta t+\eta^{1/(k-1)}$.
And $(v_{\varepsilon})$ converges to $0$ in $C_{loc}(Q_{\mathbb{R}^{N},\infty
})$ from (i).\ Let $\mathcal{K=}\left[  s,\tau\right]  \times K$ be any
compact in $Q_{\mathbb{R}^{N},\infty}.$ Then
\[
\lim\sup\left\Vert u_{\varepsilon}\right\Vert _{L^{\infty}(\mathcal{K})}%
\leqq\eta\tau+\eta^{1/(k-1)}\lim\sup\left\Vert v_{\varepsilon}\right\Vert
_{L^{\infty}(\mathcal{K})}=\eta\tau
\]
for any $\eta,$ then $\lim\left\Vert u_{\varepsilon}\right\Vert _{L^{\infty
}(\mathcal{K})}=0.$
\end{proof}

\section{The subcritical case $1<q<q_{\ast}$\label{vs}}

We first recall the following results of \cite[Theorem 3.2 and Proposition
5.1]{BeDa} for the Dirichlet problem.

\begin{theorem}
[\cite{BeDa}]\label{BeDa}Let $1<q<q_{\ast}$ and $\Omega$ be a smooth bounded
domain. Then for any $u_{0}\in\mathcal{M}_{b}(\Omega)$ and any $T\in\left(
0,\infty\right]  $ there exists a weak solution of problem $(D_{\Omega,\infty
})$ such that $u(.,0)=u_{0}$ in the weak sense of $\mathcal{M}_{b}(\Omega):$
\begin{equation}
\lim_{t\rightarrow0}\int_{\Omega}u(.,t)\varphi dx=\int_{\Omega}\varphi
du_{0},\qquad\forall\varphi\in C_{b}(\Omega), \label{cb}%
\end{equation}
and $u$ is given equivalently by the semi-group formula%
\begin{equation}
u(.,t)=e^{t\Delta}u_{0}-\int_{0}^{t}e^{(t-s)\Delta}\left\vert \nabla
u(.,s)\right\vert ^{q}(s)ds\qquad\text{in }L^{1}(\Omega), \label{seg}%
\end{equation}
where $e^{t\Delta}u_{0}$ is the unique weak solution $w$ of the heat equation
such that
\begin{equation}
\lim_{t\rightarrow0}\int_{\Omega}w(.,t)\varphi dx=\int_{\Omega}\varphi
du_{0},\qquad\forall\varphi\in C_{b}(\Omega). \label{limi}%
\end{equation}
Moreover $u\in C^{2,1}(Q_{\Omega,\infty})$, and $u\in C\left(  \overline
{Q_{\Omega,\epsilon,\infty}}\right)  $ for any $\epsilon>0.$ And $u$ is the
unique weak solution of problem $(D_{\Omega,T})$ for any $T\in\left(
0,\infty\right)  .$
\end{theorem}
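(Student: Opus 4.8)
The plan is to obtain $u$ as a limit of classical solutions with smooth data, the whole scheme working precisely because $q<q_{\ast}$. We describe the case $u_{0}\geqq0$ (the signed case is analogous, splitting $u_{0}=u_{0}^{+}-u_{0}^{-}$ and comparing $\pm u_{n}$ with the flows issued from $u_{0}^{\pm}$). First I would approximate: choose $u_{0,n}\in C_{0}^{1}(\overline{\Omega})$, $u_{0,n}\geqq0$, converging weak$^{\ast}$ to $u_{0}$ with $\Vert u_{0,n}\Vert_{L^{1}(\Omega)}\leqq\Vert u_{0}\Vert_{\mathcal{M}_{b}}=:M$ (for instance a cut-off of $u_{0}\ast\varrho_{1/n}$). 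By the classical theory recalled in Section \ref{two}, the Dirichlet problem for (\ref{un}) with datum $u_{0,n}$ has a unique global solution $u_{n}\in C^{2,1}(Q_{\Omega,\infty})\cap C(\overline{\Omega}\times\lbrack0,\infty))$, and $u_{n}\geqq0$ by comparison with the null solution.

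Second, the uniform estimates. Since $|\nabla u_{n}|^{q}\geqq0$, the function $u_{n}$ is a subsolution of the heat equation with datum $u_{0,n}$, so $0\leqq u_{n}(.,t)\leqq e^{t\Delta}u_{0,n}$; this gives at once $\Vert u_{n}(.,t)\Vert_{L^{1}(\Omega)}\leqq M$ and $\Vert u_{n}(.,t)\Vert_{L^{\infty}(\Omega)}\leqq Ct^{-N/2}M$ for $t\in(0,1]$. The decisive point is the gradient bound: differentiating the Duhamel identity $u_{n}(.,t)=e^{t\Delta}u_{0,n}-\int_{0}^{t}e^{(t-s)\Delta}|\nabla u_{n}|^{q}(.,s)\,ds$ and using $\Vert\nabla e^{\sigma\Delta}g\Vert_{L^{q}(\Omega)}\leqq C\sigma^{-(N+1)/2+N/(2q)}\Vert g\Vert_{L^{1}(\Omega)}$, a fixed-point/Gronwall bootstrap yields $\Vert\nabla u_{n}(.,t)\Vert_{L^{q}(\Omega)}\leqq Ct^{-(N+1)/2+N/(2q)}M$ on $(0,1]$, uniformly in $n$. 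Here $q<q_{\ast}$ enters decisively: the iteration closes (with a gain $\tau^{1-\beta}$ for small $\tau$) exactly because $\beta:=q(N+1)/2-N/2<1\Longleftrightarrow q<q_{\ast}$; consequently $\int_{0}^{\tau}\int_{\Omega}|\nabla u_{n}|^{q}\,dx\,dt\leqq C\tau^{1-\beta}M^{q}$, so $|\nabla u_{n}|^{q}$ is bounded in $L^{1}(Q_{\Omega,\tau})$ and equi-integrable up to $t=0$, uniformly in $n$.

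Third, passage to the limit. The uniform bounds for $t$ in compact subsets of $(0,\infty)$, together with interior parabolic regularity (Theorem \ref{T.2.1}) and, since $q<q_{\ast}<2$, boundary regularity (Theorem \ref{Dir}), let me extract a subsequence with $u_{n}\rightarrow u$ in $C^{2,1}_{loc}(Q_{\Omega,\infty})$ and in $C^{1,0}_{loc}(\overline{\Omega}\times(0,\infty))$; thus $u\geqq0$ solves (\ref{un}) classically, vanishes on $\partial\Omega$ for $t>0$, and $u\in C(\overline{Q_{\Omega,\epsilon,\infty}})$ for each $\epsilon>0$. Testing (\ref{fgr}) and using the uniform near-$0$ bound of Step~2, $\bigl|\int_{\Omega}u_{n}(.,t)\psi\,dx-\int_{\Omega}u_{0,n}\psi\,dx\bigr|\leqq Ct^{1-\beta}$ uniformly in $n$, which passes to the limit and gives $u\in C((0,\infty);L^{1}(\Omega))$ with $u(.,t)$ converging weak$^{\ast}$ to $u_{0}$; the representation (\ref{seg}) passes to the limit by dominated convergence using $|\nabla u_{n}|^{q}\rightarrow|\nabla u|^{q}$ in $L^{1}_{loc}$ plus the equi-integrability near $t=0$, the term $e^{t\Delta}u_{0}$ being the standard heat flow of the measure $u_{0}$ characterized by (\ref{limi}).

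Finally, uniqueness. For $t\geqq\epsilon>0$, any weak solution of $(D_{\Omega,T})$ with trace $u_{0}$ is classical and bounded with $\nabla u$ bounded (smoothing effect and Theorem \ref{Dir}), and on $[\epsilon,T]$ two such solutions coincide by the classical parabolic comparison principle, $\xi\mapsto|\xi|^{q}$ being locally Lipschitz. It then suffices to show $\Vert u(.,\epsilon)-\tilde{u}(.,\epsilon)\Vert_{L^{1}(\Omega)}\rightarrow0$ as $\epsilon\rightarrow0$, which follows by subtracting the two representations (\ref{seg}) and a Gronwall argument based on the $L^{1}$--$L^{q}$ heat semigroup estimates and the near-$t=0$ integrability of $|\nabla u|^{q}$ secured by $q<q_{\ast}$. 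I expect the main obstacle to lie precisely in Steps~2 and~4: the scaling-critical bootstrap, where one must track singular-in-time weights through the nonlinear term, and the closing of uniqueness as $t\rightarrow0$, complicated by the fact that $\xi\mapsto|\xi|^{q}$ is not Lipschitz at the origin — which is why the comparison argument is run at positive times and then closed by the $L^{1}$ stability estimate, in the spirit of \cite{BrFr}.
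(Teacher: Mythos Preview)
Your plan is correct and is essentially the original route of \cite{BeLa99}, \cite{BeDa}: approximate by smooth data, control the approximants via the Duhamel formula and the $L^{1}\!\to\!L^{q}$ smoothing of $\nabla e^{t\Delta}$, close a time-weighted bootstrap on $\Vert\nabla u_{n}(.,t)\Vert_{L^{q}}$ (the closing condition being exactly $q<q_{\ast}$), and pass to the limit; uniqueness is then obtained by subtracting the two mild formulations and a singular Gronwall argument. This works, and the pointwise-in-time estimate $\Vert\nabla u(.,t)\Vert_{L^{q}}\leqq Ct^{-(N+1)/2+N/(2q)}$ it produces is useful information in its own right.

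The paper, however, deliberately avoids this bootstrap and gives a shorter argument (Proposition \ref{conv}). Two differences are worth noting. First, for compactness the paper does not use $u_{n}\leqq e^{t\Delta}u_{0,n}$ but the \emph{universal} bound $u_{n}(.,t)\leqq C(1+t^{-1/(q-1)})$ of Lemma \ref{mono}, which is independent of the data and feeds directly into Theorems \ref{T.2.1} and \ref{Dir}. Second, and more to the point, the paper obtains the uniform $L^{1}$ control on $|\nabla u_{n}|^{q}$ in one line from the energy identity
\[
\int_{\Omega}u_{n}(.,t)\,dx+\int_{0}^{t}\!\!\int_{\Omega}|\nabla u_{n}|^{q}\,dx\,ds-\int_{0}^{t}\!\!\int_{\partial\Omega}\frac{\partial u_{n}}{\partial\nu}\,d\sigma\,ds=\int_{\Omega}u_{0,n}\,dx,
\]
so $\Vert\,|\nabla u_{n}|^{q}\Vert_{L^{1}(Q_{\Omega,\infty})}\leqq\Vert u_{0}\Vert_{\mathcal{M}_{b}}$ with no bootstrap at all; the condition $q<q_{\ast}$ enters only afterwards, through the Baras--Pierre estimate \cite[Lemma 3.3]{BaPi}, which from an $L^{1}$ right-hand side yields bounds in $L^{\gamma}((0,\tau);W_{0}^{1,\gamma}(\Omega))$ for every $\gamma<q_{\ast}$. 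Uniqueness is handled in the same spirit: pick $\gamma\in(q,q_{\ast})$, apply Baras--Pierre to $u-v$ with right-hand side $|\nabla u|^{q}-|\nabla v|^{q}\in L^{1}$, use H\"{o}lder to gain a factor $\tau^{(\gamma-q)/(\gamma q)}$, and absorb for $\tau$ small. Your semigroup/Gronwall route reaches the same conclusion but with more bookkeeping of singular time weights; the paper's route is more economical, at the price of not yielding the sharp pointwise-in-$t$ gradient rate that your Step~2 gives.
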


This solution was obtained from the Banach fixed point theorem. The existence
was also obtained by approximation in \cite{Al}, from the pioneer results of
\cite{BoGa}. Here we give a shorter proof of Theorem \ref{BeDa} when $u_{0}$
is nonnegative, and firm in details the convergence\noindent:

\begin{proposition}
\label{conv} Suppose $1<q<q_{\ast}.$ Let $u_{0}\in\mathcal{M}_{b}^{+}%
(\Omega),$ and $\left(  u_{0,n}\right)  $ be any sequence of functions of
$C_{b}^{1}(\overline{\Omega})\cap C_{0}(\Omega)$ converging weak $^{\ast}$ to
$u_{0}$, such that $\left\Vert u_{0,n}\right\Vert _{L^{1}\left(
\Omega\right)  }\leqq\left\Vert u_{0}\right\Vert _{\mathcal{M}_{b}(\Omega)}.$
Let $u_{n}$ be the classical solution of $(D_{\Omega,\infty})$ with initial
data $u_{0,n}.\medskip$

Then $(u_{n})$ converges in $C_{loc}^{2,1}(Q_{\Omega,\infty})\cap
C_{loc}^{1,0}(\overline{\Omega}\times(0,\infty))$ to a function $u\in
L_{loc}^{q}(\left[  0,\infty\right)  ;W_{0}^{1,q}(\Omega))$ and $u$ is the
unique solution of $(D_{\Omega,T}),$ (\ref{cb}) for any $T>0.$ And $u$
satisfies the estimates (\ref{wa}) and (\ref{univ}).
\end{proposition}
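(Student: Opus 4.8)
The overall plan is to realise $u$ as a compactness limit of the classical approximants $u_n$, to identify its initial trace, and to conclude by uniqueness; the technical heart is a set of estimates on the $u_n$ that are uniform in $n$, especially near $t=0$. Since $u_{0,n}\in C^1_0(\overline\Omega)$, each $u_n$ is the classical solution recalled before Theorem \ref{apri}, with $|\nabla u_n|\in C(\overline\Omega\times[0,\infty))$; the universal estimates of Theorem \ref{apri} (equivalently Lemma \ref{mono} with zero lateral data) give $\|u_n(.,t)\|_{L^\infty(\Omega)}\leqq C(1+t^{-1/(q-1)})$ and $\|\nabla u_n(.,t)\|_{L^\infty(\Omega)}\leqq D(t)$ with $C,D$ depending only on $N,q,\Omega$, so $(u_n)$ is bounded in $L^\infty_{loc}((0,\infty);L^\infty(\Omega))$ uniformly in $n$. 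Integrating (\ref{un}) over $\Omega\times(0,T)$ and using $u_n\geqq0$, $u_n=0$ on $\partial\Omega$ (hence $\partial_\nu u_n\leqq0$ there) gives $\|u_n(.,t)\|_{L^1(\Omega)}\leqq\|u_{0,n}\|_{L^1(\Omega)}\leqq\|u_0\|_{\mathcal{M}_b(\Omega)}$ and $\int_0^T\!\int_\Omega|\nabla u_n|^q\leqq\|u_0\|_{\mathcal{M}_b(\Omega)}$, uniformly in $n$. Finally, differentiating the Duhamel formula (\ref{seg}) and combining the $L^1(\Omega)\to W^{1,q}(\Omega)$ smoothing estimate $\|\nabla e^{\tau\Delta}g\|_{L^q(\Omega)}\leqq C\tau^{-\beta}\|g\|_{L^1(\Omega)}$, $\beta=(q+N(q-1))/(2q)$, of the Dirichlet heat semigroup with a bootstrap in the norm $\sup_{0<t<T}t^{\beta}\|\nabla u_n(.,t)\|_{L^q(\Omega)}$ — finite for each $n$ because $u_{0,n}$ is smooth, and controlled uniformly because $q>1$ and, crucially, because $q<q_\ast$ forces $\beta<1$ and $q\beta<1$ — one obtains, uniformly in $n$,
\[
\int_0^\delta\!\int_\Omega|\nabla u_n|^q\,dx\,ds=\int_0^\delta\|\nabla u_n(.,s)\|_{L^q(\Omega)}^q\,ds\leqq C(\|u_0\|_{\mathcal{M}_b(\Omega)},T)\,\delta^{\,1-q\beta}.
\]
This is the a priori bound underlying the fixed-point construction of Theorem \ref{BeDa}, specialised so that the constants depend on $u_{0,n}$ only through $\|u_{0,n}\|_{L^1}$.

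\textbf{Compactness.} By the uniform local bound above, Theorem \ref{T.2.1}(ii) (interior) and Theorem \ref{Dir}(ii) (up to the boundary, valid since $q<2$) yield a subsequence converging in $C^{2,1}_{loc}(Q_{\Omega,\infty})\cap C^{1,0}_{loc}(\overline\Omega\times(0,\infty))$ to a function $u$ which is a weak solution of $(D_{\Omega,T})$ for every $T>0$. Lower semicontinuity of the gradient term gives $\int_0^T\!\int_\Omega|\nabla u|^q\leqq\|u_0\|_{\mathcal{M}_b(\Omega)}$; since $u(.,t)$ vanishes on $\partial\Omega$ (the boundary condition passes to the $C^{1,0}$ limit), Poincaré's inequality gives $u\in L^q_{loc}([0,\infty);W^{1,q}_0(\Omega))$. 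Passing to the pointwise limit in (\ref{wa}) and (\ref{univ}), $u$ satisfies these two estimates.

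\textbf{Initial trace.} Because $|\nabla u_n|^q\geqq0$, (\ref{seg}) gives $u_n(.,t)\leqq e^{t\Delta}u_{0,n}$; letting $n\to\infty$ at fixed $t>0$ (using $u_n(.,t)\to u(.,t)$ and $e^{t\Delta}u_{0,n}\to e^{t\Delta}u_0$, the Dirichlet heat kernel being bounded continuous and $u_{0,n}\to u_0$ weak$^{\ast}$ with $\|u_{0,n}\|_{L^1}\leqq\|u_0\|_{\mathcal{M}_b}$) yields $u(.,t)\leqq e^{t\Delta}u_0$, hence $\limsup_{t\to0}\int_\Omega u(.,t)\varphi\,dx\leqq\int_\Omega\varphi\,du_0$ for every nonnegative $\varphi\in C_b(\Omega)$, by the behaviour (\ref{limi}) of $e^{t\Delta}u_0$. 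For the reverse inequality, take $\psi\in C^2_c(\Omega)$, $0\leqq\psi\leqq1$, and write for $u_n$
\[
\int_\Omega u_n(.,t)\psi\,dx=\int_\Omega u_{0,n}\psi\,dx+\int_0^t\!\int_\Omega u_n\Delta\psi\,dx\,ds-\int_0^t\!\int_\Omega|\nabla u_n|^q\psi\,dx\,ds,
\]
then pass to the limit $n\to\infty$ at fixed $t$: the first two terms converge by weak$^{\ast}$ convergence and the uniform $L^1$-bound (splitting the time integral at a small $\delta$, the $(0,\delta)$-part being $O(\delta)$ uniformly in $n$), while the last converges by the uniform convergence of $|\nabla u_n|^q$ on $\overline\Omega\times[\delta,t]$ (here $q<2$) together with the displayed bound of the first step on $(0,\delta)$. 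Using then $|\nabla u|^q\in L^1(Q_{\Omega,t})$ and letting $t\to0$, one gets $\liminf_{t\to0}\int_\Omega u(.,t)\psi\,dx\geqq\int_\Omega\psi\,du_0$; approximating any nonnegative $\varphi\in C_b(\Omega)$ from below by such $\psi$, and splitting a general $\varphi$ into $\varphi^+-\varphi^-$, gives (\ref{cb}).

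\textbf{Conclusion.} Thus $u$ is a weak solution of $(D_{\Omega,T})$ satisfying (\ref{cb}) and the estimates (\ref{wa}), (\ref{univ}); by the uniqueness in Theorem \ref{BeDa} it is the only such solution. Since every subsequence of the original sequence has, by the previous steps, a further subsequence converging in $C^{2,1}_{loc}(Q_{\Omega,\infty})\cap C^{1,0}_{loc}(\overline\Omega\times(0,\infty))$ to this same $u$, the whole sequence $(u_n)$ converges. The main obstacle is exactly the uniform-in-$n$ control near $t=0$ of $\int_0^\delta\!\int_\Omega|\nabla u_n|^q$: it is what permits passage to the limit in the weak formulation so as to recover the full initial mass $u_0$, and it is precisely here that the subcriticality $q<q_\ast$ is used, through the regularizing effect of the Dirichlet heat semigroup.
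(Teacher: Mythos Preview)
Your argument is correct and reaches the same conclusion, but the machinery you use for the key step near $t=0$ differs from the paper's. The paper obtains strong $L^{1}$ convergence of $|\nabla u_{n}|^{q}$ on $[0,T)$ by invoking the Baras--Pierre estimate \cite[Lemma 3.3]{BaPi}: from the uniform bound $\int_{0}^{\infty}\int_{\Omega}|\nabla u_{n}|^{q}\leqq\|u_{0}\|_{\mathcal{M}_{b}(\Omega)}$ one gets $(u_{n})$ bounded in $L^{\gamma}((0,\tau);W_{0}^{1,\gamma}(\Omega))$ for every $\gamma\in[1,q_{\ast})$, hence equi-integrability of $|\nabla u_{n}|^{q}$ and strong convergence; the limit $u$ then satisfies the semi-group identity (\ref{seg}), which immediately yields the initial trace (\ref{cb}). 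You instead run a quantitative bootstrap on the Duhamel formula using the $L^{1}\!\to\!W^{1,q}$ smoothing of $e^{t\Delta}$, producing the pointwise-in-time bound $t^{\beta}\|\nabla u_{n}(.,t)\|_{L^{q}}\leqq C$ and hence the uniform tail estimate $\int_{0}^{\delta}\int_{\Omega}|\nabla u_{n}|^{q}\leqq C\delta^{1-q\beta}$; this lets you pass to the limit directly in the weak formulation tested against $\psi\in C_{c}^{2}(\Omega)$ and recover the full mass of $u_{0}$. Both methods hinge on the same arithmetic fact $q<q_{\ast}\Leftrightarrow q\beta<1$ (equivalently, the Baras--Pierre exponent range $\gamma<q_{\ast}$), so neither is more general; the paper's route is shorter because the heavy lifting is outsourced to \cite{BaPi}, while yours is more self-contained and makes the role of the heat-kernel regularisation explicit.

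One point worth flagging: the paper states this proposition precisely as an \emph{alternative} proof of Theorem \ref{BeDa}, and accordingly supplies its own uniqueness argument (again via \cite[Lemma 3.3]{BaPi}, a contraction on a short interval). You instead cite Theorem \ref{BeDa} for uniqueness. This is logically sound, since Theorem \ref{BeDa} is quoted from \cite{BeDa} as an established external result, but it bypasses part of what the proposition is meant to accomplish.
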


\begin{proof}
There holds
\[
u_{n}(.,t)=e^{t\Delta}u_{0,n}-\int_{0}^{t}e^{(t-s)\Delta}\left\vert \nabla
u_{n}(.,s)\right\vert ^{q}(s)ds\qquad\text{in }L^{1}(\Omega).
\]
From estimate (\ref{wa}) and Theorem \ref{Dir}, since $q<2,$ one can extract a
subsequence, still denoted $\left(  u_{n}\right)  $, converging in
$C_{loc}^{2,1}(Q_{\Omega,\infty})\cap C_{loc}^{1}(\overline{\Omega}%
\times(0,\infty))$ to a weak solution $u$ of $(D_{\Omega,\infty})$. And
\begin{equation}
\int_{\Omega}u_{n}(.,t)dx+\int_{0}^{t}\int_{\Omega}\left\vert \nabla
u_{n}(.,s)\right\vert ^{q}(s)dxds-\int_{0}^{t}\int_{\partial\Omega}%
\frac{\partial u_{n}}{\partial\nu}(.,s)dxds=\int_{\Omega}u_{0,n}dx;
\label{fou}%
\end{equation}
hence $\left\vert \nabla u_{n}\right\vert ^{q}$ is bounded in $L^{1}%
(Q_{\Omega,\infty})$ by $\left\Vert u_{0}\right\Vert _{\mathcal{M}_{b}%
(\Omega)}.$ Then from \cite[Lemma 3.3]{BaPi}, $\left(  u_{n}\right)  $ is
bounded in $L^{\gamma}((0,\tau),W_{0}^{1,\gamma}(\Omega))$ for any $\gamma
\in\left[  1,q_{\ast}\right)  $. Thus $(\left\vert \nabla u_{n}\right\vert
^{q})$ converges to $\left\vert \nabla u\right\vert ^{q}$ in $L_{loc}%
^{1}(\left[  0,\infty\right)  ,L^{1}(\Omega)),$ and $\left(  e^{t\Delta
}u_{0,n}\right)  $ converges a.e. to $e^{t\Delta}u_{0}$, and $u$ satisfies
(\ref{seg}). Moreover $u$ is the unique solution of $(D_{\Omega,T})$. Indeed
let $v$ be any other solution; taking $\gamma\in\left(  q,q_{\ast}\right)  ,$
there holds from \cite[Lemma 3.3]{BaPi}, with constants $C=C(\gamma,\Omega),$
\begin{align*}
\left\Vert \nabla(u-v)\right\Vert _{L^{\gamma}(Q_{\Omega,\tau})}  &  \leqq
C\left\Vert \left\vert \nabla u\right\vert ^{q}-\left\vert \nabla v\right\vert
^{q}\right\Vert _{_{L^{1}(Q_{\Omega,\tau})}}\\
&  \leqq C(\left\Vert \nabla u\right\Vert _{L^{q}(Q_{\Omega,T})}%
^{q-1}+\left\Vert \nabla v\right\Vert _{L^{q}(Q_{\Omega,T})}^{q-1})\left\Vert
\nabla(u-v)\right\Vert _{L^{q}(Q_{\Omega,\tau})}\\
&  \leqq C\left\Vert u_{0}\right\Vert _{\mathcal{M}_{b}(\Omega)}\left\Vert
\nabla(u-v)\right\Vert _{L^{\gamma}(Q_{\Omega,\tau})}\tau^{\frac{\gamma
-q}{\gamma q}},
\end{align*}
hence $v=u$ on $(0,\tau)$ for $\tau\leqq C=C(\gamma,\Omega,u_{0}),$ and then
on $(0,T).$ Then the whole sequence $(u_{n})$ converges to $u.$
\end{proof}

\begin{remark}
\label{regi} Applying Proposition \ref{conv} on $\left(  \epsilon,T\right)  $
for $\epsilon>0,$ we deduce regularity results: any weak solution $u$ of
$(D_{\Omega,T})$ extends as a solution of the problem $(D_{\Omega,\infty}),$
and $u\in C^{2,1}(Q_{\Omega,\infty})$, and $u\in C\left(  \overline
{Q_{\Omega,\epsilon,\infty}}\right)  $ for any $\epsilon>0,$ and $u$ satisfies
the universal estimates (\ref{wa}) and (\ref{univ}). In turn $u\in
C_{loc}^{1,0}(Q_{\Omega,\infty})$ from Theorem \ref{Dir}.
\end{remark}

\begin{notation}
For any $k>0$, we denote by $u^{k,\Omega}$ the above solution of
$(D_{\Omega,\infty})$ with initial data $k\delta_{0}$.
\end{notation}

\subsection{The case $\Omega=$ $\mathbb{R}^{N}$\label{more}}

We first show that the function $Y$ constructed at Proposition \ref{L.3.4} is
a VSS:

\begin{lemma}
\label{Y} The function $Y$ is a maximal V.S.S. in $Q_{\mathbb{R}^{N},\infty},$
and coincides with the radial self-similar solution constructed in \cite{QW}.
It satisfies
\begin{equation}
\lim_{t\rightarrow0}\int_{\mathbb{R}^{N}\backslash B_{r}}Y(.,t)dx=0,\qquad
\forall r>0.\label{vive}%
\end{equation}

\end{lemma}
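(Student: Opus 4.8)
The plan is to prove Lemma \ref{Y} in three stages: first establish that $Y$ is a V.S.S.\ (i.e.\ verify conditions (\ref{VS}) and the decay/regularity requirements already collected along its construction), then show it satisfies the stronger outer decay (\ref{vive}), and finally identify $Y$ with the self-similar solution of \cite{QW} and argue maximality.

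\textbf{Step 1: $Y$ is a V.S.S.} From Proposition \ref{L.3.4}, $Y$ is a weak solution of (\ref{un}) in $Q_{\mathbb{R}^{N},\infty}$, it is radial and self-similar, $Y\in C^{2,1}(Q_{\mathbb{R}^{N},\infty})$, and it satisfies (\ref{enf}), (\ref{vir}), and (\ref{dir}). The condition (\ref{dir}), namely $\lim_{t\to0}(\sup_{|x|\ge r}Y(x,t))=0$ for every $r>0$, immediately gives (\ref{RR}) (indeed it gives uniform convergence to $0$ away from the origin, so a fortiori the weak$^{\ast}$ version against any $\varphi\in C_c(\mathbb{R}^{N}\setminus\{0\})$). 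To get (\ref{VS}), i.e.\ $\lim_{t\to0}\int_{B_r}Y(.,t)\,dx=\infty$ for all $r>0$: recall $Y=\inf_{\eta>0}Y_\eta$ and each $Y_\eta$ solves (\ref{py}) in the sense (\ref{fol}), in particular $\lim_{t\to0}\inf_{x\in K}Y_\eta(x,t)=\infty$ for every compact $K\subset B_\eta$. If (\ref{VS}) failed, then along some $t_n\to0$ the masses $\int_{B_r}Y(.,t_n)\,dx$ would stay bounded; but $Y$ is a nonnegative weak solution, hence by Proposition \ref{dic} it would admit a Radon measure $Y_0$ as trace on $B_r$, which by (\ref{RR}) is concentrated at $0$, so $Y_0=k\delta_0$ for some finite $k\ge0$. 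By the Remark following Proposition \ref{dic} this forces (since we only need the $L^1_{loc}$ conclusion, but more directly) — actually the cleanest route is: compare $Y$ from below with the solutions $u^{k,\Omega}$ or with the limits $\lim_k u^{k}$. I would instead argue that $Y\ge U$ where $U$ is the VSS obtained as $\lim_{k\to\infty}$ of solutions with data $k\delta_0$ constructed in \cite{BeLaEx}; since each such approximate solution is $\le Y_\eta$ for appropriate $\eta$ (they have compactly supported data) one gets $\le Y$, while their limit is a VSS, and a V.S.S.\ satisfies (\ref{VS}) by definition — but this is circular unless I first exhibit $Y$ itself as that limit. The honest route: use self-similarity. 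Write $Y(x,t)=t^{-a/2}f(t^{-1/2}|x|)$; by \cite[Theorem 2.1]{QW} the self-similar profiles with the decay (\ref{dir}) are classified, and the nonzero one has $\int_{\mathbb{R}^N}f(|y|)\,dy$ possibly infinite or finite — in fact for $q<q_\ast$ it has $\int f<\infty$ is false in general; rather the defining property of the VSS profile is exactly $\int_{B_r}Y(.,t)\,dx=t^{-a/2+N/2}\int_{B_{r/\sqrt t}}f\to\infty$ because $a/2>N/2$ when $q<q_\ast$ (equivalently $a>N$). So \emph{this is the key computation}: $a=(2-q)/(q-1)>N\iff q<(N+2)/(N+1)=q_\ast$, hence $t^{(N-a)/2}\to\infty$ as $t\to0$, and since $\int_{B_{r/\sqrt t}}f(|y|)\,dy\to c_0:=\int_{\mathbb{R}^N}f(|y|)\,dy>0$ (or stays bounded below by a positive constant for $t$ small), we get (\ref{VS}). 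Note $c_0>0$ because $Y\not\equiv0$: one must check $Y\not\equiv0$, which holds because $Y_\eta$ is genuinely singular on $B_\eta$ (from (\ref{fol})), hence $Y_\eta\not\equiv0$, and $Y=\inf_\eta Y_\eta$; the monotonicity $Y_\eta\le Y_{\eta'}$ and a lower barrier (e.g.\ comparison from below in $Q_{B(x_0,s),\infty}$ with a subsolution vanishing at $t=0$, built from the heat kernel, for a point $x_0\ne0$) would be needed to certify the limit is not identically zero — alternatively invoke \cite{QW} where the self-similar VSS is constructed as nonzero directly. I would cite \cite{QW} for $Y\not\equiv0$ and for the existence/uniqueness of the profile, and reconcile the two constructions by uniqueness in the self-similar class.

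\textbf{Step 2: the stronger decay (\ref{vive}).} We have from (\ref{enfi}) that $Y_\eta(x,t)\le\Gamma(|x|-\eta)$ on $Q_{\mathbb{R}^N\setminus\overline{B_\eta},\infty}$, hence letting $\eta\to0$, $Y(x,t)\le\Gamma(|x|)=\gamma_q|x|^{-a}$ on $Q_{\mathbb{R}^N\setminus\{0\},\infty}$ (this is just (\ref{enf})). But $\Gamma$ is independent of $t$ and only $\in L^1_{loc}$ near $\infty$ when $a>N$, which is exactly our case $q<q_\ast$. Then for fixed $r>0$,
\[
\int_{\mathbb{R}^N\setminus B_r}Y(.,t)\,dx\le\int_{\mathbb{R}^N\setminus B_r}\gamma_q|x|^{-a}\,dx<\infty,
\]
a finite quantity \emph{independent of $t$}, which is not yet (\ref{vive}). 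To get the limit $0$, use self-similarity again: $\int_{\mathbb{R}^N\setminus B_r}Y(.,t)\,dx=t^{(N-a)/2}\int_{\mathbb{R}^N\setminus B_{r/\sqrt t}}f(|y|)\,dy$. As $t\to0$, $r/\sqrt t\to\infty$, and since $f(s)\le\gamma_q s^{-a}$ with $a>N$, the tail integral $\int_{|y|\ge r/\sqrt t}f\le C(r/\sqrt t)^{N-a}=C r^{N-a}t^{(a-N)/2}$, so the product is $\le t^{(N-a)/2}\cdot C r^{N-a}t^{(a-N)/2}=Cr^{N-a}$ — again only boundedness. That means pure self-similar scaling of the $\Gamma$-bound is borderline; I must use that $f$ decays strictly faster than $s^{-a}$, or use dominated convergence directly on the $x$-integral. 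The clean argument: $Y(x,t)\le\Gamma(|x|)$ uniformly in $t$, $\Gamma\in L^1(\mathbb{R}^N\setminus B_r)$, and $Y(x,t)\to0$ pointwise a.e.\ for $x\ne0$ as $t\to0$ by (\ref{dir}); hence by dominated convergence $\int_{\mathbb{R}^N\setminus B_r}Y(.,t)\,dx\to0$. That gives (\ref{vive}) cleanly. \emph{This is the main technical point and the place to be careful: one needs the uniform-in-$t$ domination by an $L^1$-at-infinity function, which is precisely where $q<q_\ast$ (i.e.\ $a>N$) enters and why the statement is specific to the subcritical range.}

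\textbf{Step 3: identification with \cite{QW} and maximality.} $Y$ is radial and self-similar, solves (\ref{un}) in $Q_{\mathbb{R}^N,\infty}$, and satisfies (\ref{dir}); by \cite[Theorem 2.1]{QW} the radial self-similar solution with this far-field decay is unique, so $Y$ coincides with it. For maximality: let $\tilde U$ be any other V.S.S. By (\ref{RR}) and $\tilde U\ge0$, $\tilde U\in C^{2,1}(Q_{\mathbb{R}^N,\infty})$ by Theorem \ref{T.2.1} (since $q<q_\ast<2$), and $\tilde U(x,0)=0$ for $x\ne0$. Fix $\eta>0$ and $R$ large; run a comparison in $Q_{B_R,\delta,\tau}$ between $\tilde U$ and $y_{2\eta,R,\delta}=Y_{2\eta}^{B_R}(\cdot,\cdot-\delta)+M_{R,\tau}+\varepsilon$ exactly as in the proof of Proposition \ref{muji} — on the parabolic boundary $\tilde U\le y_{2\eta,R,\delta}$ because $\tilde U\le M_{R,\tau}$ on $\partial B_R$, $\tilde U(x,\delta)\le\varepsilon$ for $\eta\le|x|\le R$ and $\delta$ small by continuity, and $\tilde U(x,\delta)<\infty=y_{2\eta,R,\delta}$ on $\overline{B_\eta}$. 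Letting $\delta\to0$, then $R\to\infty$, $\eta\to0$, $\varepsilon\to0$ (and noting $M_{R,\tau}\to0$ as $R\to\infty$ by a barrier argument, or simply that $\tilde U$ being a V.S.S. satisfies a suitable outer bound — alternatively bound $M_{R,\tau}$ via the stationary supersolution $\Gamma$ as in Lemma \ref{L.3.2}) yields $\tilde U\le Y$ on $Q_{\mathbb{R}^N,\tau}$ for every $\tau$, i.e.\ $Y$ is maximal. I expect Step 2's domination and the handling of the boundary terms $M_{R,\tau}$ in Step 3 to be the only genuinely delicate points; everything else is an assembly of results already established in Sections \ref{two} and \ref{3}.
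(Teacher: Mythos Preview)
Your proof is essentially correct, but it diverges from the paper's argument in two places, with interesting tradeoffs.

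For the property $Y\not\equiv0$ and condition (\ref{VS}), the paper takes a direct route: it shows $u^{k,B_p}\le Y$ by approximating $u^{k,B_p}$ with smooth-data solutions $u_\varepsilon^{k,B_p}$ (supported in $B_\varepsilon$), which are $\le Y_\eta$ for $\varepsilon<\eta$, hence $\le Y$; this single comparison gives both $Y\not\equiv0$ and $\underline{\lim}_{t\to0}\int_{B_r}Y(.,t)\,dx\ge k$ for every $k$, i.e.\ (\ref{VS}). Your approach via self-similarity and the scaling $t^{(N-a)/2}\to\infty$ is valid, but it hinges on $Y\not\equiv0$, which you obtain only indirectly (either by citing \cite{QW}, or --- as your Step~3 implicitly allows --- by noting that Proposition~\ref{muji} already gives $\tilde U\le Y$ for the \cite{QW} solution $\tilde U$, forcing $Y\not\equiv0$). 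Your meandering in Step~1 reflects this: the paper's lower comparison with $u^{k,B_p}$ is the missing clean ingredient.

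For (\ref{vive}), the situation is reversed: your dominated convergence argument --- $Y(x,t)\le\Gamma(|x|)$ with $\Gamma\in L^1(\mathbb{R}^N\setminus B_r)$ since $a>N$, and $Y(x,t)\to0$ pointwise by (\ref{dir}) --- is more elementary than the paper's, which invokes the precise Gaussian asymptotics $\lim_{r\to\infty}r^{a-N}e^{r^2/4}f(r)>0$ from \cite[Theorem~2.1]{QW}. Your argument uses only the coarse stationary barrier $\Gamma$ and avoids the sharp profile analysis.

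For maximality, both proofs reduce to Proposition~\ref{muji}; you re-prove it while the paper simply cites it. Your handling of $M_{R,\tau}\to0$ via Lemma~\ref{L.3.2} is exactly what Proposition~\ref{muji} already contains for $\Omega=\mathbb{R}^N$.
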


\begin{proof}
Consider any ball $B_{p}$ with $p\geqq1.$ We can approximate the function
$u^{k,B_{p}}$ by $u_{\varepsilon}^{k,B_{p}}$\noindent, solution with initial
data $k\rho_{\varepsilon},$ where $\left(  \rho_{\varepsilon}\right)  $ is a
sequence of mollifiers with support in $B_{\varepsilon}\subset B_{1}.$ For any
$\eta\in\left(  0,1\right)  ,$ there holds $u_{\varepsilon}^{k,B_{p}}\leqq
Y_{\eta}$ for $\varepsilon<\eta.$ Then we find $u^{k,B_{p}}\leqq Y.$ As a
first consequence, $Y\neq0,\ $and for any ball $B_{r}$ such that $r<1,$ taking
$\varphi\in C_{c}(B_{r})$ with values in $\left[  0,1\right]  ,$ such that
$\varphi\equiv1$ on $B_{r/2},$
\[
\underline{\lim}_{t\rightarrow0}\int_{B_{r}}Y(.,t)dx\geqq\lim_{t\rightarrow
0}\int_{B_{r}}u^{k,B_{p}}(.,t)\varphi dx=k,\text{ }%
\]
thus $Y$ satisfies (\ref{VS}). From (\ref{fil}), $Y$ is the unique radial
self-similar VSS constructed in \ref{QW}. It satisfies (\ref{vive}), since
$Y(x,t)=t^{-a/2}f(t^{-1/2}\left\vert x\right\vert ),$ and $\lim_{r\rightarrow
\infty}r^{a-N}e^{r^{2}/4}f(r)>0,$ from \cite[Theorem 2.1]{QW}, which implies
(\ref{RR}). And $Y$ is a maximal VSS, since $Y$ is greater than any weak
solution of (\ref{un}), (\ref{R}), from Proposition \ref{muji}.\medskip
\end{proof}

In \cite{BeLaEx}, a VSS $U$ is constructed as the limit of a sequence of
solutions $u^{k}$ of (\ref{un}) in $Q_{\mathbb{R}^{N},\infty}$ with initial
data $k\delta_{0}$, constructed in \cite{BeLa99}. The proof is based on
difficult estimates of the gradient obtained from from the Bernstein technique
by derivation of equation, showing that $U$ satisfies (\ref{vi}), (\ref{plic})
and (\ref{ploc}); and is minimal in that class, from \cite[ Theorem
3.8]{BeKoLa}. Here we prove again the existence of the $u^{k}$ and $U$ in a
very simple way:

\begin{lemma}
\label{solk}(i) For any $k>0$ there exists a weak solution $u^{k}$ of
(\ref{un}) in $Q_{\mathbb{R}^{N},\infty}$, such that $u^{k}\in L^{\infty
}((0,\infty);L^{1}(\mathbb{R}^{N}))$ and $\left\vert \nabla u^{k}\right\vert
\in L^{q}(Q_{\mathbb{R}^{N},\infty}),$ with initial data $k\delta_{0},$ in the
weak sense of $\mathcal{M}_{b}\left(  \mathbb{R}^{N}\right)  $
\begin{equation}
\lim_{t\rightarrow0}\int_{\mathbb{R}^{N}}u^{k}(.,t)\psi dx=k\psi
(0),\qquad\forall\psi\in C_{b}(\mathbb{R}^{N});\label{huc}%
\end{equation}
and $u^{k}=\sup u^{k,B_{p}}$, where $u^{k,B_{p}}$ is the solution of the
Dirichlet problem $(D_{B_{p},\infty})$ with initial data $k\delta_{0}.$

(ii) As $k\rightarrow\infty,$ $u^{k}$ converges in $C_{loc}^{2,1}%
(Q_{\mathbb{R}^{N},\infty})$ to a V.S.S $U$ in $Q_{\mathbb{R}^{N},\infty}.$
\end{lemma}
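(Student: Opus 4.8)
The plan is to build $u^{k}$ as a monotone limit of the Dirichlet solutions on expanding balls, and then $U$ as a monotone limit of the $u^{k}$. For part (i), fix $k>0$ and for each $p\geqq1$ let $u^{k,B_{p}}$ be the solution of $(D_{B_{p},\infty})$ with initial data $k\delta_{0}$, which exists by Proposition \ref{conv} (Theorem \ref{BeDa}) since $q<q_{\ast}<2$. First I would check the monotonicity $u^{k,B_{p}}\leqq u^{k,B_{p+1}}$ in $Q_{B_{p},\infty}$: both are obtained as limits of classical solutions with regularized data $k\rho_{\varepsilon}$ (support in $B_{\varepsilon}$), and on $B_{p}$ the smaller-domain solution vanishes on $\partial B_{p}$ while the larger one is nonnegative there, so the comparison principle for classical solutions (valid here because $q\leqq2$ gives the needed regularity, Theorem \ref{Dir}) gives the inequality, which passes to the limit in $\varepsilon$. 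Next, I would get a uniform bound: testing the equation for $u_{n}^{k,B_{p}}$ (with data $k\rho_{\varepsilon}$) against $1$ as in \eqref{fou}, using that $-\partial u/\partial\nu\leqq0$ on $\partial B_{p}$, yields
\[
\int_{B_{p}}u^{k,B_{p}}(.,t)\,dx+\int_{0}^{t}\int_{B_{p}}|\nabla u^{k,B_{p}}|^{q}\,dxds\leqq k,
\]
uniformly in $p$ and $t$. So $u^{k}:=\sup_{p}u^{k,B_{p}}$ is well defined, lies in $L^{\infty}((0,\infty);L^{1}(\mathbb{R}^{N}))$ with norm $\leqq k$, and $|\nabla u^{k}|\in L^{q}(Q_{\mathbb{R}^{N},\infty})$. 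By Theorem \ref{T.2.1}(ii) the convergence is in $C_{loc}^{2,1}(Q_{\mathbb{R}^{N},\infty})$ and $u^{k}$ is a weak solution of \eqref{un} in $Q_{\mathbb{R}^{N},\infty}$.

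The initial condition \eqref{huc} is the one delicate point in (i). For $\varphi\in C_{b}(\mathbb{R}^{N})$ I would argue as follows. Since each $u^{k,B_{p}}$ satisfies $\int_{B_{p}}u^{k,B_{p}}(.,t)\varphi\,dx\to k\varphi(0)$ as $t\to0$ from \eqref{cb}, and $u^{k,B_{p}}\leqq u^{k}$, taking $\underline{\lim}_{t\to0}$ gives $\underline{\lim}_{t\to0}\int u^{k}(.,t)\varphi\,dx\geqq k\varphi(0)$ when $\varphi\geqq0$; for the reverse, the uniform bound $\int_{B_{r}}u^{k}(.,t)\,dx\leqq k$ together with Proposition \ref{dic} shows $u^{k}(.,t)$ converges weak$^{\ast}$ to a bounded nonnegative measure $\mu$ on $\mathbb{R}^{N}$ with total mass $\leqq k$, and by the same argument on each $B_{p}$ the restriction of $\mu$ to any ball equals $k\delta_{0}$ there, forcing $\mu=k\delta_{0}$; the tail is controlled because, outside $B_{1}$, $u^{k}$ is dominated by a stationary or exponential supersolution as in Lemma \ref{L.3.2}/Lemma \ref{super}, so no mass escapes to infinity. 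This yields \eqref{huc}.

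For part (ii), I would first note $u^{k}\leqq u^{k+1}$ (from $u^{k,B_{p}}\leqq u^{k+1,B_{p}}$, again by comparison of the regularized classical solutions), and $u^{k}\leqq Y$ for every $k$ by Lemma \ref{Y} (shown there: $u^{k,B_{p}}\leqq Y$). Hence $U:=\sup_{k}u^{k}=\lim_{k\to\infty}u^{k}$ exists, is bounded above by $Y$, and by Theorem \ref{T.2.1}(ii) the convergence is in $C_{loc}^{2,1}(Q_{\mathbb{R}^{N},\infty})$ with $U$ a weak solution of \eqref{un}. It remains to check $U$ is a VSS, i.e.\ \eqref{RR} and \eqref{VS}. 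Condition \eqref{RR} follows since $U\leqq Y$ and $Y$ satisfies \eqref{vive}, hence \eqref{RR}. Condition \eqref{VS} follows from below: for fixed $r>0$ and $p>r$, $\underline{\lim}_{t\to0}\int_{B_{r}}U(.,t)\,dx\geqq\underline{\lim}_{t\to0}\int_{B_{r}}u^{k,B_{p}}(.,t)\varphi\,dx=k$ for a cutoff $\varphi\equiv1$ on $B_{r/2}$, and letting $k\to\infty$ gives $\int_{B_{r}}U(.,t)\,dx\to\infty$. Thus $U$ is a VSS in $Q_{\mathbb{R}^{N},\infty}$.

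The main obstacle is establishing \eqref{huc} rigorously: one must rule out loss of mass both at the origin (handled by the lower bound from each $u^{k,B_{p}}$ and the monotone convergence) and at infinity (handled by the supersolution barriers), and then identify the weak$^{\ast}$ limit measure exactly as $k\delta_{0}$ rather than something of smaller mass; the weak$^{\ast}$ convergence itself comes for free from Proposition \ref{dic} once the uniform $L^{\infty}((0,\infty);L^{1})$ bound is in hand.
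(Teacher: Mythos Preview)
Your plan follows the paper's approach closely: build $u^{k}$ as the increasing limit of the Dirichlet solutions $u^{k,B_{p}}$, then $U$ as the increasing limit of the $u^{k}$, using Theorem~\ref{T.2.1}(ii) for the $C^{2,1}_{loc}$ convergence and Proposition~\ref{dic} for the trace.

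There is one genuine gap in part~(i). To invoke Theorem~\ref{T.2.1}(ii) you need the sequence $(u^{k,B_{p}})_{p}$ to be \emph{uniformly locally bounded in $L^{\infty}$}, not just in $L^{1}$. The mass bound $\int_{B_{p}}u^{k,B_{p}}(.,t)\,dx\leqq k$ does not give this, and the universal estimate \eqref{wa} applied on $B_{p}$ degenerates as $p\to\infty$. The paper closes this by using the comparison $u^{k,B_{p}}\leqq Y\leqq C(1+t^{-1/(q-1)})$ from Proposition~\ref{L.3.4} (this is exactly the bound established in Lemma~\ref{Y}, which you already cite in part~(ii)); you should invoke it already in part~(i), before appealing to Theorem~\ref{T.2.1}(ii).

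A secondary point concerns your tail control for \eqref{huc}. You propose dominating $u^{k}$ outside $B_{1}$ by the barriers of Lemma~\ref{L.3.2} or Lemma~\ref{super}. This can be made to work (for $q<q_{\ast}$ one has $a>N$ so $\Gamma$ is integrable at infinity), but it requires checking that the time $\tau_{1}$ in Lemma~\ref{L.3.2} can be chosen uniformly in $p$, which in turn needs a uniform bound on $u^{k,B_{p}}$ on a fixed sphere---again circling back to the bound by $Y$. The paper's route is cleaner: once $u^{k}\leqq Y$, the tail $\int_{\mathbb{R}^{N}\setminus B_{r}}u^{k}(.,t)\,dx$ is controlled by $\int_{\mathbb{R}^{N}\setminus B_{r}}Y(.,t)\,dx\to0$ from \eqref{vive}, and the concentration of the trace measure at $0$ follows from \eqref{dir}. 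With those two inputs, your identification $\mu=k\delta_{0}$ (lower bound from $u^{k,B_{p}}\leqq u^{k}$, upper bound from total mass $\leqq k$) is exactly the paper's argument.
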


\begin{proof}
(i) Let $k>0$ be fixed. Consider again the sequence $\left(  u^{k,B_{p}%
}\right)  $\noindent. We have
\begin{equation}
u^{k,B_{p}}(.,t)\leqq Y(.,t)\leqq C(1+t^{-\frac{1}{q-1}}).\label{mes}%
\end{equation}
from Proposition \ref{L.3.4}. From Theorem \ref{T.2.1} the sequence converges
in $C_{loc}^{2,1}(Q_{\Omega,\infty})$ to a solution $u^{k}$ of equation
(\ref{un}) in $Q_{\mathbb{R}^{N},\infty},$ and $u^{k}\leqq Y,$ thus $u^{k}$
satisfies (\ref{RR}) from (\ref{dir}). Moreover for any $t>0,$ from
(\ref{seg}) and (\ref{limi}),
\[
\int_{B_{p}}u^{k,B_{p}}(.,t)dx\leqq k,\qquad\lim_{t\rightarrow0}\int_{B_{p}%
}u^{k,B_{p}}(.,t)dx=k.
\]
Then from the Fatou Lemma,
\[
\int_{\mathbb{R}^{N}}u^{k}(.,t)dx\leqq k.
\]
In turn from Proposition \ref{dic}, $u^{k}(.,t)$ converges weak$^{\ast}$ to a
Radon measure $\mu$, concentrated at $0,$ then $\mu=k^{\prime}\delta_{0},$
$k^{\prime}>0.$ Otherwise $u^{k,B_{p}}\leqq u^{k},$ then $\int_{B_{p}%
}u^{k,B_{p}}(.,t)dx\leqq\int_{\mathbb{R}^{N}}u^{k}(.,t)dx,$ thus
\[
k\leqq\lim\inf_{t\rightarrow0}\int_{\mathbb{R}^{N}}u^{k}(.,t)dx;
\]
then $\lim_{t\rightarrow0}\int_{\mathbb{R}^{N}}u^{k}(.,t)dx=k.$ Taking
$\varphi_{p}\in\mathcal{D}^{+}(\mathbb{R}^{N}),$ with values in $\left[
0,1\right]  ,$ such that $\varphi_{p}=1$ on $B_{p},$ we get
\[
\int_{B_{p}}u^{k,B_{p}}(.,t)dx\leqq\int_{\mathbb{R}^{N}}u^{k}(.,t)\varphi
_{p}dx\leqq\int_{\mathbb{R}^{N}}u^{k}(.,t)dx
\]
hence $k^{\prime}=k;$ thus $u^{k}(.,t)$ converges weak $^{\ast}$ to
$k\delta_{0}$ as $t\rightarrow0$. In fact the convergence holds in the weak
sense of $\mathcal{M}_{b}(\mathbb{R}^{N}).$ Indeed for any $\psi\in C_{b}%
^{+}(\mathbb{R}^{N})$, using a function $\varphi\in C_{c}(\mathbb{R}^{N})$
with values in $\left[  0,1\right]  $ such that $\varphi\equiv1$ on a ball
$B_{r},$ we can write
\[
\int_{\mathbb{R}^{N}}u^{k}(.,t)\psi dx=\int_{\mathbb{R}^{N}}u^{k}%
(.,t)\psi\varphi dx+\int_{\mathbb{R}^{N}}u^{k}(.,t)\psi(1-\varphi)dx,
\]
and
\[
\int_{\mathbb{R}^{N}}u^{k}(.,t)\psi(1-\varphi)dx\leqq\left\Vert \psi
\right\Vert _{L^{\infty}(\mathbb{R}^{N})}\int_{\mathbb{R}^{N}\backslash B_{r}%
}u^{k}(.,t)dx\leqq\left\Vert \psi\right\Vert _{L^{\infty}(\Omega)}%
\int_{\mathbb{R}^{N}\backslash B_{r}}Y(.,t)dx
\]
and the right hand side tends to 0 from (\ref{vive}). From (\ref{fou}), we
find
\[
\left\Vert \left\vert \nabla u_{\varepsilon}^{k,B_{p}}\right\vert
^{q}\right\Vert _{L^{1}(Q_{B_{p},\infty})}\leqq k\left\Vert \rho_{\varepsilon
}\right\Vert _{L^{1}(B_{p})}=k,
\]
hence $\left\Vert \left\vert \nabla u^{k,B_{p}}\right\vert ^{q}\right\Vert
_{L^{1}(Q_{B_{p},\infty})}\leqq k,$ and finally $\left\Vert \left\vert \nabla
u^{k}\right\vert ^{q}\right\Vert _{L^{1}(Q_{\mathbb{R}^{N},\infty})}\leqq k,$
from the convergence a.e. of the gradients.

(ii) From (\ref{mes}) or from Proposition (\ref{muji}), there holds
\begin{equation}
u^{k}(.,t)\leqq Y(.,t)\leqq C(1+t^{-\frac{1}{q-1}}). \label{mem}%
\end{equation}
From Theorem \ref{T.2.1}, $u^{k}$ converges in $C_{loc}^{2,1}(Q_{\mathbb{R}%
^{N},\infty})$ to a weak solution $U$ of equation (\ref{un}). Then $u^{k}\leqq
U\leqq Y,$ thus $U$ satisfies (\ref{VS}) and (\ref{vive}) as $Y$. Hence $U$ is
a VSS in $Q_{\mathbb{R}^{N},\infty}.\medskip$
\end{proof}

Next we prove the uniqueness of the VSS:$\medskip$

\begin{proof}
[Proof of Theorem \ref{unique}]Let us show that $U$ is minimal VSS. Let $u$ be
any VSS in $Q_{\mathbb{R}^{N},\infty}.$ From Proposition \ref{muji}, and
(\ref{vir}), $u\in C^{2,1}(Q_{\mathbb{R}^{N},\infty})\cap C((0,\infty
);C_{b}^{2}(\mathbb{R}^{N}))$ and $u\leqq Y$. For fixed $k>0$ and $p>1,$ one
constructs a sequence of functions $u_{0,n}^{k}\in\mathcal{D}^{+}%
(\mathbb{R}^{N})$ with support in \ $B_{1}$ such that
\[
u_{0,n}^{k}\leqq u(.,\frac{1}{n})\quad\text{in }\mathbb{R}^{N},\qquad
\lim_{n\rightarrow\infty}\int_{\mathbb{R}^{N}}u_{0,n}^{k}dx=k.
\]
Indeed $\left\Vert u(.,1/n)\right\Vert _{{L^{1}(}\mathbb{R}^{N})}$ tends to
$\infty,$ then, for $n$ large enough, there exists $s_{n,k}>0$ such that
$\left\Vert T_{s_{n,k}}(u)(.,1/n)\right\Vert _{{L^{1}(}\mathbb{R}^{N})}=k.$
And $\varepsilon_{n}=\left\Vert u(.,1/n)\right\Vert _{{L^{1}(}\mathbb{R}%
^{N}\backslash B_{1})}+\left\Vert u(.,1/n)\right\Vert _{{L^{\infty}%
(}\mathbb{R}^{N}\backslash B_{1})}$ tends to $0,$ from (\ref{vive}) and
(\ref{dir}). Then $v_{n}^{k}=(T_{s_{n,k}}(u)(.,1/n)-2\varepsilon_{n})^{+}$ has
a compact support in $B_{1},$ and we can take for $u_{0,n}^{k}$ a suitable
regularization of $v_{n}^{k}$. Let us call $u_{n}^{k,B_{p}}$ the solution of
$(D_{B_{p},\infty})$ with initial data $u_{0,n}^{k}.$ Then we obtain that
$u_{n}^{k,B_{p}}(.,t)\leqq u(.,t+1/n)$ from the comparison principle. As
$n\rightarrow\infty,$ $u_{0,n}^{k}$ converges to $k\delta_{0}$ weakly in
$\mathcal{M}_{b}(B_{p})$, since for any $\psi\in C_{b}^{+}(B_{p}),$ and any
$r\in\left(  0,1\right)  ,$%
\begin{align*}
\left\vert \int_{B_{p}}u_{0,n}^{k}\psi dx-k\psi(0)\right\vert  &  \leqq
\psi(0)\left\vert \int_{B_{p}}(u_{0,n}^{k}-k)dx\right\vert \\
&  +2\left\Vert \psi\right\Vert _{{L^{\infty}(}\mathbb{B}_{p})}\int
_{\mathbb{R}^{N}\backslash B_{r}}u(.,\frac{1}{n})dx+\sup_{B_{r}}\left\vert
\psi-\psi(0)\right\vert \int_{\mathbb{R}^{N}}u_{0,n}^{k}dx.
\end{align*}
Then $u_{n}^{k,B_{p}}$ converges to $u^{k,B_{p}}$ from Proposition \ref{conv},
and $u^{k,B_{p}}\leqq u.$ From Lemma \ref{solk}, we get $u^{k}\leqq u\leqq Y.$
As $k\rightarrow\infty,$ we deduce that $U\leqq u\leqq Y.$ Moreover $U$ is
radial and self-similar, then $U=Y=u$ from \cite{QW}. \medskip
\end{proof}

Finally we describe all the solutions:\medskip

\begin{proof}
[Proof of Theorem \ref{desc}]Let $u$ be any weak solution of (\ref{un}),
(\ref{RR}). Either (\ref{VS}) holds, then $u=Y.$ Or there exists a ball
$B_{r}$ such that $\int_{B_{r}}u(.,t)dx$ stays bounded as $t\rightarrow0.$
Then $u\in L_{loc}^{\infty}{(}\left[  0,T\right)  {;L_{loc}^{1}(}%
\mathbb{R}^{N})),$ from Corollary \ref{sous}. From Proposition \ref{dic},
$u(.,t)$ converges weak$^{\ast}$ to a measure $\mu$ as $t\rightarrow0.$ Then
$\mu$ is concentrated at $0$ from (\ref{RR}), hence the exists $k\geqq0$ such
that $\mu=k\delta_{0},$ and (\ref{ink}) holds as in Lemma \ref{solk}, since
$u\leqq Y$. If $k=0,$ then $u\equiv0$ from Theorem \ref{comp}.\medskip

Next we show the uniqueness, namely that $u=$ $u^{k}$ constructed at Lemma
\ref{solk}. \textit{Here only} we use the gradient estimates obtained by the
Bernstein technique. We have $u\in C((0,\infty);C_{b}^{2}(\mathbb{R}^{N}))$
from Proposition (\ref{muji}), and $u\in L^{\infty}((0,\infty);L^{1}%
(\mathbb{R}^{N}))$ from (\ref{gam}) or (\ref{vive}) thus $u\in C((0,\infty
);L^{1}(\mathbb{R}^{N})).$ From \cite{BeLa99}, \cite{BeBALa}, for any
$\epsilon>0,$ and any $t\geqq\epsilon,$ we have the semi-group formula
\begin{equation}
u(.,t)=e^{(t-\epsilon)\Delta}u(.,\epsilon)-\int_{\epsilon}^{t}e^{(t-s)\Delta
}\left\vert \nabla u\right\vert ^{q}(s)ds\qquad\text{in }L^{1}(\mathbb{R}%
^{N}), \label{true}%
\end{equation}
and there exists $C(q)$ such that for any $t>0,$
\[
\left\vert \nabla u(.,t)\right\vert ^{q}\leqq C(q)(t-\epsilon)^{-1}u(.,t).
\]
Going to the limit as $\epsilon\rightarrow0$ we deduce from (\ref{ploc}),
since $u\leqq Y,$
\[
\left\Vert \nabla u(.,t)\right\Vert _{L^{\infty}(\mathbb{R}^{N})}\leqq
C(q)t^{-1/q}\left\Vert Y(.,t)\right\Vert _{L^{\infty}(\mathbb{R}^{N})}%
^{1/q}\leqq Ct^{-(N+2)/2q}%
\]
where $C=C(N,q).$ From (\ref{ink}) and (\ref{true}) there holds $\left\vert
\nabla u\right\vert ^{q}\in L_{loc}^{1}(\left[  0,\infty\right)
;L^{1}(\mathbb{R}^{N})).$ Otherwise $e^{(t-\epsilon)\Delta}u(x,\epsilon)$
converges to $kg$ in $\mathcal{C}_{b}^{\prime}(\mathbb{R}^{N}),$ where $g$ is
the heat kernel, then
\[
u(.,t)=kg-\int_{0}^{t}e^{(t-s)\Delta}\left\vert \nabla u\right\vert
^{q}(s)ds\qquad\text{in }\mathcal{C}_{b}^{\prime}(\mathbb{R}^{N}).
\]
Then%
\[
(u-u^{k})(.,t)=-\int_{0}^{t}e^{(t-s)\Delta}(\left\vert \nabla u\right\vert
^{q}-\left\vert \nabla u^{k}\right\vert ^{q})(s)ds\qquad\text{in }%
L^{1}(\mathbb{R}^{N}),
\]
\begin{align*}
\left\Vert \nabla(u-u^{k})(.,t)\right\Vert _{L^{q}\left(  \mathbb{R}%
^{N}\right)  }  &  \leqq\int_{0}^{t}\left\Vert e^{(t-s)\Delta}\right\Vert
_{L^{1}\left(  \mathbb{R}^{N}\right)  }\left\Vert \left\vert \nabla
u(.,s)\right\vert ^{q}-\left\vert \nabla u^{k}(.,s)\right\vert ^{q}\right\Vert
_{L^{q}\left(  \mathbb{R}^{N}\right)  }ds\\
&  \leqq C\int_{0}^{t}(t-s)^{-1/2}s^{-(q-1)(N+2)/2q}\left\Vert \nabla
(u-u^{k})(.,s)\right\Vert _{L^{q}\left(  \mathbb{R}^{N}\right)  }ds.
\end{align*}
Thus $\nabla(u-u^{k})(.,t)=0$ in $L^{q}\left(  \mathbb{R}^{N}\right)  ,$ from
the singular Gronwall lemma, valid since $q<\frac{N+2}{N+1};$ hence $u=u^{k}.$
\end{proof}

\begin{remark}
This uniqueness result is a special case of a general one given for measure
data in \cite[Theorem 3.27]{BiDao}.
\end{remark}

\subsection{The Dirichlet problem $(D_{\Omega,\infty})$ \label{end}}

Here $\Omega$ is bounded, and we consider the weak solutions of the problem
$(D_{\Omega,\infty})$ such that
\begin{equation}
\lim_{t\rightarrow0}\int_{\Omega}u(.,t)\varphi dx=0,\qquad\forall\varphi\in
C_{c}(\overline{\Omega}\backslash\left\{  0\right\}  ). \label{Ra}%
\end{equation}

\noindent First, we give regularity properties of these solutions.

\begin{lemma}
\label{dru}Any weak solution $u$ of $(D_{\Omega,\infty}),$ (\ref{Ra}), in
$Q_{\Omega,\infty}$ satisfies
\[
u\in C^{1,0}(\overline{\Omega}\backslash\left\{  0\right\}  \times\left[
0,\infty\right)  )\cap C^{1,0}(\overline{\Omega}\times(0,\infty))\cap
C^{2,1}\left(  Q_{\Omega,\infty}\right)  .
\]

\end{lemma}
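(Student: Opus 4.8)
The plan is to split the statement into three regimes — interior points with $t>0$, interior points as $t\to0$, and boundary points — and in each to reduce to a regularity tool already available in the paper. Throughout write $O:=0$ for the singular point, and note that $\mathrm{dist}(O,\partial\Omega)>0$ since $O\in\Omega$.

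First, the part $u\in C^{2,1}(Q_{\Omega,\infty})\cap C^{1,0}(\overline\Omega\times(0,\infty))$. For any $\epsilon>0$ the restriction of $u$ to $Q_{\Omega,\epsilon,\infty}$ is (after a time shift) a weak solution of a Dirichlet problem $(D_{\Omega,T})$ in the sense of the paper, since $u\in C((0,\infty);L^{1}(\Omega))$ by definition. Hence Remark \ref{regi} (i.e.\ Theorems \ref{T.2.1} and \ref{Dir} and Proposition \ref{conv}) applies: $u\in C^{2,1}(Q_{\Omega,\epsilon,\infty})$ and $u$ satisfies the universal estimates \eqref{wa}, \eqref{univ}. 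In particular, with zero lateral data, $u$ is bounded on $\overline\Omega\times[\epsilon',\infty)$ for every $\epsilon'>\epsilon$, so Theorem \ref{Dir} yields $u\in C^{1,0}(\overline\Omega\times(\epsilon,\infty))$. Letting $\epsilon\to0$ gives the claim. For interior points as $t\to0$: since \eqref{Ra} implies \eqref{R}, Corollary \ref{sous} applies (through Proposition \ref{three} and Theorem \ref{T.2.1}), so the extension $\overline u$ of $u$ by $0$ for $t<0$ is a $C^{2,1}$ solution of \eqref{un} in $Q_{\Omega_{0},-T,T}$, with $u(x,0)=0$ for $x\in\Omega_{0}$. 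This already gives $u\in C^{2,1}$, hence $C^{1,0}$, in a neighbourhood (in $(\Omega\setminus\{O\})\times[0,\infty)$) of every interior point $(x_0,0)$ with $x_0\ne O$.

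The remaining and main point is continuity of $u$ and of $\nabla u$ up to a boundary point $(P,0)$, $P\in\partial\Omega$. The obstruction is that the only a priori control of $u$ near $\partial\Omega$ is $u(x,t)\le C(1+t^{-1/(q-1)})\,d(x,\partial\Omega)$ (Lemma \ref{mono}), which is \emph{not} bounded near $(P,0)$, and no comparison supersolution of \eqref{un} vanishing on $\partial\Omega$ can improve on this without extra input. The extra input is the $L^1$-smallness from \eqref{Ra}: for $\rho>0$ with $O\notin\overline{B(P,2\rho)}$, one has $\int_{\Omega\cap B(P,2\rho)}u(\cdot,s)\,dx\to0$ as $s\to0$. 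The trick I would use to exploit it is to zero-extend $u$ \emph{both} across $\partial\Omega$ and across $\{t=0\}$: let $\widetilde u$ equal $u$ on $\Omega\times(0,\infty)$, $0$ on $(\mathbb R^{N}\setminus\Omega)\times(0,\infty)$, and $0$ for $t\le0$. Then $\widetilde u\ge0$ is \emph{subcaloric} in a neighbourhood of $(P,0)$ in $\mathbb R^{N}\times\mathbb R$: $u$ is subcaloric in $\Omega$; since $u=0$ and $\partial u/\partial\nu\le0$ on $\partial\Omega$, the distributional Laplacian across $\partial\Omega$ picks up only a nonnegative surface measure, so the space-extension is subcaloric; and the time-extension is subcaloric because $u(\cdot,t)\to0$ in $L^{1}$ near $P$, exactly as in Proposition \ref{three}/Theorem \ref{the3.10}(ii). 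The local maximum estimate for nonnegative subcaloric functions (as used at the end of the proof of Theorem \ref{the3.10}) then gives, for $x\in B(P,\rho)$ and small $t>0$,
\[
u(x,t)=\widetilde u(x,t)\le C\rho^{-(N+2)}\int_{\max(0,\,t-\rho^{2})}^{t}\Big(\int_{\Omega\cap B(P,2\rho)}u(\cdot,s)\,dx\Big)\,ds .
\]
Since the inner integral is a continuous function of $s$ vanishing at $s=0$, the right-hand side is bounded on $(B(P,\rho)\cap\overline\Omega)\times(0,\tau)$ and tends to $0$ as $t\to0$, uniformly in $x$; together with continuity for $t>0$ this shows $u$ (extended by $0$) is continuous at $(P,0)$, with value $0$, and bounded near $(P,0)$.

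Finally, with $u$ bounded on $(B(P,\rho)\cap\Omega)\times(0,\tau)$, vanishing on $\partial\Omega$ and as $t\to0$, I would get the gradient continuity up to $(P,0)$ by the bootstrap of Theorem \ref{Dir}. Multiply $\overline u$ by a cutoff $\eta\in C_c^\infty(B(P,\rho))$ with $\eta\equiv1$ near $P$; since $\eta$ annihilates a neighbourhood of $O$, the function $v:=\eta\,\overline u$ is a genuine bounded weak solution on all of $\Omega$ of a linear Dirichlet problem $v_t-\Delta v=g$, $v=0$ on $\partial\Omega$, with $v\equiv0$ for $t<0$ and $g\in L^{q_1}$ ($q_1=2/q$) built from $|\nabla\overline u|^{q}$ and lower-order terms (a local energy estimate as in Lemma \ref{subdir} gives $\nabla\overline u\in L^2_{loc}$ near $P$, using $\overline u\ge0$). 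The Gagliardo–Nirenberg / $L^p$-parabolic iteration of the proofs of Theorems \ref{T.2.1}--\ref{Dir} then raises the integrability of $\nabla\overline u$, hence of $g$, up to Hölder continuity of $\nabla v$ on $\overline\Omega\times[-\tau/2,\tau]$, so $\nabla u$ extends continuously up to $(\partial\Omega\setminus\{O\})\times\{0\}$ near $P$. Combining the three regimes yields $u\in C^{1,0}(\overline\Omega\setminus\{O\}\times[0,\infty))\cap C^{1,0}(\overline\Omega\times(0,\infty))\cap C^{2,1}(Q_{\Omega,\infty})$. I expect the boundedness step near $(P,0)$ to be the main obstacle: the key realization is that the Dirichlet condition makes the zero-extension across $\partial\Omega$ subcaloric, which is precisely what lets the $L^1$-decay coming from \eqref{Ra} be upgraded to an $L^\infty$-decay via the parabolic mean value inequality.
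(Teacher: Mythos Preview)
Your proposal is essentially correct, and the opening two regimes (interior for $t>0$ via Remark~\ref{regi}, interior as $t\to0$ via Corollary~\ref{sous}) coincide with the paper's argument. The difference lies in how you obtain boundedness of $\overline u$ near $(\partial\Omega)\times\{0\}$.

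You argue by extending $u$ by zero across $\partial\Omega$ \emph{and} across $\{t=0\}$, checking that this extension is a nonnegative distributional subcaloric function (using $u=0$, $\partial_\nu u\le 0$ on $\partial\Omega$ for $t>0$, and the $L^1$-vanishing from \eqref{Ra}), and then invoking the local mean value inequality. This works, but the justification of subcaloricity across $\partial\Omega$ is somewhat delicate (one must control the boundary term $\int_\epsilon^\tau\int_{\partial\Omega}\varphi\,\partial_\nu u$ as $\epsilon\to0$, which is fine only because both remaining terms after integration by parts have a sign).

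The paper takes a shorter route that never leaves $\Omega$. Having already established, via Corollary~\ref{sous}, that $u\in C^{2,1}$ on $\Omega_0\times[0,\infty)$ with $u(\cdot,0)=0$, one fixes $B_\eta\subset\subset\Omega$ and reads off $u(\cdot,t)\le C_T t$ on $\partial B_\eta\times[0,T]$. Setting $w=u-C_T t$, one has $w_t-\Delta w=-|\nabla u|^q-C_T\le0$ in $\Omega_\eta=\Omega\setminus\overline{B_\eta}$; Kato's inequality gives $w^+$ subcaloric with zero Dirichlet data on \emph{all} of $\partial\Omega_\eta$, and $w^+(\cdot,t)\to0$ in $\mathcal M_b(\Omega_\eta)$ by \eqref{Ra}. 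The comparison lemma of Baras--Pierre then forces $w^+\equiv0$, i.e.\ $u\le C_T t$ throughout $\Omega_\eta$. This yields at once that $\overline u$ is bounded on $Q_{\Omega_\eta,-\tau,\tau}$, and Theorem~\ref{Dir} (applied locally near $\partial\Omega$, where the Dirichlet condition does hold) upgrades this to $C^{1,0}$ regularity.

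What each approach buys: the paper exploits the already-known \emph{interior} $C^{2,1}$ regularity to produce a concrete barrier $C_T t$ and reduce everything to a clean Kato/comparison step on $\Omega_\eta$ with genuine zero boundary data; your route is more self-contained (it does not use the interior $C^1$-in-time information) but requires justifying the two-sided zero extension and then a cutoff-plus-bootstrap for the gradient. Both lead to the same final invocation of the bootstrap from Theorem~\ref{Dir}.
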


\begin{proof}
We know that $u\in C^{1,0}(\overline{\Omega}\times(0,\infty))\cap
C^{2,1}\left(  Q_{\Omega,\infty}\right)  ,$ see Remark \ref{regi}. Moreover
$u\in C^{2,1}(\Omega_{0}\times\left[  0,\infty\right)  )$ and
$u(x,0)=0,\;\forall x\in\Omega_{0},$ from Corollary \ref{sous}. Let $B_{\eta
}\subset\subset\Omega$ be fixed, and $\Omega_{\eta}=\Omega\backslash
\overline{B_{\eta}}.$ Then $u\in C^{1}\left(  \partial B_{\eta}\times\left[
0,\infty\right)  \right)  ,$ thus for any $T\in\left(  0,\infty\right)  ,$
there exists $C_{\tau}>0$ such that $u(.,t)\leqq C_{\tau}t$ on $\partial
B_{\eta}\times\left[  0,T\right)  .$ Then the function $w=u-C_{\tau}t$ solves
\[
w_{t}-\Delta w=-\left\vert \nabla u\right\vert ^{q}-C_{\tau}\quad\quad\text{in
}\mathcal{D}^{\prime}\left(  Q_{\Omega_{\eta},T}\right)  ,
\]
then $w^{+}\in C((0,T);L^{1}\left(  \Omega_{\eta}\right)  \cap L_{loc}%
^{1}((0,T);W_{0}^{1,1}\left(  \Omega_{\eta}\right)  ),$ and
\[
w_{t}^{+}-\Delta w^{+}\leqq0\quad\quad\text{in }\mathcal{D}^{\prime}\left(
Q_{\Omega_{\eta},T}\right)
\]
from the Kato inequality. Moreover, from assumption (\ref{Ra}), $w^{+}\in
L^{\infty}((0,T);L^{1}\left(  \Omega_{\eta}\right)  )$ and $w^{+}(.,t)$
converges to $0$ in the weak sense of $\mathcal{M}_{b}\left(  \Omega_{\eta
}\right)  .$ As a consequence, $w\leqq0,$ from \cite[Lemma 3.4]{BaPi}; thus
$u(.,t)\leqq C_{T}t$ in $\Omega_{\eta,T}.$ Then the function $\overline{u}$
defined by (\ref{uba}) is bounded in $Q_{\Omega_{\eta},\tau}.$ Hence
$\overline{u}\in C^{1,0}(\overline{\Omega_{\eta}}\times(-T,T))$ from Theorem
\ref{Dir}, thus $u\in C^{1,0}(\overline{\Omega}\backslash\left\{  0\right\}
\times\left[  0,\infty\right)  ).$
\end{proof}

\begin{definition}
Let $T\in\left(  0,\infty\right]  .$ We call VSS in $Q_{\Omega,T}$ any weak
solution $u$ of the Dirichlet problem $(D_{\Omega,T})$, (\ref{Ra}), such that
\begin{equation}
\lim_{t\rightarrow0}\int_{B_{r}}u(.,t)dx=\infty,\qquad\forall B_{r}%
\subset\Omega. \label{Sb}%
\end{equation}

\end{definition}

\begin{remark}
From Remark \ref{regi}, any VSS in $Q_{\Omega,T}$ extends as a VSS in
$Q_{\Omega,\infty},$ and satisfies (\ref{wa}) and (\ref{univ}).
\end{remark}

Next we prove the existence and uniqueness of the VSS. Our proof is based on
the uniqueness of the VSS in $\mathbb{R}^{N},$ and does not use the uniqueness
of the function $u^{k}.\medskip$

\begin{proof}
[Proof of Theorem \ref{vss}]\textit{(i) Existence of a minimal VSS. }For any
$k>0$ we consider the solution $u^{k,\Omega}$ of $(D_{\Omega,\infty})$ with
initial data $k\delta_{0}$. By regularization as in Lemma \ref{solk}, we
obtain that $u^{k,\Omega}\leqq Y.$ The sequence $\left(  u^{k,\Omega}\right)
$ is nondecreasing. From estimate (\ref{wa}) and Theorem \ref{Dir}\noindent,
$\left(  u^{k,\Omega}\right)  $ converges in $C_{loc}^{2,1}(Q_{\Omega,\infty
})\cap C_{loc}^{1,0}(\overline{\Omega}\times(0,\infty))$ to a weak solution
$U^{\Omega}$ of $(D_{\Omega,\infty})$, and then $U^{\Omega}\leqq Y.$ Hence
$U^{\Omega}$ satisfies (\ref{Sb}), and (\ref{Ra}) from (\ref{vive}), thus
$U^{\Omega}$ is a VSS in $\Omega$. Next we show that $U^{\Omega}$ is minimal.
Consider any VSS $u$ in $Q_{\Omega,\infty}.$ Let $k>0$ be fixed. As in the
proof of Theorem \ref{unique}, one constructs a sequence $u_{n}^{k,\Omega}$of
solutions of $(D_{\Omega,\infty})$ with initial data functions $u_{0,n}%
^{k,\Omega}\in\mathcal{D}(\Omega)$ such that
\[
0\leqq u_{0,n}^{k,\Omega}\leqq u(.,\frac{1}{n})\quad\text{in }\Omega
,\qquad\lim_{n\rightarrow\infty}\int_{\Omega}u_{0,n}^{k,\Omega}dx=k.
\]
We still find $u_{n,p}^{k}(.,t)\leqq u(.,t+1/n)$ from the comparison
principle, valid from Lemma \ref{dru}. As $n\rightarrow\infty,$ $u_{0,n}%
^{k,\Omega}$ converges to $k\delta_{0}$ weakly in $\mathcal{M}_{b}(\Omega)$,
then $u_{n}^{k,\Omega}$ converges to $u^{k,\Omega}$ from Proposition
\ref{conv}. Then $u^{k,\Omega}\leqq u$ for any $k>0,$ thus $U^{\Omega}\leqq
u.$\medskip

\textit{(ii) Existence of a maximal VSS. }For any ball $B_{\eta}\subset
\subset\Omega$, we consider the function $Y_{\eta}^{\Omega}$ defined at
Theorem \ref{L.3.3}. Consider again any VSS $u$ in $\Omega,$ and follow the
proof of Proposition \ref{muji}, replacing $B_{r}$ by $\Omega.$ Let
$\varepsilon>0$ be fixed. From Lemma \ref{dru}, for any ball $B_{\eta}%
\subset\subset\Omega,$ setting $\Omega_{\eta}=\Omega\backslash\overline
{B_{\eta}}$ there is $\delta_{\eta}>0$ such that
\begin{equation}
u(x,t)<\varepsilon,\qquad\text{in }Q_{\Omega_{\eta},\delta_{\eta}}\text{ }%
\end{equation}
Next, for any $\delta\in(0,\delta_{\eta})$, from the comparison principle in
$Q_{\Omega,\delta,\tau}$ we deduce that
\[
u(x,t)\leqq Y_{2\eta}^{\Omega}(x,t-\delta)+\varepsilon\qquad\text{in
}Q_{\Omega,\delta,\tau}.
\]
As $\delta$ tends to $0$, and then $\varepsilon\rightarrow0$, we deduce that
$u\leqq Y_{2\eta}^{\Omega}$ in $Q_{\Omega,\infty}.$ We observe that $Y_{\eta
}^{\Omega}\leqq Y_{\eta^{\prime}}^{\Omega}$ for any $\eta\leqq\eta^{\prime}.$
From the estimates (\ref{wa}) and Theorem \ref{T.2.1}, $Y_{\eta}^{\Omega}$
converges in $C_{loc}^{1,0}(\overline{\Omega}\times(0,\infty))$ to a classical
solution $Y^{\Omega}$ of $(D_{\Omega,\infty}),$ and $u\leqq Y^{\Omega}$.
Moreover $Y^{\Omega}$ satisfies (\ref{Sb}), since $Y^{\Omega}\geqq U,$ and
(\ref{Ra}) since $Y^{\Omega}\leqq Y,$ then $Y^{\Omega}$ is a maximal VSS in
$\Omega.$\medskip

\textit{(iii) Uniqueness. }For fixed $k>0,$ we intend to compare $u^{k,\Omega
}$ with $u^{k},$ by approximation. Let $0<\eta<r$ be fixed such that
$B_{r}\subset\subset\Omega.$ Consider again the function $Y_{\eta}$ defined by
(\ref{py}). Let $\delta>0$ be fixed. From (\ref{fil}), there exists
$\tau_{\delta}>0$ such that $\sup_{(\mathbb{R}^{N}\backslash B_{r}%
)\times\left[  0,\tau_{\delta}\right]  }Y_{\eta}\leqq\delta.$ Let $\left(
\rho_{\varepsilon}\right)  $ be a sequence of mollifiers with support in
$B_{\varepsilon}\subset B_{\eta}$. Let $u_{\varepsilon}^{k,\Omega}$ be the
solution of $(D_{\Omega,\infty})$ in $Q_{\Omega,\infty}$ with initial data
$k\rho_{\varepsilon}.$ For any $p>1$ such that $\Omega\subset B_{p},$ let
$u_{\varepsilon}^{k,B_{p}}$ be the solution of $(D_{B_{p},\infty})$ with the
same initial data. By definition of $Y_{\eta}^{B_{p}}$ and $Y_{\eta},$ there
holds $u_{\varepsilon}^{k,B_{p}}\leqq Y_{\eta}^{B_{p}}\leqq Y_{\eta},$ hence
$\sup_{\partial\Omega\times\left[  0,\tau_{\delta}\right]  }$ $u_{\varepsilon
}^{k,B_{p}}\leqq\delta.$ Applying the comparison principle to the smooth
functions $u_{\varepsilon}^{k,\Omega}$ and $u_{\varepsilon}^{k,B_{p}}$ in
$\overline{\Omega}\times\left[  0,\infty\right)  ,$ we obtain that
\[
u_{\varepsilon}^{k,B_{p}}\leqq u_{\varepsilon}^{k,\Omega}+\delta\qquad\text{in
}\overline{\Omega}\times\left[  0,\tau_{\delta}\right]  .
\]
Going to the limit as $\varepsilon\rightarrow0$ from Proposition \ref{conv}
and then as $p\rightarrow\infty$ from Lemma \ref{solk}, we obtain that
\[
u^{k}\leqq u^{k,\Omega}+\delta\qquad\text{in }\overline{\Omega}\times\left(
0,\tau_{\delta}\right]  ;
\]
and going to the limit as $k\rightarrow\infty,$ we find
\[
U\leqq U^{\Omega}+\delta\qquad\text{in }\overline{\Omega}\times\left(
0,\tau_{\delta}\right]  .
\]
The function $W^{\Omega}=Y^{\Omega}-U^{\Omega}\in C^{1,0}(\overline{\Omega
}\backslash\left\{  0\right\}  \times\left[  0,\infty\right)  )\cap
C^{1,0}(\overline{\Omega}\times(0,\infty))$ from Lemma (\ref{dru}), and
$W^{\Omega}=0$ on $\partial\Omega\times\left[  0,\infty\right)  $. Since
$Y^{\Omega}\leqq Y=U,$ then $W^{\Omega}\leqq\delta$ in $\overline{\Omega
}\times\left(  0,\tau_{\delta}\right]  .$ Thus $W^{\Omega}(.,t)$ converges
uniformly to $0$ as $t\rightarrow0.$ Then for any $\varepsilon>0,$ $W^{\Omega
}-\varepsilon$ cannot have an extremal point in $Q_{\Omega,\infty},$ thus
$W^{\Omega}\leqq\varepsilon,$ hence $Y^{\Omega}=U^{\Omega}.\medskip$
\end{proof}

Finally we describe all the solutions as in the case of $\mathbb{R}^{N}:$

\begin{theorem}
Let $u$ be any weak solution of $(D_{\Omega,\infty}),$ (\ref{Ra}). Then either
$u=U^{\Omega}$, or there exists $k>0$ such that $u=u^{k,\Omega},$ or
$u\equiv0.$
\end{theorem}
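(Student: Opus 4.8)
The plan is to follow the same trichotomy argument used in the proof of Theorem \ref{desc} for $\mathbb{R}^{N}$, adapting each step to the Dirichlet setting. Let $u$ be any weak solution of $(D_{\Omega,\infty})$ satisfying (\ref{Ra}). The first dichotomy is whether (\ref{Sb}) holds. If it does, then $u$ is a VSS in $Q_{\Omega,\infty}$ and by Theorem \ref{vss} (uniqueness of the VSS) we conclude $u=U^{\Omega}$, since $U^{\Omega}$ is the VSS constructed there. If (\ref{Sb}) fails, then there exists a ball $B_{r}\subset\subset\Omega$ such that $\int_{B_{r}}u(.,t)dx$ stays bounded as $t\rightarrow0$; combined with (\ref{Ra}) and Corollary \ref{sous}, this gives $u\in L_{loc}^{\infty}([0,T);L_{loc}^{1}(\Omega))$, so by Proposition \ref{dic} the trace $u(.,t)$ converges weak$^{\ast}$ to a nonnegative Radon measure $\mu$ on $\Omega$, and by (\ref{Ra}) this measure is concentrated at $0$, hence $\mu=k\delta_{0}$ for some $k\geqq0$.

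If $k=0$, then $u(.,t)$ converges weak$^{\ast}$ to $0$; I would argue that $u\equiv0$ by noting that $u$ extends (after the extension $\overline{u}$ of Proposition \ref{three}, using (\ref{bra}) which now holds since $\mu=0$) to a solution on $Q_{\Omega,-T,T}$, and in fact by Corollary \ref{sous} we have $u\in C^{2,1}(\Omega\times[0,T))$ with $u(x,0)=0$ for all $x\in\Omega$; then since $u$ solves the Dirichlet problem with zero initial and zero lateral data, uniqueness for $(D_{\Omega,T})$ (Theorem \ref{BeDa}, or the uniqueness argument in Proposition \ref{conv}) forces $u\equiv0$. If $k>0$, I would upgrade the weak$^{\ast}$ convergence to convergence in the weak sense of $\mathcal{M}_{b}(\Omega)$, i.e. (\ref{cb}) with $u_{0}=k\delta_{0}$; this uses that $u\leqq Y^{\Omega}\leqq Y$ (from the maximality in Theorem \ref{vss}) together with (\ref{vive}), exactly as in the proof of Lemma \ref{solk}(i), to control the mass near $\partial\Omega$. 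Once (\ref{cb}) holds with $u_{0}=k\delta_{0}$, the uniqueness part of Theorem \ref{BeDa} (equivalently Proposition \ref{conv}) identifies $u=u^{k,\Omega}$.

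The main obstacle I anticipate is the case $k>0$: establishing that the weak$^{\ast}$ convergence of $u(.,t)$ to $k\delta_{0}$ on $C_{c}(\Omega)$ actually holds in the stronger sense (\ref{cb}) tested against all $\varphi\in C_{b}(\Omega)$, which is precisely what is needed to invoke the uniqueness theorem. This requires showing no mass escapes to $\partial\Omega$ as $t\rightarrow0$, for which the key input is the comparison $u\leqq Y^{\Omega}\leqq Y$ and the decay (\ref{vive}) of the $\mathbb{R}^{N}$-VSS away from the origin; the splitting $\int_{\Omega}u(.,t)\psi\,dx=\int_{\Omega}u(.,t)\psi\varphi\,dx+\int_{\Omega}u(.,t)\psi(1-\varphi)\,dx$ with $\varphi\equiv1$ on a small ball then controls the boundary contribution by $\|\psi\|_{L^{\infty}}\int_{\Omega\setminus B_{r}}Y(.,t)dx\to0$. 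Everything else is a routine reassembly of Proposition \ref{dic}, Corollary \ref{sous}, Theorem \ref{vss}, and Theorem \ref{BeDa}.
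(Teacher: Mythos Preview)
Your proposal is correct and follows essentially the same trichotomy as the paper's own proof: split according to whether (\ref{Sb}) holds, invoke Theorem~\ref{vss} in the VSS case, and otherwise use Proposition~\ref{dic} to obtain a trace $k\delta_{0}$, then appeal to the uniqueness in Proposition~\ref{conv}/Theorem~\ref{BeDa}. Two small remarks. First, for the upgrade to test functions $\varphi\in C_{b}(\Omega)$, the paper simply says ``from (\ref{Ra})'' (since (\ref{Ra}) is stated for $\varphi\in C_{c}(\overline{\Omega}\setminus\{0\})$, a cutoff near $0$ already handles the boundary contribution), whereas you route through $u\leqq Y^{\Omega}\leqq Y$ and (\ref{vive}); your route is fine, but note that the bound $u\leqq Y^{\Omega}$ comes from the comparison argument in part~(ii) of the proof of Theorem~\ref{vss}, which in fact applies to any weak solution of $(D_{\Omega,\infty})$, (\ref{Ra}) via Lemma~\ref{dru}, not only to VSS. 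Second, for $k=0$ the paper cites Theorem~\ref{comp}; your alternative via Corollary~\ref{sous} and the uniqueness for $(D_{\Omega,T})$ with zero data is equally valid and arguably cleaner in the bounded-domain setting.
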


\begin{proof}
Either $u=Y^{\Omega}$, or there exists a ball $B_{r}$ such that $\int_{B_{r}%
}u(.,t)dx$ stays bounded as $t\rightarrow0.$ Then from (\ref{Ra}), $u\in
L_{loc}^{\infty}{(}\left[  0,\infty\right)  {;L^{1}(\Omega}))$. From
Proposition \ref{dic}, $u(.,t)$ converges weak$^{\ast}$ to a measure $\mu$ as
$t\rightarrow0,$ concentrated at $\left\{  0\right\}  $ from (\ref{Ra}). Hence
the exists $k\geqq0$ such that $\mu=k\delta_{0},$ thus
\[
\lim_{t\rightarrow0}\int_{\Omega}u(.,t)\varphi dx=k\varphi(.,0),\qquad
\forall\varphi\in C_{c}(\Omega),
\]
and it holds for any $\varphi\in C_{b}(\Omega),$ from (\ref{Ra}). If $k>0,$
then $u=u^{k,\Omega}$ from uniqueness, see Proposition \ref{conv}. If $k=0,$
then $u\equiv0$ from Theorem \ref{comp}.
\end{proof}

\end{document}